\documentclass[3p]{elsarticle}
\bibliographystyle{amsplain}

\makeatletter
\def\ps@pprintTitle{%
   \let\@oddhead\@empty
   \let\@evenhead\@empty
   \let\@oddfoot\@empty
   \let\@evenfoot\@oddfoot
}
\makeatother

\usepackage{amsmath,amsfonts,amsthm,amssymb,eucal,mathtools} 
\DeclareMathAlphabet{\bbol}{U}{bbold}{m}{n}
\usepackage{upgreek} 
\usepackage{graphicx,subfig,tikz} 
\usetikzlibrary{shadings,matrix}
\usetikzlibrary{decorations.markings}
\tikzset{->-/.style={decoration={markings,mark=at position .6 with {\arrow[scale=1.5]{stealth}}},postaction={decorate}}}
\usepackage{color} 
\usepackage{appendix} 

\usepackage{fp}
\newsavebox\foobox
\newcommand\slbox[2]{%
  \FPdiv{\result}{#1}{57.296}
  \FPtan{\result}{\result}%
  \slantbox[\result]{#2}%
}%
\newcommand{\slantbox}[2][30]{%
        \mbox{%
        \sbox{\foobox}{#2}%
        \hskip\wd\foobox
        \pdfsave
        \pdfsetmatrix{1 0 #1 1}%
        \llap{\usebox{\foobox}}%
        \pdfrestore
}}
\newcommand\rotslant[3]{\rotatebox{#1}{\slbox{#2}{#3}}}

\newtheorem{thm}{Theorem}[section]
\newtheorem{prop}[thm]{Proposition}

\newtheorem{oss}[thm]{Remark}

\newtheorem{cor}[thm]{Corollary}

\usepackage[norelsize,ruled,vlined,titlenumbered]{algorithm2e}
\setlength{\algoheightrule}{0pt}
\setlength{\algotitleheightrule}{0pt}
\SetKwFor{For}{for}{}{end for}


\newcommand{\id}{\text{id}}
\DeclareMathOperator{\grad}{grad}
\DeclareMathOperator{\curl}{curl}
\let\div\relax
\DeclareMathOperator{\div}{div}
\let\id\relax
\DeclareMathOperator{\id}{id}
\DeclareMathOperator{\rk}{rk}
\newcommand{\dR}{\text{dR}}
\newcommand{\cH}{\mathcal{H}}
\newcommand{\im}{\text{im}}
\newcommand{\RR}{\mathbb{R}}
\newcommand{\SSS}{\mathbb{S}}
\newcommand{\spn}{\text{span}\,}
\newcommand{\per}[1]{\mathring{#1}}

\newcommand{\cF}{\mathcal{F}}
\newcommand{\cM}{\mathcal{M}}
\newcommand{\cR}{\mathcal{R}}

\newcommand{\acpatch}{\scalebox{1.5}{$\pmb{\circlearrowleft}$}}
\newcommand{\cpatch}{\scalebox{1.5}{$\pmb{\circlearrowright}$}}
\newcommand{\gS}{\mathfrak{S}}
\newcommand{\R}{\Rightarrow}
\newcommand{\cA}{\mathcal{A}}
\newcommand{\m}[1]{\mathbf{#1}}
\renewcommand{\phi}{\varphi}
\newcommand{\defeq}{\coloneqq}

\begin{document}

\begin{frontmatter}
\title{Isogeometric de Rham complex discretization\\in solid toroidal domains}

\author[unifi]{Francesco Patrizi\corref{cor1}}
\ead{francesco.patrizi@unifi.it}
\author[tudelft]{Deepesh Toshniwal}
\ead{d.toshniwal@tudelft.nl}
\address[unifi]{Department of Mathematics and Computer Science ``Ulisse Dini'',\\ University of Florence, Viale Giovanni Battista Morgagni 67/a, 50134 Florence, Italy}
\address[tudelft]{Delft Institute of Applied Mathematics, Delft University of Technology, Delft, The Netherlands}
\cortext[cor1]{Corresponding Author}

\begin{abstract}
In this work we define a spline complex preserving the cohomological structure of the continuous de Rham complex when the underlying physical domain is a toroidal solid. In the spirit of the \emph{isogeometric analysis}, the spaces involved will be defined as pushforward of suitable spline spaces on a parametric domain.
The singularity of the parametrization of the solid will demand the imposition of smoothness constraints on the full tensor product spline spaces in the parametric domain to properly set up the discrete complex on the physical domain. 
\end{abstract}

\begin{keyword}
singularly parametrized domains \sep polar splines \sep numerical methods for electromagnetism.
\end{keyword}
\end{frontmatter}

\section{Introduction}
Isogeometric Analysis (IgA), introduced in \cite{IgA}, is a technique to perform numerical simulations on complicated geometries. As opposed to the Finite Elements Method (FEM), where the domain is approximated, usually by simplices, to allow the construction of numerical solutions in spaces of piecewise polynomials, the IgA approach gives priority to the geometric description of the problem. The numerical solution is then constituted by means of the functions used for the domain modeling. Nowadays, challenging geometries are usually expressed in terms of Computer Aided Design (CAD) shape functions, as B-splines, Non-Uniform Rational B-Splines (NURBS) and their generalization to address adaptive refineability \cite{HBsplines,Tsplines,PHTsplines,THBsplines,beirao2013,paperLR,PBsplines}.

The IgA approach has several advantages over FEM. Firstly, the geometry of the problem is exactly represented regardless of the fineness of the discretization. Secondly, while the numerical solution in FEM has $C^0$ global continuity, in IgA the global smoothness of the approximation can be controlled by choosing meshline multiplicities and degrees. As a consequence, IgA improves the stability of the approximation (fewer nonphysical oscillations \cite{IgA,oscillation}) and often it reaches a required accuracy using a much smaller number of degrees of freedom ($\sim 90\%$ less, \cite{grossmann,espen}).
Furthermore, the existing bottleneck in engineering is the creation of analysis suitable models from geometric models in CAD. This involves many steps that lead to the final domain approximation for FEM. This conversion is estimated to consume more than 80\% of the overall analysis time. By adopting the IgA approach, all these conversion steps are not needed anymore and the model development is much faster and simplified.

In this paper we describe an IgA discretization for electromagnetics. For such physical phenomena, the reproduction of essential topological and homological structures at the discrete level is required to ensure accuracy and stability of the numerical method (see, e.g., \cite{Arnold1,Arnold2,Boffi,Hiptmair}). More precisely, we are interested in preserving the cohomological structure of the following de Rham complex:
\begin{equation}\label{continuousderham}
0 \xrightarrow{\id} H^1(\Omega^{pol}) \xrightarrow{\grad} H(\curl;\Omega^{pol}) \xrightarrow{\curl} H(\div;\Omega^{pol}) \xrightarrow{\div} L^2(\Omega^{pol}) \xrightarrow{0} 0,
\end{equation}
where $$
\begin{array}{l}
H(\curl;\Omega^{pol}) = \{ \pmb{f} \in (L^2(\Omega^{pol}))^3 ~|~ \curl\pmb{f}\in (L^2(\Omega^{pol}))^3\},\\\\
H(\div; \Omega^{pol}) = \{\pmb{f} \in (L^2(\Omega^{pol}))^3 ~|~ \div\pmb{f} \in L^2(\Omega^{pol})\},
\end{array}
$$
and $\Omega^{pol}$ is a solid toroidal domain parametrized by a polar map. Such a map collapses a face of the parametric domain $\Omega$ to a closed polar curve running around the handle of the toroidal domain. 
It is well known, see e.g. \cite{hatcher}, that the dimension of the $k$th cohomology $\cH_{\dR}^k$ of the de Rham complex \eqref{continuousderham} is equal to the $k$th Betti's number $b_k$ of the domain. In our case, we have: \begin{itemize}
\item $\dim \cH_{\dR}^0=\dim \ker (\grad) = b_0 = 1$, the number of connected components forming $\Omega^{pol}$,
\item $\dim \cH_{\dR}^1= \dim (\ker(\curl)/\im(\grad)) = b_1= 1$, the number of handles in $\Omega^{pol}$,
\item $\dim \cH_{\dR}^2=\dim (\ker(\div) /\im(\curl)) =b_2 = 0$, the number of cavities enclosed in $\Omega^{pol}$,
\item $\dim \cH_{\dR}^3 = \dim (L^2(\Omega^{pol})/\im(\div)) = b_3 = 0$, by definition for any 3D domain. 
\end{itemize}
The purpose of this work is to discretize the de Rham complex \eqref{continuousderham} in a spline complex on $\Omega^{pol}$ preserving the above cohomology dimensions. 
The spline spaces involved in such discretization will be obtained by pushforward operators from spline spaces on the parametric domain $\Omega$. The singularity introduced by the polar map makes the standard tensor product spline spaces not suitable for this operation. In fact, the pushforward of the B-spline basis functions would be multivalued at the polar curve. As a consequence, the obtained discrete spaces on $\Omega^{pol}$ would not be subspaces of the continuous counterparts in the de Rham complex \eqref{continuousderham}. For this reason, we will define extraction operators leading to restricted spaces, in the tensor product spline spaces on $\Omega$, for which the pushforward operators can be applied to obtain appropriate approximant spaces of $H^1(\Omega^{pol}), H(\curl;\Omega^{pol}),H(\div;\Omega^{pol})$ and $L^2(\Omega^{pol})$ forming, moreover, a spline complex preserving the cohomological structure of \eqref{continuousderham}.

We are interested in toroidal domains because these geometries are of particular relevance in the problem of controlled thermonuclear reactions, especially in fusion reactors based on magnetic confinement. In these devices, plasma is confined by the magnetic fields created by a fixed set of external current-carrying conductors located at the boundary walls. In order to prevent dramatic losses of energy and plasma density, the magnetic field lines should not intersect such conductors or even the vacuum chamber located between plasma and material walls. The study of ideal magnetic hydrodynamics equilibria leads to relatively complicated toroidal geometries, such as tokamaks and stellarators, see Figure \ref{tokamakstellaratorelmo}, which can be employed to confine and isolate plasma from material walls and keep it stable for relatively large values of the ratio energy produced/energy used. Further details can be found, e.g., in the books \cite{freidberg} and \cite{hazeltine}. 
\begin{figure}
\centering
\subfloat[tokamak and a cross-section]{
\includegraphics[trim=270 180 300 0, clip, width=.28\textwidth]{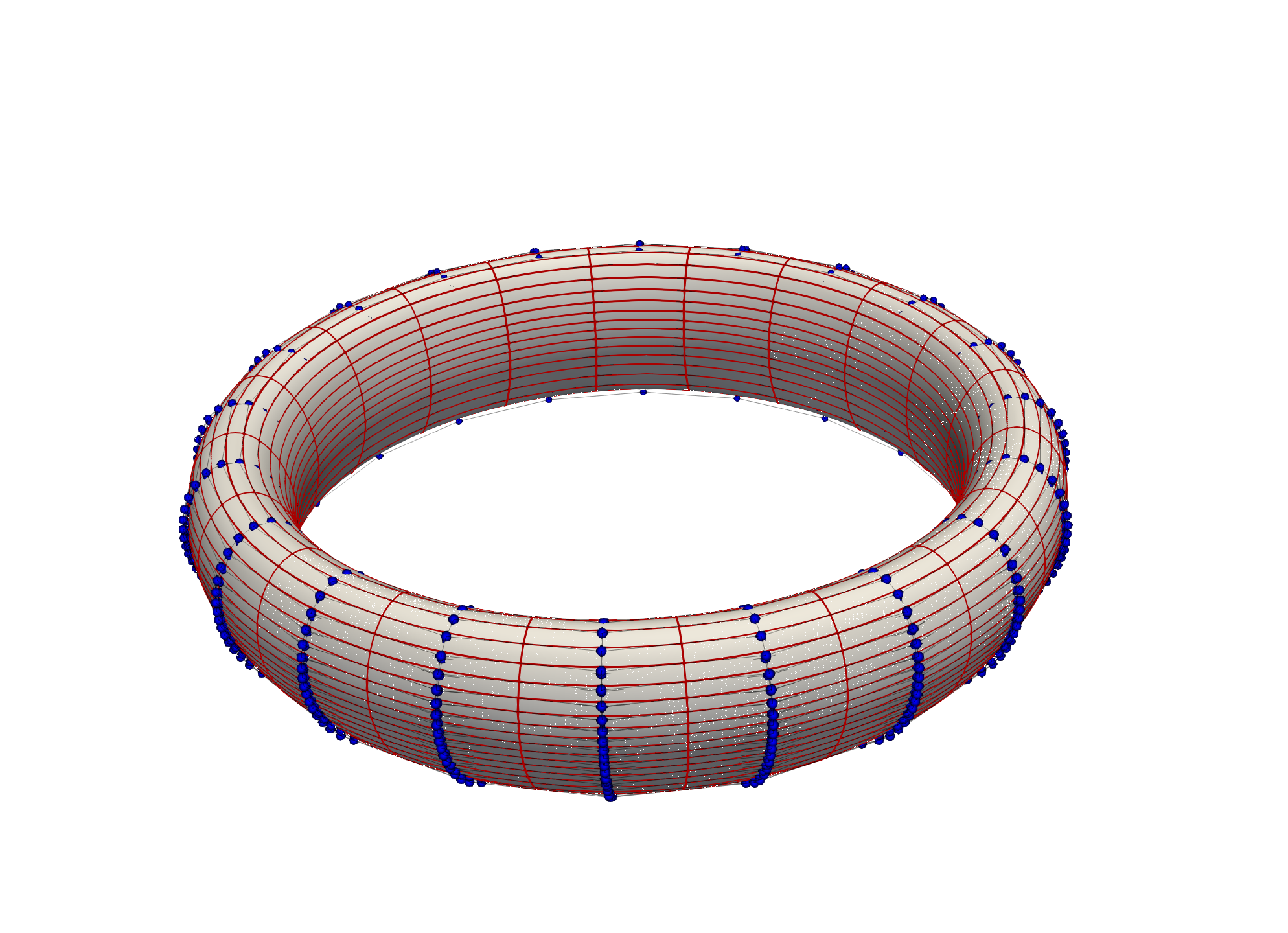}
\includegraphics[trim=500 0 500 0, clip, width=.2\textwidth]{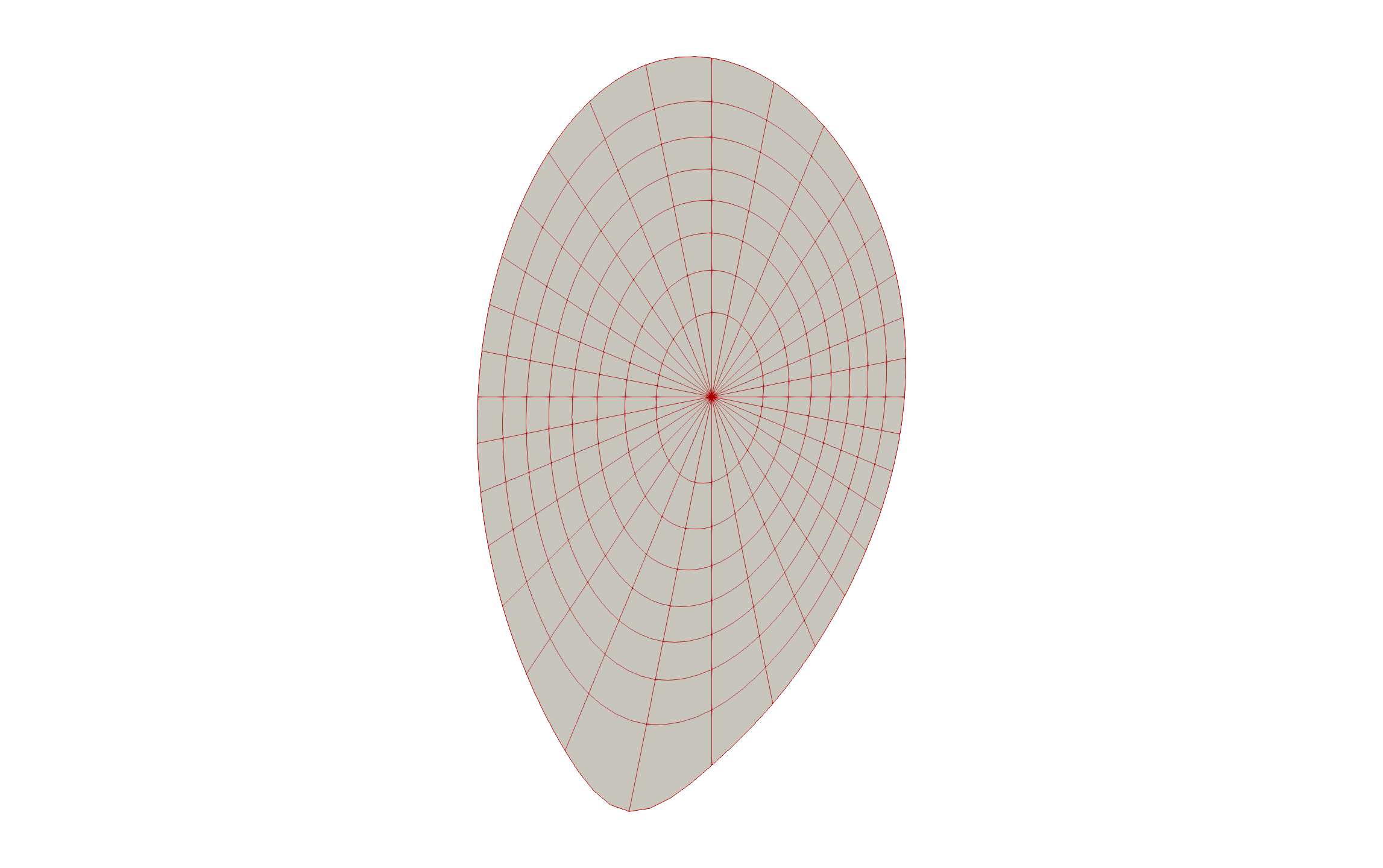}
}
\subfloat[stellarator and a cross-section]{
\includegraphics[trim=300 180 270 0, clip, width=.28\textwidth]{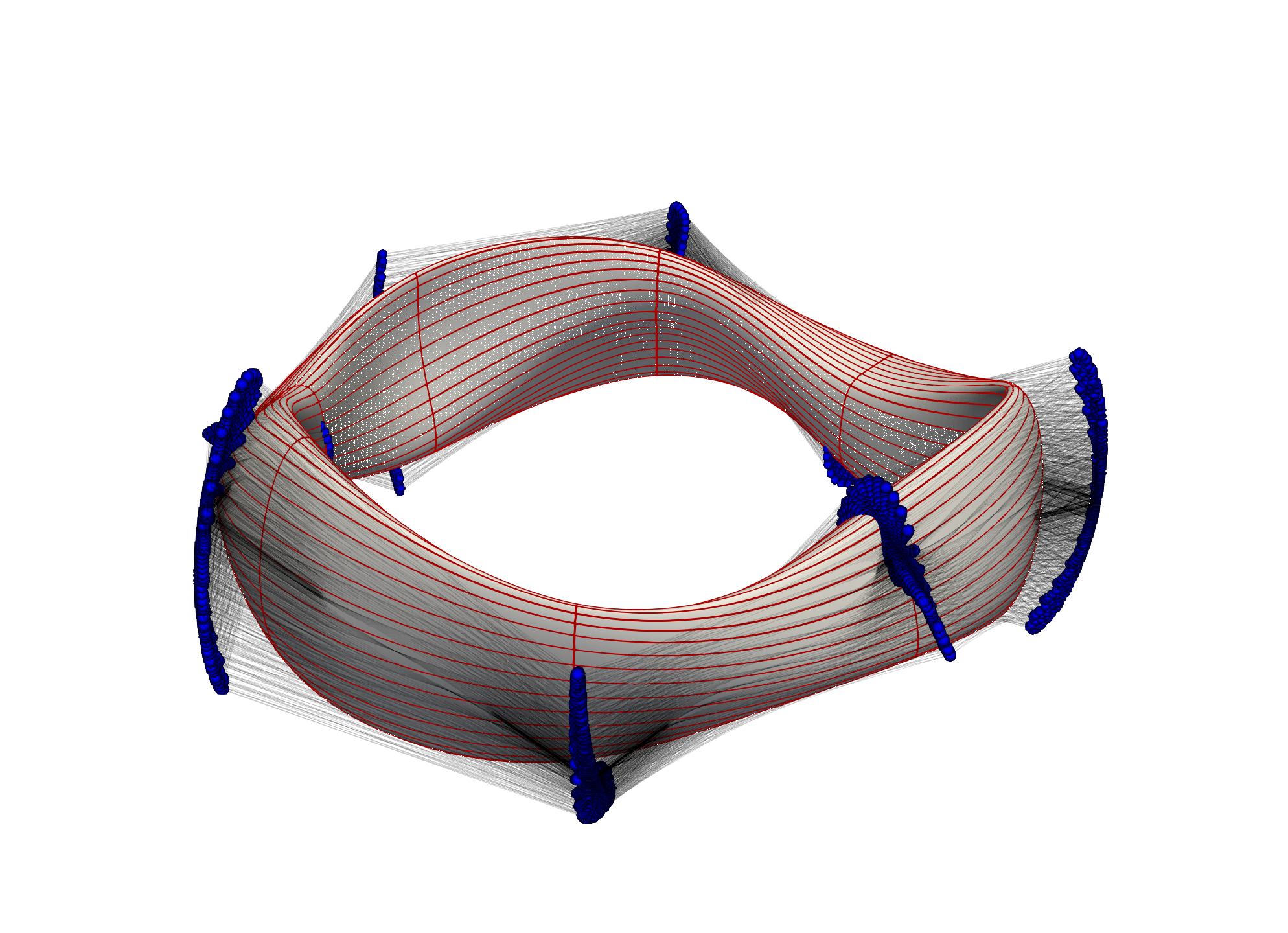}
\includegraphics[trim=600 0 600 0, clip, width=.2\textwidth]{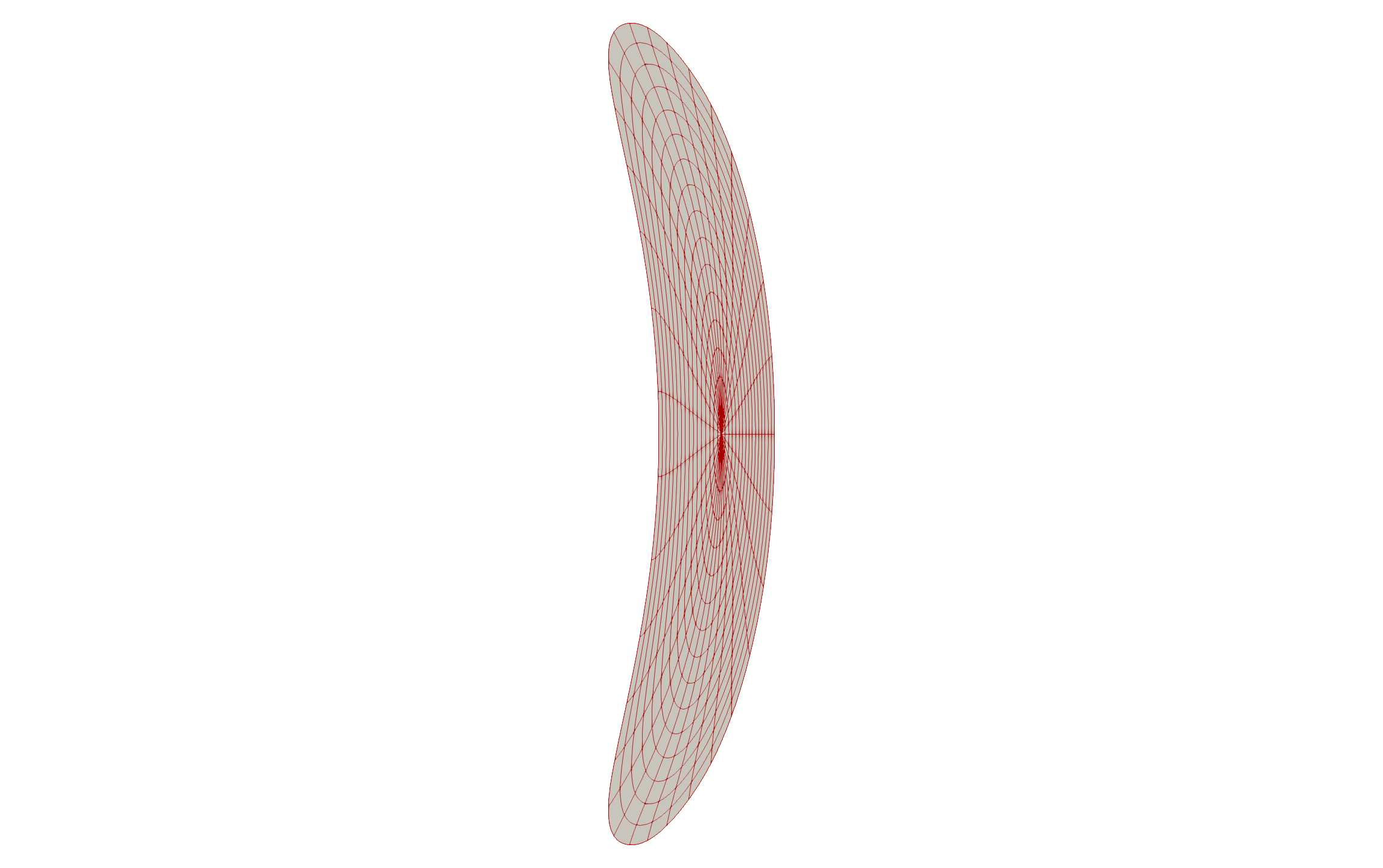}
}
\caption{$C^1$ smooth volumetric geometric models of a tokamak (a) and a stellarator (b) created using polar splines. The blue dots are the control points of the mappings, obtained via interpolation at the Greville points of such magnetic hydrodynamics equilibria using the Python libraries Struphy \cite{struphy} and Psydac \cite{psydac}.}\label{tokamakstellaratorelmo}
\end{figure}
Numerical methods for the simulation of magnetically confined plasma have been studied and developed since the early '60s. Nevertheless, only recently spline spaces for grid-based discretizations have been adopted, see, e.g., \cite{back,gempic,holderied,yaman,pinto}. Such numerical models rely on the approximations of electromagnetic fields in the IgA framework built using tensor-product splines \cite{Buffa1,Buffa2} and, more recently, adaptively-refined splines \cite{Buffa3,johannessen,evans,thbderham}. However, the establishing of plasma simulations within physical domains parametrized by means of polar maps still constitute a major difficulty, due to the singularity introduced at the polar curves. In \cite{yaman,possanner}  models providing $C^1$ continuous approximation are proposed on a 2D disk-like domain using the so called \emph{polar splines} \cite{MDB1,polarC1}. Polar splines have also been used for the  discretization of the 2D de Rham complex on disk-like domains and more in genereal on bivariate surfaces of genus 0 \cite{deepesh}. This work can indeed be reckon as an extension of \cite{deepesh} to a 3D solid toroidal domain. However, as opposed to it, we have chosen to use the standard functional analysis terminology instead of introducing the more abstract, but perhaps more elegant, formalism of the finite element exterior calculus, for the sake of briefness and concreteness.

The next sections are organized as follows. In Section \ref{preliminariesanddiagram} we first introduce the notations used in this paper and recall some basic concepts of the spline theory. Then we set the framework for our construction and we provide an overview of the desired discretization by means of a diagram. In Section \ref{reducedspaces} we define the restricted spline spaces employed in the parametric domain for the construction of the spline complex on the physical domain. In Section \ref{preservation} we prove the preservation of the cohomological structure of \eqref{continuousderham} at the discrete level and in Section \ref{conclusion} we draw the conclusions. 

We end this introductory part with some notation remarks. In what follows, vectorial objects related to physical quantities such as points in the space $\RR^3$ or vector-valued functions will be written in bold italic letters. All the other unphysical vectorial entities, such as ordered sets, multi-degrees or sequences of parameter values, will be denoted with bold non-italic letters. $\bbol{I}_n$ will denote the identity matrix of size $n$. Finally, the indexing related to periodic quantities is cyclic, that is, if $i \in \{1, \ldots, n\}$ is related to periodic entities, then whenever we encounter $i = n + 1$ we mean $i = 1$ and, similarly, if $i = 0$ we mean $i = n$.

\section{Preliminaries \& discretization overview}\label{preliminariesanddiagram}
In this section we present the framework and tools we need for the discretization. We further picture the process in a diagram. In this paper we have chosen to set it up with spline spaces generated by B-splines for the sake of simplicity and briefness. However, an analogous construction could be done starting from, e.g., \emph{multi-degree spline spaces} (see \cite{MDB1,MDB2,MDB3,polarC1}), i.e., collections of NURBS spaces (of different degrees and weights) glued together with some prescribed smoothness. 

We assume the reader to be familiar with the definition and main properties of B-splines. An introduction to this topic can be found, e.g., in the review papers \cite{lychemanni,cetraro} or in the classical books \cite{deboor} and \cite{schumaker}. What follows only introduces the notations adopted in this work.

\subsection{Preliminaries}\label{preliminaries}
Given a degree $p\geq 0$ and an open knot vector $\m{t}= (t_1, \ldots, t_{n+p+1}) \subset \RR$, we indicate as $B_j^p$ the \textbf{$j$th B-spline of degree $p$} defined on $\m{t}$. The \textbf{spline space generated by all the B-splines of degree $p$ on $\m{t}$} will be denoted as $\SSS^p_{\m{t}} =\spn\{B_j^p\}_{j=1}^n$. 
A spline space $\SSS_{\m{t}}^p$ \textbf{is $C^k$}, for $k \geq 0$, if its elements have global smoothness greater or equal to $C^k$ in the interval spanned by the knot vector. Given a $C^k$ spline space $\SSS_{\m{t}}^p$, we define its \textbf{periodic subspace}, $\mathring{\SSS}_{\m{t}}^p$, as the space of splines $f \in \SSS^p_{\m{t}}$ which further satisfy
$$
f^{(j)}(t_1) = f^{(j)}(t_{n+p+1}) \quad \text{for }j=0, \ldots, k.
$$
For $C^1$ spaces, if $\m{B}^{(0)}$ is the vector containing the B-splines spanning $\SSS_{\m{t}}^p$, then $\mathring{\SSS}_{\m{t}}^p$ is spanned by the functions contained in the vector 
\begin{equation}\label{periodicmatrix}
\per{\m{B}}{}^{(0)} := \bbol{H}^{(0)}\m{B}^{(0)}
\end{equation}
where $\bbol{H}^{(0)}$ is the rectangular matrix of size $(n-2)\times n$ defined as $\bbol{H}^{(0)}=[\m{c}|\bbol{I}_{n-2}|\m{c}]$ with $$
\m{c} = \left[\begin{array}{c}c_1\\0\\\vdots\\0\\c_2\end{array}\right] \quad\text{for }(c_1,c_2) = \frac{(t_{n+p+1}-t_n,t_{p+2}-t_1)}{t_{n+p+1}-t_n + t_{p+2} -t_1}.
$$
The construction and understanding of matrix $\bbol{H}^{(0)}$ is detailed in the setting of multi-degree splines in \cite{polarC1}. A more general assemble algorithm, for $C^k$ periodicity with any $k\geq 0$, is illustrated in \cite{MDB1,MDB3}. Figure \ref{periodicsplinebasis} visually compares the B-spline basis of $\SSS_{\m{t}}^p$ with the basis functions of $\mathring{\SSS}_{\m{t}}^p$, for $p=2$ and $\m{t}$ an uniform open knot vector with five distinct knots. 
\begin{figure}
\centering
\subfloat[]{
\includegraphics[width=.45\textwidth]{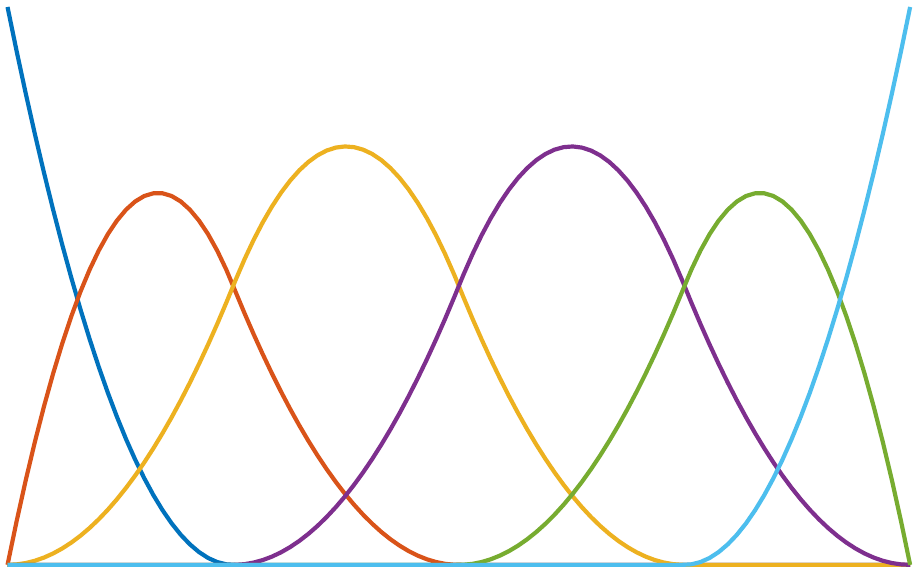}

}\quad
\subfloat[]{
\includegraphics[width=.45\textwidth]{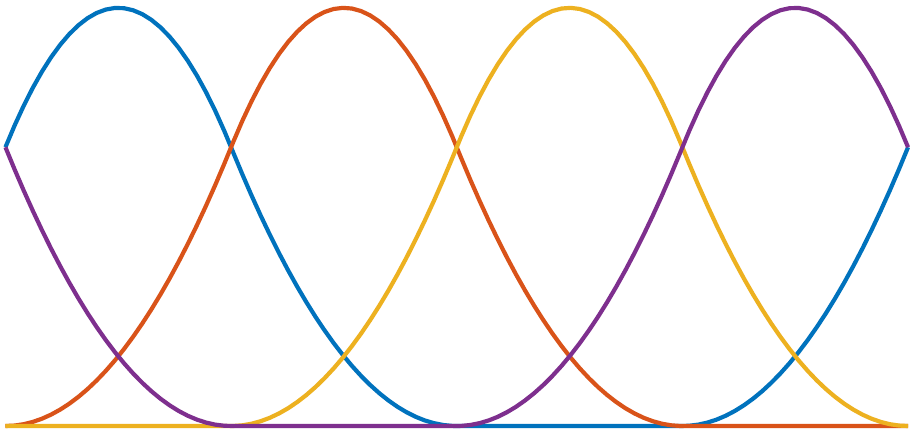}
}
\caption{The B-spline basis of the quadratic $C^1$ spline space (a) and the basis functions of the quadratic $C^1$ periodic spline space (b) on the uniform open knot vector of five distinct knots $\m{t}=[0,0,0,1,2,3,4,4,4]$. We stress that the basis functions in (b) tie in at the endpoints of the interval with $C^1$ continuity.}\label{periodicsplinebasis}
\end{figure}
Note that the first and last basis functions in $\mathring{\m{B}}{}^{(0)}$ are not B-splines. However they preserve all the B-spline properties when identifying the endpoints of the interval. Therefore we will not stress this fact again and, with an abuse of terminology for the sake of simplicity, we will call them B-splines as well and use the same notation.

A useful relation we will make large use of is the \textbf{derivative formula for splines}. Let $\SSS_{\m{t}}^p = \spn\{B_j^p\}_{j=1}^n$ be a $C^1$ spline space. Given $f \in \SSS_{\m{t}}^p$, $$
f(t) = \sum_{j=1}^n f_j B_j^p(t)
$$
we have that
\begin{equation}\label{splinederivative}
f'(t) = \sum_{j=1}^{n-1} (f_{j+1}-f_j) D_j^p(t) \quad \text{where } D_j^p(t) = \frac{p}{t_{j+p+1}-t_{j+1}}B_j^{p-1}(t),
\end{equation}
with $B_j^{p-1}$ defined on the knot vector $\hat{\m{t}}=(\hat{t}_1, \ldots, \hat{t}_{n+p-1}) = (t_2, \ldots, t_{n+p})\subset \m{t}$ for all $j$. Let again $\m{B}^{(0)}$ be the vector of the B-spline basis $\{B_j^p\}_{j=1}^n$. If $\m{f}$ is the vector of coefficients $(f_1, \ldots, f_n)^T$, then $
f = \m{B}^{(0)} \cdot \m{f}
$
and the derivative formula \eqref{splinederivative} can be written as a matrix expression:
\begin{equation}\label{splinederivativematrix}
f'(t) = \m{B}^{(1)} \cdot \bbol{D}_n\m{f}  
\end{equation}
where $\m{B}^{(1)}$ is the vector of functions $\{D_j^p\}_{j=1}^{n-1}$ and $\bbol{D}_n$ the following rectangular matrix of size $(n-1)\times n$
\begin{equation}\label{Dn}
\bbol{D}_n = \left[\begin{array}{ccccc}-1 & 1 & & \\  & \ddots & \ddots&&\raisebox{.25cm}{\makebox(0,0){\scalebox{3}{$\circ\quad$}}}\\ & & \ddots & \ddots\\\raisebox{.5cm}{\makebox(0,0){\scalebox{3}{$~~\circ$}}}&& & -1 & 1\end{array}\right].
\end{equation}
The span of the functions in $\m{B}^{(1)}$ is the $C^0$ spline space $\SSS_{\hat{\m{t}}}^{p-1}$ (see, e.g., \cite[Theorems 6--7]{lychemanni}). 

In the periodic spline space a derivative formula similar to Equation \eqref{splinederivativematrix} holds true. Let $\mathring{\SSS}_{\m{t}}^p$ be the $C^1$ periodic spline space spanned by the functions in the vector $\mathring{\m{B}}{}^{(0)}$. Then a spline $\mathring{f} \in \per{\SSS}_{\m{t}}^p$ can be expressed as $
\per{f} = \per{\m{B}}{}^{(0)} \cdot \per{\m{f}}
$
for a vector of coefficients $\per{\m{f}}$. The derivative of $\per{f}$ is 
\begin{equation*}
\per{f}'(t) = \per{\m{B}}{}^{(1)} \cdot \per{\bbol{D}}_{n-2}\per{\m{f}}  
\end{equation*}
where $\per{\bbol{D}}_{n-2}$ is the square matrix of size $n-2$
\begin{equation}\label{Dnperiodic}
\per{\bbol{D}}_{n-2} = \left[\begin{array}{ccccc}-1 & 1 & & \\  & \ddots & \ddots&&\raisebox{.25cm}{\makebox(0,0){\scalebox{3}{$\circ\quad$}}}\\ & & \ddots & \ddots\\\raisebox{.5cm}{\makebox(0,0){\scalebox{3}{$~~\circ$}}}&& & -1 & 1\\1 & 0 & \cdots &0& -1\end{array}\right],
\end{equation}
and $\per{\m{B}}{}^{(1)}$ is the vector of splines $\{\per{D}_{j}^p\}_{j=1}^{n-2}$ spanning  the $C^0$ periodic space $\per{\SSS}_{\hat{\m{t}}}^{p-1}$ given by
\begin{equation}\label{periodicderivativebasis}
\per{\m{B}}{}^{(1)} = \bbol{H}^{(1)}\m{B}^{(1)}
\end{equation}
where $\bbol{H}^{(1)}$ is the following rectangular matrix of size $(n-2)\times (n-1)$: $$\bbol{H}^{(1)}= \left[
\begin{minipage}{.19\textwidth}
\begin{tikzpicture}[scale=3]
\draw (.2,0) -- (.2,1);
\draw (.8,0) -- (.8,1);
\draw (0,.2) -- (1,.2);
\node at (.1,.6) {$\begin{array}{c} 0 \\ \vdots \\ \vdots \\ 0 \end{array}$};
\node at (.9,.6) {$\begin{array}{c} 0 \\ \vdots \\ \vdots \\ 0 \end{array}$};
\node at (.1,.1) {$c_2$};
\node at (.925,.1) {$c_1$};
\node at (.5,.1) {$0~\cdots~0$};
\node at (.5,.6) {\scalebox{1.5}{$\bbol{I}_{n-3}$}};
\end{tikzpicture}
\end{minipage}
\right].
$$
\begin{figure}
\centering
\subfloat[]{
\includegraphics[width=.45\textwidth]{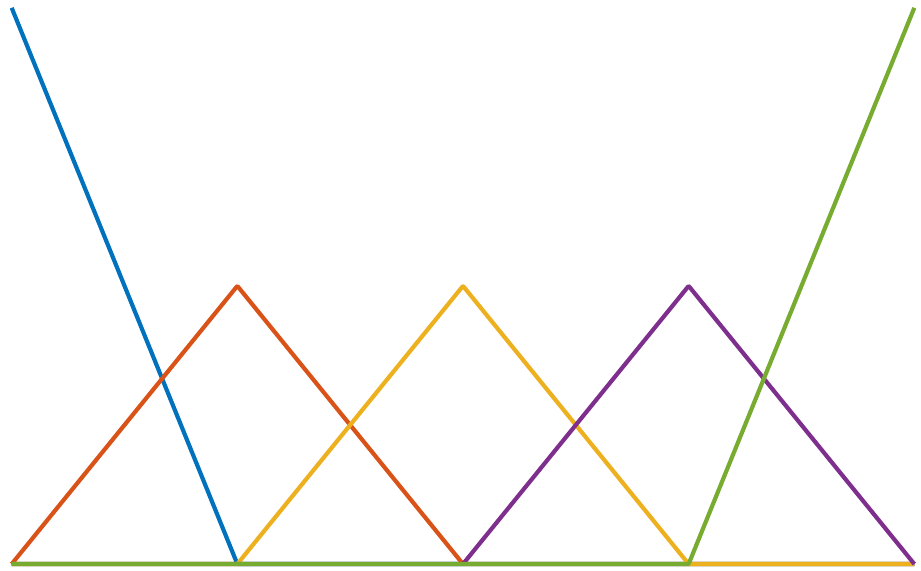}

}\quad
\subfloat[]{
\includegraphics[width=.45\textwidth]{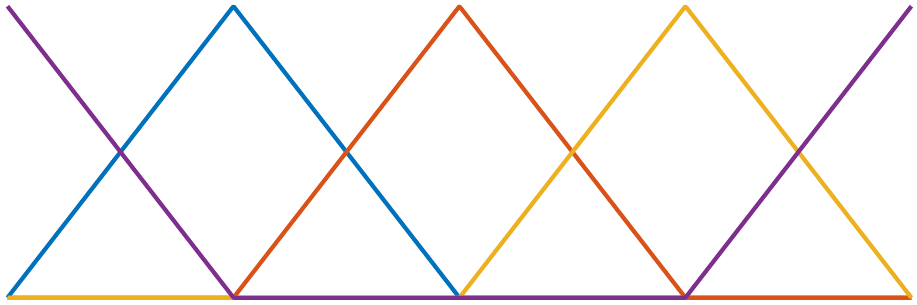}
}
\caption{Comparison of the linear spline functions spanning the derivatives of the quadratic splines generated by the bases reported in Figure \ref{periodicsplinebasis} (a)-(b) respectively. These derivatives are $C^0$ (a) and $C^0$ periodic (b) respectively as the linear splines in (b) tie in at the endpoints of the interval with $C^0$ continuity.}\label{periodicderivatives}
\end{figure}
In Figure \ref{periodicderivatives} we visually compare the functions in $\m{B}^{(1)}$ and $\per{\m{B}}{}^{(1)}$ when $\m{B}^{(0)}$ and $\per{\m{B}}^{(0)}$ contain the splines in Figure \ref{periodicsplinebasis} (a)-(b) respectively. 

Another notion we need is the definition of \textbf{design-through-analysis compatible matrix}, introduced in \cite{polarC1}. Consider a set of B-spline like functions, that is, a set of functions that are linearly independent, locally supported and that form a positive partition of unity. When a design-through-analysis compatible, or DTA-compatible, matrix is applied to such a collection, it provides another set of B-spline like functions. More precisely, a matrix $\bbol{E}$ is DTA-compatible, if
\begin{itemize}
\item $\bbol{E}$ has full rank (preservation of the linear independence),
\item each column of $\bbol{E}$ sums to 1 (preservation of the partition of unity),
\item each entry of $\bbol{E}$ is non-negative (preservation of the positivity),
\item $\bbol{E}$ preserves the locality of the supports through sparsity.
\end{itemize}
We have already seen an example of such matrices (aside the identity matrix). Matrix $\bbol{H}^{(0)}$ of Equation \eqref{periodicmatrix}, which transforms the B-spline basis of $\SSS_{\m{t}}^p$ in the basis of $\per{\SSS}_{\m{t}}^p$ is a DTA-compatible matrix.

Finally, we make the following simple remark which, nevertheless, will be fundamental to prove commutation in the discretization diagram.
\begin{oss}\label{linearalgebraremark}
Let $A$ be a functional vector space spanned by $\{\alpha_j\}_{j=1}^n$ and let $B\subseteq A$ be a subspace spanned by $\{\beta_i\}_{i=1}^m$. If the vectors $\pmb{\upalpha}$ and $\pmb{\upbeta}$ contain the bases of $A$ and $B$ respectively, let $\bbol{M}$ be the restriction matrix, $\pmb{\upbeta} = \bbol{M}\pmb{\upalpha}$. Then, given $f \in B$, $f = \pmb{\upbeta} \cdot \m{b}$ with $\m{b} \in \RR^m$ its vector of coefficients, we can always write $f$ in terms of the basis functions of $A$, $f = \pmb{\upalpha} \cdot \m{a}$ with $\m{a} \in \RR^n$ the vector of coefficients given by $\m{a} = \bbol{M}^T\m{b}$.
\end{oss}

\subsection{Problem setting \& discretization overview}\label{ssec:discretizationoverview}
We now present the process leading to a spline sub-complex of the de Rham complex \eqref{continuousderham} preserving the cohomology dimensions, in a solid toroidal domain $\Omega^{pol}$. 

For the sake of simplicity, we drop the subscripts in the spline spaces we are going to define hereafter, as their specific knot vectors will be not relevant anymore.

Given a multidegree $\m{p}=(p^r,p^\theta,p^\phi)$, let  \begin{itemize}
\item $\SSS^{p^r}$ be a $C^1$ spline space defined on the interval $[0, R]$ spanned by the B-splines $\{B_i^{p^r}\}_{i=1}^{n^r}$,
\item $\per{\SSS}^{p^\theta}$ be a $C^1$ periodic spline space defined on the interval $[0, 2\pi]$ spanned by the B-splines $\{\per{B}_j^{p^\theta}\}_{j=1}^{n^\theta}$,
\item $\per{\SSS}^{p^\phi}$ be a $C^1$ periodic spline space defined on the interval $[0, 2\pi]$ spanned by the B-splines $\{\per{B}_k^{p^\varphi}\}_{k=1}^{n^\varphi}$.
\end{itemize}
We define the \textbf{parametric domain} $\Omega$ as the Cartesian product of the intervals, $\Omega := [0, R]\times[0, 2\pi]\times[0, 2\pi]$ and \textbf{the spline space in} $\Omega$, $\SSS^{p^r,p^\theta,p^\varphi}$, as the tensor product of the considered univariate spline spaces on such intervals:
$$
\SSS^{p^r,p^\theta,p^\phi} := \SSS^{p^r}\otimes \per{\SSS}^{p^\theta} \otimes \per{\SSS}^{p^\phi} = \spn\{B_{ijk}^{\m{p}}:=B_i^{p^r}\per{B}_j^{p^\theta}\per{B}_k^{p^\phi}\}_{i,j,k=1}^{n^r,n^\theta,n^\varphi}.
$$
Let $\pmb{F}: \Omega\to \Omega^{pol}$ be \textbf{the polar map},
$$
\Omega \ni (r,\theta,\phi) \mapsto \pmb{F}(r,\theta,\phi) = (F_u(r,\theta,\phi),F_v(r,\theta,\phi), F_w(r,\theta,\phi)) = (u,v,w) \in \Omega^{pol},
$$
which transforms $\Omega$ into the toroidal domain $\Omega^{pol}$, see Figure \ref{polarmap}.
\begin{figure}
\centering
\begin{tikzpicture}[scale=.75]
\foreach \x in{0,...,4}
{\draw (0,\x ,4) -- (4,\x ,4);
  \draw (\x ,0,4) -- (\x ,4,4);
}
\draw[blue,ultra thick] (0,0,4) -- (4,0,4);
\draw[ultra thick,red] (0,0,4) -- (0,4,4);
\draw[ultra thick] (4,0,4) -- (4,4,4);
\draw[ultra thick,blue] (4,4,4) -- (0,4,4);
\draw[ultra thick] (4,0,0) -- (4,4,0);
\draw[ultra thick,blue] (4,4,0) -- (0,4,0);
\foreach \x in{0,1,2,3,4}
{\draw[dashed] (4,\x ,2) -- (4,\x ,1);
  \draw[dashed] (\x ,4,2) -- (\x ,4,1);
  \draw (4,\x ,4) -- (4,\x ,2);
  \draw (\x ,4,4) -- (\x ,4,2);
  \draw (4,\x ,1) -- (4,\x ,0);
  \draw (\x ,4,0) -- (\x ,4,1);
  \draw (4,0,\x ) -- (4,4,\x );
  \draw (0,4,\x ) -- (4,4,\x );
}
\draw[-stealth] (4.75,3) --node[below]{$r$} (5.75,3);
\draw[-stealth] (4.75,3) --node[left]{$\theta$} (4.75,4);
\draw[-stealth](4.75,3) -- (5.25,3.5) node[above]{$\phi$};
\node at (3.75,-1) {$\Omega$};
\draw[->] (4.75,1.5) -- node[above]{$\pmb{F}$} (6.25,1.5);
\node at (10.35,1.5) {\includegraphics{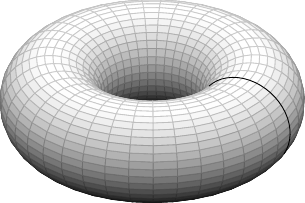}};
\node at (8,-1) {$\Omega^{pol}$};
\node at (17.5,1.5) {\includegraphics{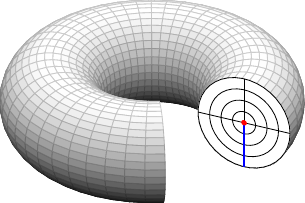}};
\node at (15.15,-1) {$\Omega^{pol}$};
\end{tikzpicture}
\caption{The transformation of the parametric domain $\Omega$ into the toroidal domain $\Omega^{pol}$ by the polar map $\pmb{F}$. The three-dimensional tensor mesh defined on $\Omega$ is mapped to a three-dimensional polar mesh in $\Omega^{pol}$. The latter can be seen in the right-most figure.}\label{polarmap}
\end{figure}
We assume $\pmb{F}$ in the spline space $\SSS^{p^r,p^\theta,p^\phi}$, that is,
\begin{equation*}
\pmb{F}(r,\theta,\phi) := \sum_{k=1}^{n^\phi}\sum_{j=1}^{n^\theta} \sum_{i=1}^{n^r} \pmb{F}_{ijk} B_{ijk}^{\m{p}}(r,\theta,\phi)
\end{equation*} 
with $\pmb{F}_{ijk} = \left((\bar{\rho} + \rho_i\cos\theta_j)\cos\varphi_k,(\bar{\rho} + \rho_i\cos\theta_j)\sin\varphi_k,\rho_i \sin\theta_j\right)\in \RR^3$, for $\bar{\rho}>2$ a fixed number and
$$
\begin{array}{ll}
\rho_i = \frac{i-1}{n^r-1} \in [0,1]&\text{for }i=1, \ldots, n^r,\\\\
\theta_j = \frac{j-1}{n^\theta}2\pi \in [0,2\pi)&\text{for }j=1, \ldots, n^\theta,\\\\
\varphi_k = \frac{k-1}{n^\phi}2\pi \in [0,2\pi) &\text{for }k=1, \ldots, n^\phi.
\end{array}
$$
With reference to Figure \ref{polarmap}, the map $\pmb{F}$ identifies in $\Omega^{pol}$ the bottom side with the top side and the front side with the back side of $\Omega$. The left side is instead collapsed to a \textbf{polar curve} running around the handle of $\Omega^{pol}$. Let us introduce the following \textbf{radial, toroidal and poloidal directions} in $\Omega^{pol}$:
$$
\begin{tikzpicture}[scale=1.25]
\fill (0,0) circle (.05);
\draw (0,0) circle (1);
\draw (4,0) circle (1);
\draw (3,0) arc(0:180:1 and .5);
\draw (5,0) arc(0:180:3 and 2);
\draw[-stealth] (0,0) --node[below]{radial~~} (1,0);
\draw[-stealth] (4.5,-1) arc(-66.5:0:1);
\node at (5.65,-.7) {poloidal};
\draw[-stealth] (5.1,.3) arc(0:45:2.2 and 1.5);
\node at (5.65, 1) {toroidal};
\end{tikzpicture}
$$
with the radial direction generating from the polar curve.
We further define the \textbf{pushforward B-splines} as the images via $\pmb{F}$ of the B-splines spanning $\SSS^{p^r,p^\theta,p^\phi}$, i.e., the collection of functions $B_{ijk}^{\m{p},pol}: \Omega^{pol}\to \RR$ defined as $$
B_{ijk}^{\m{p},pol} := B_{ijk}^{\m{p}} \circ \pmb{F}^{-1} \quad\forall\, i,j,k.
$$
Except at the polar curve, where $\pmb{F}^{-1}$ is not well-defined and the pushforward B-splines are multivalued, the $B_{ijk}^{\m{p},pol}$ are $C^1$-continuous in the radial direction. Such smoothness is drawn from the basis elements in $\SSS^{p^r}$. In the same way and thanks to the periodicity of the univariate spline spaces $\per{\SSS}^{p^\theta}$ and $\per{\SSS}^{p^\phi}$, the $C^1$-continuity of the pushforward B-splines is also guaranteed in the toroidal and poloidal directions, far from the polar curve. 
However, our strategy for the discretization of the de Rham complex \eqref{continuousderham} is to construct a space $V_0^{pol}\subseteq H^1(\Omega^{pol})$ of splines which are $C^1$-differentiable in $\Omega^{pol}$, in particular at the polar curve, and then look for spline spaces $V_1^{pol},V_2^{pol}, V_3^{pol}$, in $H(\curl;\Omega^{pol}), H(\div;\Omega^{pol})$ and $L^2(\Omega^{pol})$ respectively, such that the differential operators ``$\grad$'', ``$\curl$'' and ``$\div$'' are locally exact near the polar curve. In this manner, every element of $V_1^{pol}$ would be the gradient of an element in $V_0^{pol}$, every element of $V_2^{pol}$ would be the curl of an element in $V_1^{pol}$ and every element of $V_3^{pol}$ would be the divergence of an element in $V_2^{pol}$, near the polar curve in $\Omega^{pol}$. As a consequence, the $C^1$-smoothness in $V_0^{pol}$ would ensure the $C^0$-smoothness of the functions in $V_1^{pol}$ and $V_2^{pol}$ and the boundedness of the elements of $V_3^{pol}$, despite the global $C^{-1}$-continuity. For this reason, we shall extract from the spline space a suitable subspace where $\pmb{F}$ is invertible. 

All the spaces $V_0^{pol},V_1^{pol},V_2^{pol}$ and $V_3^{pol}$ will be obtained by pushforward operators from reduced spline spaces on the parametric domain $\Omega$. The full discretization process is schematized in the diagram of Figure \ref{discretization}. 
\begin{figure}
\centering
\scalebox{.85}{
\begin{tikzpicture}
\matrix (m)[matrix of math nodes,column sep=2em,row sep=3em]{ 
\SSS^{p^r,p^\theta,p^\phi} \pgfmatrixnextcell \SSS^{p^r-1,p^\theta,p^\phi}\times \SSS^{p^r,p^\theta-1,p^\phi}\times \SSS^{p^r,p^\theta,p^\phi-1} \pgfmatrixnextcell \SSS^{p^r,p^\theta-1,p^\phi-1}\times\SSS^{p^r-1,p^\theta,p^\phi-1}\times \SSS^{p^r-1,p^\theta-1,p^\phi} \pgfmatrixnextcell \SSS^{p^r-1,p^\theta-1,p^\phi-1}\\
V_0 \pgfmatrixnextcell V_1 \pgfmatrixnextcell V_2 \pgfmatrixnextcell V_3\\
V_0^{pol} \pgfmatrixnextcell V_1^{pol} \pgfmatrixnextcell V_2^{pol}\pgfmatrixnextcell V_3^{pol}\\
H^1(\Omega^{pol}) \pgfmatrixnextcell H(\curl;\Omega^{pol}) \pgfmatrixnextcell H(\div; \Omega^{pol})\pgfmatrixnextcell L^2(\Omega^{pol})\\};

\draw[-stealth] (m-4-1) --node[above]{$\grad$} (m-4-2);
\draw[-stealth] (m-3-1) --node[above]{$\grad$} (m-3-2);
\draw[-stealth] (m-2-1) --node[above]{$\grad$} (m-2-2);
\draw[-stealth] (m-1-1) --node[above]{$\grad$} (m-1-2);
\draw[-stealth] (m-1-2) --node[above]{$\curl$} (m-1-3);
\draw[-stealth] (m-2-2) --node[above]{$\curl$} (m-2-3);
\draw[-stealth] (m-3-2) --node[above]{$\curl$} (m-3-3);
\draw[-stealth] (m-4-2) --node[above]{$\curl$} (m-4-3);
\draw[-stealth] (m-4-3) --node[above]{$\div$} (m-4-4);
\draw[-stealth] (m-3-3) --node[above]{$\div$} (m-3-4);
\draw[-stealth] (m-2-3) --node[above]{$\div$} (m-2-4);
\draw[-stealth] (m-1-3) --node[above]{$\div$} (m-1-4);

\draw[-stealth] (m-2-1) --node[right]{$\text{id}$} (m-1-1);
\draw[-stealth] (m-2-2) --node[right]{$\text{id}$} (m-1-2);
\draw[-stealth] (m-2-3) --node[right]{$\text{id}$} (m-1-3);
\draw[-stealth] (m-2-4) --node[right]{$\text{id}$} (m-1-4);
\draw[-stealth] (m-2-1) --node[right]{$\cF^0$} (m-3-1);
\draw[-stealth] (m-2-2) --node[right]{$\cF^1$} (m-3-2);
\draw[-stealth] (m-2-3) --node[right]{$\cF^2$} (m-3-3);
\draw[-stealth] (m-2-4) --node[right]{$\cF^3$} (m-3-4);
\draw[-stealth] (m-3-1) --node[right]{$\text{id}$} (m-4-1);
\draw[-stealth] (m-3-2) --node[right]{$\text{id}$} (m-4-2);
\draw[-stealth] (m-3-3) --node[right]{$\text{id}$} (m-4-3);
\draw[-stealth] (m-3-4) --node[right]{$\text{id}$} (m-4-4);
\end{tikzpicture}
}
\caption{The scheme of the different complexes we consider. The second bottom row is the spline sub-complex on $\Omega^{pol}$ we use to discretize the de Rham complex \eqref{continuousderham} (bottom row) and which preserves the cohomological structure of the latter. The two top rows are complexes on the parametric domain $\Omega$. The top one is the spline complex of the full tensor product spline space. The second top is the complex of the reduced spline space in which the pushforward operators $\cF^i$ related to the polar map $\pmb{F}$ can be applied.}\label{discretization}
\end{figure} 
We start with the complex of the full spline space on the parametric domain $\Omega$ (top row). This complex has been analyzed in \cite{Buffa2} and it can be employed for the discretization of the de Rham complex on non-singular physical domain.
However, as we explained, for polar maps we need to define reduced spline spaces on $\Omega$ in which we can apply the corresponding pushforward. 
The definition of such reduced spaces is the content of Section \ref{reducedspaces}. The \textbf{pushforward operators} $\cF^0, \cF^1, \cF^2, \cF^3$ can then be employed to obtain our final discretization on $\Omega^{pol}$. These are defined as follows:
\begin{equation}\label{pushforward}
\begin{array}{l}
V_0 \ni f \mapsto \cF^0(f) :=f \circ \pmb{F}^{-1} =: f^{pol} \in V_0^{pol},\\\\
V_1 \ni \pmb{g} \mapsto \cF^1(\pmb{g}) := (D\pmb{F}^{-T}\pmb{g}) \circ \pmb{F}^{-1} =: \pmb{g}^{pol} \in V_1^{pol},\\\\
V_2 \ni \pmb{h} \mapsto \cF^2(\pmb{h}) := ((\det D\pmb{F})^{-1}D\pmb{F}\pmb{h}) \circ \pmb{F}^{-1} =: \pmb{h}^{pol} \in V_2^{pol},\\\\
V_3 \ni m \mapsto \cF^3(m) := ((\det D\pmb{F})^{-1}m) \circ \pmb{F}^{-1} =: m^{pol} \in V_3^{pol},
\end{array}
\end{equation}
where $D\pmb{F}$ is the Jacobian matrix of $\pmb{F}$. The space $V_0^{pol}$ in the definition of $\cF^0$ is referred to as \textbf{polar spline space} in the literature \cite{MDB1,polarC1,deepesh}.
We shall show that the discretization process is well-defined by proving commutation of the diagram and we shall verify that the spline complex on the physical domain at the second last row of the diagram preserves the cohomology dimensions of the continuous de Rham complex at the last row.

\begin{oss}
We have considered a specific polar map $\pmb{F}$. This choice has been done in the interest of standardization. Namely, the chosen domain $\Omega^{pol}$ is considered just to identify the right spline spaces to deal with the polar singularity. Given a different toroidal solid $\tilde{\Omega}^{pol}$, such those in Figure \ref{tokamakstellaratorelmo}, assume that it is image of a polar map $\pmb{G}: \Omega \to \tilde{\Omega}^{pol}$.  The spaces of the reduced spline complex we shall define on $\Omega$ can be safely pushed-forward on $\tilde{\Omega}^{pol}$ through $\pmb{G}$, instead that on $\Omega^{pol}$ through $\pmb{F}$, provided that $G $ can be written as $\pmb{G} = \bar{\pmb{G}} \circ \pmb{F}$ with $\bar{\pmb{G}}$ a diffeomorphism between the polar domains $\Omega^{pol}$ and $\tilde{\Omega}^{pol}$. This would preserve the smoothness of the splines in the spline complex on $\Omega^{pol}$ as $\bar{\pmb{G}}$ does not introduce new singularities. In other words, $\Omega^{pol}$ plays the role of a \textbf{polar parametric domain} rather than the physical domain in practice. It is an intermediate domain considered to remove the polar singularity hurdle. 
\end{oss}

\section{Extraction operators and reduced spaces in $\Omega$}\label{reducedspaces}
In this section we identify subspaces of the full tensor product spline spaces in the parametric domain, for which the pushforward operators can be applied in order to create appropriate approximant spaces for $H^1(\Omega^{pol}), H(\curl;\Omega^{pol}), H(\div;\Omega^{pol})$ and $L^2(\Omega^{pol})$. Extraction matrices will be defined to determine such reduced spline spaces. We further show the commutation of such extraction process with the differential operators $\grad, \curl$ and $\div$.

\subsection{The space $V_0$}\label{spaceV0}
Let $f \in \SSS^{p^r,p^\theta,p^\phi}$ and $f^{pol}:=\cF^0(f)$ be the corresponding pushforward spline on $\Omega^{pol}$, that is, $$
f^{pol}(\pmb{F}(r,\theta,\phi)) \defeq f(r,\theta,\phi).
$$
We have that $f^{pol} \in C^1(\Omega^{pol})$ if $f^{pol}$ is well-defined and $C^1$ in the radial, poloidal and toroidal directions everywhere in $\Omega^{pol}$. 
The polar map $\pmb{F}$ collapses the left side of $\Omega$, $\{(r,\theta,\phi) \in \Omega~|~ r = 0\}$, to a polar curve lying on the plane $\{(u,v,w) \in \RR^3~|~ w=0\}$. Indeed, by using the fact that only $B_1^{p^r}(0)\neq 0$, with $B_1^{p^r}(0) = 1$, $\rho_1 = 0$ and that the sum over the $\per{B}_i^{p^\theta}(\theta)$ is one for any $\theta \in [0, 2\pi]$, we have
\begin{equation*}
\begin{split}
\pmb{F}(0,\theta, \phi) &= \sum_{k=1}^{n^\phi}\sum_{j=1}^{n^\theta}\left((\bar{\rho} + \rho_1\cos\theta_j)\cos\varphi_k,(\bar{\rho} + \rho_1\cos\theta_j)\sin\varphi_k,\rho_1 \sin\theta_j\right)B_1^{p^r}(0)\per{B}_j^{p^\theta}(\theta)\per{B}_k^{p^\phi}(\phi)\\
&=\sum_{k=1}^{n^\phi}\sum_{j=1}^{n^\theta}\left(\bar{\rho}\cos\varphi_k,\bar{\rho}\sin\varphi_k,0\right)\per{B}_j^{p^\theta}(\theta)\per{B}_k^{p^\phi}(\phi)\\
&=\sum_{k=1}^{n^\phi}(\bar{\rho}\cos\varphi_k,\bar{\rho}\sin\varphi_k,0)\per{B}_k^{p^\phi}(\phi).
\end{split}
\end{equation*}
Thus, by imposing that $f(0,\theta, \phi)$ is constant in the $\theta$ direction, that is
\begin{equation}\label{c1}
f(0,\theta, \phi) \in \SSS^{p^\phi}, f(0,\theta,\phi) = \sum_{k=1}^{n^\phi} \alpha_k \per{B}_k^{p^\phi}(\phi),
\end{equation}
with $\alpha_k \in \RR$ for $k=1, \ldots, n^\phi$, we ensure that $f^{pol}$ is well-defined and that its restriction to the toroidal direction is $C^1$-continuous in $\Omega^{pol}$, as $\SSS^{p^\varphi}$ is $C^1$ periodic.

The conditions to have $f^{pol}$ $C^1$-smooth also in the radial and poloidal directions have been analyzed in \cite{polarC1}. These enforce the following further relation to be satisfied together with Equation \eqref{c1}: there must exist $\beta_k, \gamma_k \in \RR$ for $k=1, \ldots, n^\phi$ such that 
\begin{equation}\label{c2}
\frac{\partial}{\partial r} f(0,\theta,\phi) = \sum_{k=1}^{n^\phi} \sum_{j=1}^{n^\theta} \sum_{i=1}^2  [\pmb{F}_{ijk}\cdot (\beta_k,\gamma_k, 0)] \frac{\partial}{\partial r} B_{ijk}^{\m{p}}(0,\theta,\phi),
\end{equation}
with $\pmb{F}_{ijk}$ the control points of the polar map.
By following the construction of \cite{polarC1}, we have that Equations \eqref{c1}-\eqref{c2} are verified if $f$ is taken in the restricted space $V_0$, defined as follows. Let $\m{B}^{(0,0,0)}$ be the vectorization of the B-spline basis $\{B_{ijk}^{\m{p}}\}_{i,j,k=1}^{n^r,n^\theta,n^\phi}$ of $\SSS^{p^r,p^\theta,p^\phi}$ obtained by flattening the multi-index $ijk$ as $ijk \to j+(i-1)n^\theta+(k-1)n^rn^\theta$. Then $V_0$ is spanned by the splines in the collection $\m{N}^{(0)}$ given by 
\begin{equation}\label{N0}
\m{N}^{(0)} = \bbol{E}^{(0)}\m{B}^{(0,0,0)}
\end{equation}
where $\bbol{E}^{(0)}$ is the restriction matrix, of size $n^\phi(n^\theta(n^r-2)+3)\times n^rn^\theta n^\phi$ defined as Kronecker's product $\bbol{E}^{(0)} \defeq \bbol{I}_{n^\phi} \otimes \bbol{E}$ with $\bbol{E}$ the following block matrix, introduced in \cite{MDB1}:
\begin{equation}\label{E0}
\bbol{E} \defeq\left[\begin{array}{cc} \bar{\bbol{E}} & \scalebox{1.5}{$\circ$}\\\scalebox{1.5}{$\circ$} & \bbol{I}_{n^\theta(n^r-2)}\end{array}\right], \quad
\bar{\bbol{E}} = \left[\begin{array}{cccccccc}
\bar{E}_{1,1} & \cdots & \bar{E}_{1,n^\theta} & \bar{E}_{1,n^\theta + 1} & \cdots  & \bar{E}_{1,2n^\theta}\\
\bar{E}_{2,1} & \cdots & \bar{E}_{2,n^\theta} & \bar{E}_{2,n^\theta + 1} & \cdots  & \bar{E}_{2,2n^\theta}\\
\bar{E}_{3,1} & \cdots & \bar{E}_{3,n^\theta} & \bar{E}_{3,n^\theta + 1} & \cdots  & \bar{E}_{3,2n^\theta}\\
\end{array}\right]
\end{equation}
where $\bar{\bbol{E}}$ the rectangular matrix of size $3\times 2n^\theta$ whose elements are:
\begin{equation}\label{eq:Ebar}
\begin{array}{l}
\bar{E}_{\ell,j} = \frac{1}{3} \qquad\forall\,\ell=1,2,3\text{ and }\forall\,j=1, \ldots, n^\theta,\\\\
\left[\begin{array}{c}\bar{E}_{1,n^\theta + j}\\\bar{E}_{2,n^\theta + j}\\\bar{E}_{3,n^\theta + j}\end{array}\right] = \left[\begin{array}{c}\frac{1}{3}\\\frac{1}{3}\\\frac{1}{3}\end{array}\right]+\left[\begin{array}{cc}\frac{1}{3} & 0\\ -\frac{1}{6} & \frac{\sqrt{3}}{6}\\-\frac{1}{6}&-\frac{\sqrt{3}}{6}\end{array}\right]\left[\begin{array}{c}\cos\theta_j\\\sin\theta_j\end{array}\right]\qquad\forall\,j=1, \ldots, n^\theta.
\end{array}
\end{equation}
As $\bbol{E}$ is DTA-compatible \cite[Theorem 3.2]{polarC1}, also $\bbol{E}^{(0)}$ is DTA-compatible. This means that the functions in $\m{N}^{(0)}$, $\{N_\ell^{(0)}\}_{\ell=1}^{n^0}$ with $n^0 \defeq n^\phi(n^\theta(n^r-2)+3)$, are linearly independent, locally supported and form a convex partition of unity on $\Omega$. Moreover their pushforward counterpart, $N_\ell^{(0),pol}:\Omega^{pol} \to \RR$, defined as $$
N_\ell^{(0),pol} :=\cF^0(N_\ell^{(0)}) = N_\ell^{(0)} \circ \pmb{F}^{-1}, 
$$
are well-defined, $C^1$-smooth, linearly independent and form a convex partition of unity on $\Omega^{pol}$. 

\subsection{Geometric interpretation of the degrees of freedom}\label{geometricinterpretation}
A geometric interpretation to the degrees of freedom (DOFs) defining the functions in $\SSS^{p^r,p^\theta,p^\phi}$, $V_0$ and in the spaces we are going to consider in the next sections will help us to understand the action of the differential operators on such discrete spaces and will make the discretization scheme reported in Figure \ref{discretization} clearer. This interpretation was introduced in \cite{Buffa3} and adopted later in, e.g., \cite{evans, deepesh}. 
Suppose to have a spline complex on the parametric domain $\Omega$:
$$
\mathfrak{X}:\quad 0 \xrightarrow{\text{id}} X_0 \xrightarrow{\grad} X_1 \xrightarrow{\curl} X_2 \xrightarrow{\div} X_3 \xrightarrow{0} 0.
$$
$X_0$ could be, for instance, the full spline space $\SSS^{p^r,p^\theta,p^\phi}$ or the restricted space $V_0$. Suppose also to have a geometry map $\pmb{G}:\Omega\to \Omega^G$, from the parametric domain $\Omega$ to some physical domain $\Omega^G$ and let $\pmb{G}$ belong to $X_0$. From $\pmb{G}$ one can define the control mesh $\cM^G$ by connecting its control points. Finally, let us assume the pushforward operators for $\pmb{G}$ with respect to $\mathfrak{X}$, $\mathcal{G}_\ell$ with $\ell =0,1,2,3$, to be well defined. 
Then \cite[Proposition 4.4]{Buffa3} ensures that the spline complex $\mathfrak{X}$ is isomorphic to the \textbf{cochain complex on} $\cM^G$. The latter can be defined modifying the definition of  simplicial cochain complexes \cite[Section 2.5]{FEECbook} by replacing the uderlying triagulation with the control mesh $\cM^G$. Such cochain complex on $\cM^G$ has the following form
$$
0 \xrightarrow{\text{id}} \cA^0(\cM^G) \xrightarrow{\partial^0} \cA^1(\cM^G) \xrightarrow{\partial^1} \cA^2(\cM^G) \xrightarrow {\partial^2} \cA^3(\cM^G) \xrightarrow{0} 0.
$$ 
What interests us in this complex is that the DOFs in $\cA^0(\cM^G)$ are assigned to the vertices of $\cM^G$, in $\cA^1(\cM^G)$ to the (oriented) edges, in $\cA^2(\cM^G)$ to the (oriented) faces and in $\cA^3(\cM^G)$ to the (oriented) volumes.

Let now $\pmb{G}$ be the identity map and $\mathfrak{X}$ be the complex of the full spline space on $\Omega$ reported in the first row of the scheme in Figure \ref{discretization}. 
The control mesh $\cM^G$ in this case is a tensor mesh $\cM$ with vertices of indices $\{(i,j,k): i=1, \ldots,n^r;~j=1,\ldots,n^\theta;~k=1, \ldots, n^\phi\}$ called \textbf{Greville mesh} \cite[Section 3.2.1]{Buffa3}. Then, there is a one-to-one correspondence between\begin{itemize}
\item the DOFs in $\SSS^{p^r,p^\theta,p^\phi}$ and the vertices of $\cM$,
\item the DOFs in $\SSS^{p^r-1,p^\theta,p^\phi}\times\SSS^{p^r,p^\theta-1,p^\phi}\times\SSS^{p^r,p^\theta,p^\phi-1}$ and the (oriented) edges in $\cM$
\item the DOFs in $\SSS^{p^r,p^\theta-1,p^\phi-1}\times\SSS^{p^r-1,p^\theta,p^\phi-1}\times\SSS^{p^r-1,p^\theta-1,p^\phi}$ and the (oriented) faces in $\cM$,
\item the DOFs in $\SSS^{p^r-1,p^\theta-1,p^\phi}$ and the (oriented) volumes, or elements, in $\cM$.
\end{itemize} 

A similar matching of DOFs to geometric entities holds when considering the reduced spaces $V_0, V_1, V_2, V_3$. This time we take $\pmb{G}$ as the following $C^1$ parametrization of the toroidal domain:
\begin{equation}\label{geometrymap}
\pmb{G}(r, \theta, \varphi) := \sum_{\ell=1}^{n_0} \pmb{G}_{\ell} N_\ell^{(0)}(r,\theta,\varphi) \quad \text{for }(r,\theta,\varphi) \in \Omega
\end{equation}
where the $\pmb{G}_\ell$ have the follwing expressions:
$$
\begin{array}{l}
\pmb{G}_{1 +(k-1)(n^\theta(n^r-2)+3)}= ((\bar{\rho} + \rho_2)\cos\varphi_k, (\bar{\rho} + \rho_2)\sin\varphi_k, 0)\\\\
\pmb{G}_{2 +(k-1)(n^\theta(n^sr-2)+3)} = ((\bar{\rho} - \frac{1}{2}\rho_2)\cos\varphi_k, (\bar{\rho} - \frac{1}{2}\rho_2)\sin\varphi_k, \frac{\sqrt{3}}{2}\rho_2)\\\\
\pmb{G}_{3 +(k-1)(n^\theta(n^r-2)+3)} = ((\bar{\rho} - \frac{1}{2}\rho_2)\cos\varphi_k, (\bar{\rho} - \frac{1}{2}\rho_2)\sin\varphi_k, -\frac{\sqrt{3}}{2}\rho_2)\\\\
\pmb{G}_{3+j + (i-3)n^\theta+(k-1)(n^\theta(n^r-2)+3)} = \pmb{F}_{ijk}\quad \text{for } i = 3, \ldots, n^r;\, j = 1, \ldots, n^\theta;\, k = 1, \ldots, n^\varphi;
\end{array}
$$
This time \cite[Proposition 4.4]{Buffa3} guarantees that the DOFs in the complex of the reduced spline spaces (reported in the second row of the scheme in Figure \ref{discretization}) are related to the DOFs in the cochain complex of the control mesh $\cR:=\cM^G$, represented in Figure \ref{controlmesh}. 
\begin{figure}
\centering
\includegraphics[width=.5\textwidth]{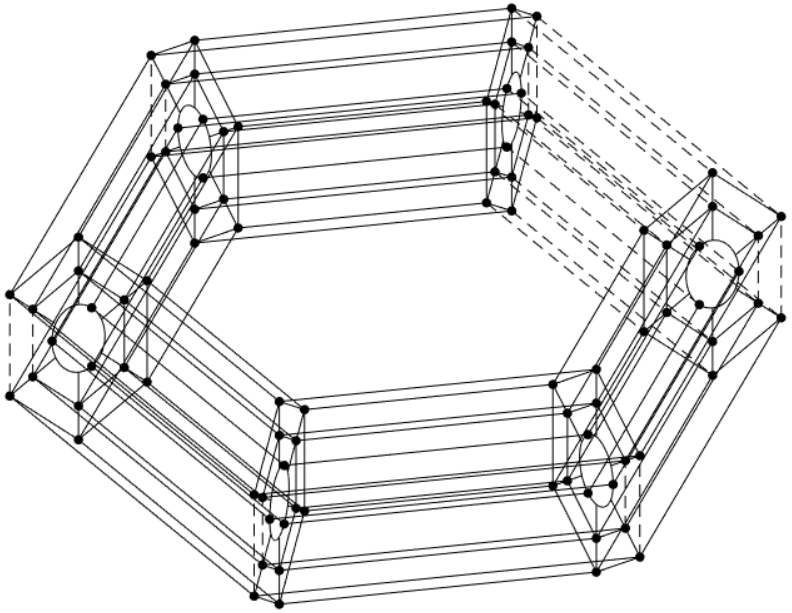}
\caption{Part of the control mesh $\cR$ associated to the geometry map $\pmb{G}$ of Equation \eqref{geometrymap}. We show only the first two rounds of control points in the radial direction for the readability of the figure.}\label{controlmesh}
\end{figure}
$\cR$ is a polygonal ring composed of $n^\varphi$ sides connecting at joints which have the shape of a net made of $n^r-2$ concentric polygons of $n^\theta$ vertices plus three further equidistant vertices lying on a circumference at the center. In particular the DOFs in $V_0$ are linked to the $n_0$ vertices of $\cR$ and the DOFs in the spaces we are going to define in the next sections, $V_1, V_2$ and $V_3$, will be related to the oriented edges, faces and volumes in $\cR$, respectively. 
Figure \ref{dof0} shows this geometric interpretation of the DOFs when taking a function $f \in V_0$ and in $\SSS^{p^r,p^\theta,p^\varphi}$. As we have observed in Remark \ref{linearalgebraremark}, the DOFs in $V_0$ are linked to the DOFs in $\SSS^{p^r,p^\theta,p^\varphi}$ via the transpose of $\bbol{E}^{(0)}$. More precisely, if $f = \m{N}^{(0)} \cdot \m{f} \in V_0$ for a set of coefficients $\{f_\ell\}_{\ell = 1}^{n_0}$ then the corresponding coefficients $\{f_{ijk}\}_{i,j,k = 1}^{n^r, n^\theta, n^\varphi}$ will be
\begin{equation}\label{eq:fijk}
\begin{minipage}{.7\textwidth}
\begin{algorithm}[H]
\For{$k = 1, \ldots, n^\varphi$}{
\For{$j = 1, \ldots, n^\theta$}{
\For{$i = 1, 2$}{
$\displaystyle f_{ijk} = \sum_{\ell=1}^3 \bar{E}_{\ell, (i-1)n^\theta + j} f_{\ell + (k-1)(n^\theta(n^r-2) + 3)}$\;
}
\For{$i = 3, \ldots, n^r$}{
$f_{ijk} = f_{3 + j + (i-3)n^\theta + (k-1)(n^\theta(n^r - 2) + 3)}$\; 
}}}
\end{algorithm}
\end{minipage}
\end{equation}
\begin{figure}
\begin{tikzpicture}[scale=3.5]
\node at (2.55,1.25) {\includegraphics[width=.6\textwidth]{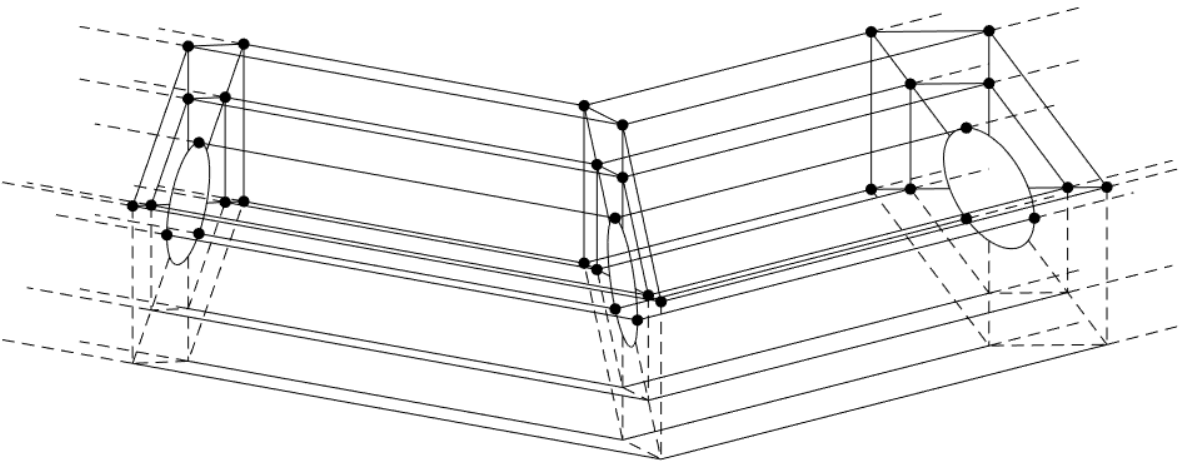}};
\foreach \x in{0,...,4}
{\foreach \y in{0,...,4}{
\fill (\x*.25,\y*.25,1) circle (.02);
\fill (\x*.25,1,\y*.25) circle (.02);
\fill (1,\x*.25,\y*.25) circle (.02);}
\draw (0,\x*.25,1) -- (.75,\x*.25 ,1);
\draw[dashed] (.75,\x*.25 ,1)--(1,\x*.25 ,1);
\draw (\x*.25 ,0,1) -- (\x*.25 ,.75,1);
\draw[dashed] (\x*.25 ,.75,1)--(\x*.25 ,1,1);
\draw (1,\x*.25 ,1) -- (1,\x*.25 ,.25);
\draw[dashed] (1,\x*.25 ,.25)--(1,\x*.25 ,0);
\draw (\x*.25 ,1,1) -- (\x*.25 ,1,.25);
\draw[dashed] (\x*.25 ,1,.25)--(\x*.25 ,1,0);
\draw (1,0,\x*.25 ) -- (1,.75,\x*.25 );
\draw[dashed] (1,.75,\x*.25 )--(1,1,\x*.25 );
\draw (0,1,\x*.25 ) -- (.75,1,\x*.25 );
\draw[dashed] (.75,1,\x*.25 )--(1,1,\x*.25 );
}

\draw (0,0,1) node[below]{{\scriptsize $1,1,1\qquad~~$}};
\draw (.25,0,1) node[below]{{\scriptsize $2,1,1\quad$}};
\draw (.5,0,1) node[below]{{\scriptsize $3,1,1\quad$}};
\draw (.75,0,1) node[below]{{\scriptsize $4,1,1\quad$}};
\draw (1,0,1) node[below]{{\scriptsize $n^r,1,1$}};

\draw (0,.25,1) node[below]{{\scriptsize $1,2,1\qquad~~$}};
\draw (0,.5,1) node[below]{{\scriptsize $1,3,1\qquad~~$}};
\draw (0,.75,1) node[below]{{\scriptsize $1,4,1\qquad~~$}};
\draw (0,1,1) node[below]{{\scriptsize $1,n^\theta,1\qquad~~~$}};

\draw (0,1,.75) node[left]{{\scriptsize $1,n^\theta,2$}};
\draw (0,1,.5) node[left]{{\scriptsize $1,n^\theta,3$}};
\draw (0,1,.25) node[left]{{\scriptsize $1,n^\theta,4$}};
\draw (0,1,0) node[left]{{\scriptsize $1,n^\theta,n^\varphi$}};
\draw[-stealth]  (1.25,1.5) to [in=80,out=180] (.5,1.15);
\node at (.575,1.5) {$(\bbol{E}^{(0)})^T$};
\draw[-stealth] (2.75,.65) to [out=330,in=0] (2.75,.15);
\node at (2.25,.1) {
\begin{tikzpicture}[scale=3]
\draw (.303,.175) -- (0,.35) -- (-.303,.175) -- (-.303,-.175);
\draw[dashed] (-.303,-.175) -- (0,-.35) -- (.303,-.175) -- (.303,.175);
\draw (0,0) circle (.135);
\draw (.519,.3) -- (0,.6) -- (-.519,.3) -- (-.519,-.3);
\draw[dashed] (-.519,-.3) -- (0,-.6) -- (.519,-.3) -- (.519,.3);
\draw (0,.135) -- (0,.6);
\draw[dashed] (0,.7) -- (0,.6);
\draw[dashed] (0,-.135) -- (0,-.7);
\draw (-.519,.3) -- (-.116,.067);
\draw[dashed] (-.596,.345)--(-.519,.3);
\draw (.519,.3) -- (.116,.067);
\draw[dashed] (.596,.345)--(.519,.3);
\draw (-.519,-.3) -- (-.116,-.067);
\draw[dashed] (-.596,-.345)--(-.519,-.3);
\draw[dashed] (.596,-.345) -- (.116,-.067);
\fill (.135,0)node[left]{{\scriptsize 1}} circle (.02);
\fill (-.067, .116)node[below]{{\scriptsize \,\,\,\,2}} circle (.02); 
\fill (-.067, -.116)node[above]{{\scriptsize \,\,\,\,3}} circle (.02);
\fill (-.303,.175)node[above]{{\scriptsize 5}} circle (.02);
\fill (0,.35)node[above]{{\scriptsize \,\,\,\,4}} circle (.02);
\fill (.303,.175)node[right]{{\scriptsize $n^\theta+3$}} circle (.02);
\fill (-.303,-.175)node[below]{{\scriptsize 6}} circle (.02);
\fill (-.519,.3)node[left]{{\scriptsize $n^\theta+5\,\,$}} circle (.02);
\fill (0,.55)node[above]{{\scriptsize $n^\theta+4\qquad\quad$}} circle (.02);
\fill (.519,.3)node[right]{{\scriptsize $2n^\theta+3$}} circle (.02);
\fill (-.519,-.3)node[left]{{\scriptsize $n^\theta+6$}} circle (.02);
\draw (.75,-.55) node{{\scriptsize $+(k-1)(n^\theta(n^r-2)+3)$}};
\end{tikzpicture}
};
\node at (1.85,1.8) {$k-1$};
\node at (2.5,1.65) {$k$};
\node at (3.1,1.8) {$k+1$};
\end{tikzpicture}
\caption{Geometric interpretation of the DOFs of a function in $\SSS^{p^r,p^\theta,p^\varphi}$ (left) and $V_0$ (right). In the former space, the DOFs are associated to the vertices of a tensor mesh. In the latter they are associated to the vertices of a polygonal ring, partially represented here, in which the joints between two sides are nets of concentric polygons. In the middle figure we show the numbering of the vertices corresponding to the DOFs in $V_0$ for the $k$th joint, with $k\in \{1, \ldots, n^\varphi\}$. After the first three vertices in the center, we count anticlockwise the vertices of the concentric polygons from any radial halfline starting at the circumference. When considering a spline in $V_0$, one can get its representation in terms of the basis functions of $\SSS^{p^r,p^\theta,p^\varphi}$ by applying the transpose of $\bbol{E}^{(0)}$ defined in \eqref{N0}.}\label{dof0}
\end{figure}

\subsection{The gradient of functions in $V_0$ and the space $V_1$}
Let $f \in \SSS^{p^r,p^\theta,p^\varphi}$, $f=\m{B}^{(0,0,0)}\cdot \m{f}$ for $\m{f}=(f_1, \ldots, f_{n^rn^\theta n^\varphi})^T$. In order to simplify the notation, let again $f_{ijk}=f_{j+(i-1)n^\theta+(k-1)n^rn^\theta}$. The partial derivatives of $f$ have the following expressions:
$$
\begin{array}{l}
\displaystyle\frac{\partial}{\partial r} f = \sum_{k=1}^{n^\varphi}\sum_{j=1}^{n^\theta}\sum_{i=1}^{n^r-1} (f_{(i+1)jk} - f_{ijk})D_i^{p^r}\per{B}_j^{p^\theta}\per{B}_k^{p^\varphi},\\\\
\displaystyle\frac{\partial}{\partial \theta} f = \sum_{k=1}^{n^\varphi}\sum_{j=1}^{n^\theta}\sum_{i=1}^{n^r} (f_{i(j+1)k} - f_{ijk})B_i^{p^r}\per{D}_j^{p^\theta}\per{B}_k^{p^\varphi},\\\\
\displaystyle\frac{\partial}{\partial \varphi} f = \sum_{k=1}^{n^\varphi}\sum_{j=1}^{n^\theta}\sum_{i=1}^{n^r} (f_{ij(k+1)} - f_{ijk})B_i^{p^r}\per{B}_j^{p^\theta}\per{D}_k^{p^\varphi},
\end{array}
$$
where the functions $D_i^{p^r}, \per{D}_j^{p^\theta}, \per{D}_k^{p^\varphi}$, for any $i,j,k$, have been defined in Section \ref{preliminaries}. Let $\m{B}^{(1,0,0)}, \m{B}^{(0,1,0)}$ and $\m{B}^{(0,0,1)}$ be the vectorizations of $\{D_i^{p^r}\per{B}_j^{p^\theta}\per{B}_k^{p^\varphi}\}_{i,j,k=1}^{n^r-1,n^\theta,n^\varphi}, \{B_i^{p^r}\per{D}_j^{p^\theta}B_k^{p^\varphi}\}_{i,j,k=1}^{n^r,n^\theta,n^\varphi}$ and $\{B_i^{p^r}\per{B}_j^{p^\theta}\per{D}_k^{p^\varphi}\}_{i,j,k=1}^{n^r,n^\theta,n^\varphi}$, respectively, running the indices on $j$, then on $i$ and then on $k$. The gradient operator $\grad: \SSS^{p^r,p^\theta,p^\varphi} \to \SSS^{p^r-1,p^\theta,p^\varphi}\times \SSS^{p^r,p^\theta-1,p^\varphi}\times \SSS^{p^r,p^\theta,p^\varphi-1}$ has the following matrix representation:
$$
\grad f = \left(\begin{array}{c}\m{B}^{(1,0,0)} \cdot \bbol{D}^{(1,0,0)}\m{f}\\\\\m{B}^{(0,1,0)} \cdot \bbol{D}^{(0,1,0)}\m{f}\\\\\m{B}^{(0,0,1)} \cdot \bbol{D}^{(0,0,1)}\m{f}\end{array}\right) \quad \forall\,f \in \SSS^{p^r,p^\theta,p^\varphi},
$$
where $\bbol{D}^{(1,0,0)}, \bbol{D}^{(0,1,0)}$ and $\bbol{D}^{(0,0,1)}$ are matrices of size $(n^r-1)n^\theta n^\varphi\times n^rn^\theta n^\varphi, n^rn^\theta n^\varphi\times n^rn^\theta n^\varphi$ and $n^rn^\theta n^\varphi\times n^rn^\theta n^\varphi$, respectively, defined as
\begin{equation}\label{D100}
\begin{array}{l}
\bbol{D}^{(1,0,0)} = \bbol{I}_{n^\varphi}\otimes \bbol{D}_{n^r}\otimes \bbol{I}_{n^\theta},\\\\
\bbol{D}^{(0,1,0)} = \bbol{I}_{n^\varphi}\otimes \bbol{I}_{n^r} \otimes \per{\bbol{D}}_{n^\theta},\\\\
\bbol{D}^{(0,0,1)} = \per{\bbol{D}}_{n^\varphi} \otimes \bbol{I}_{n^r} \otimes \bbol{I}_{n^\theta},
\end{array}
\end{equation}
with $\bbol{D}_q$ the matrix of size $(q-1)\times q$ whose structure and entries are presented in Equation \eqref{Dn} and $\per{\bbol{D}}_{q}$ is the square matrix of size $q$ whose structure and entries are presented in Equation \eqref{Dnperiodic}. 

We can give a geometric interpretation of the action of the gradient in $\SSS^{p^r,p^\theta,p^\varphi}$.  We have seen that the DOFs in $\SSS^{p^r,p^\theta,p^\varphi}$ and $\SSS^{p^r-1,p^\theta,p^\varphi}\times \SSS^{p^r,p^\theta-1,p^\varphi}\times \SSS^{p^r,p^\theta,p^\varphi-1}$ can be associated respectively to the vertices and the (oriented) edges of a tensor mesh $\cM$. In particular the DOFs in $\im(\grad)$ have form \begin{equation}\label{dof1img}
\left\{\begin{array}{ll}g_{ijk}^1=f_{(i+1)jk}-f_{ijk}&\text{for }i=1, \ldots, n^r-1;~j=1, \ldots, n^\theta;~k=1, \ldots, n^\varphi;\\\\g_{ijk}^2=f_{i(j+1)k}-f_{ijk}&\text{for }i=1, \ldots, n^r;~j=1, \ldots, n^\theta;~k=1, \ldots, n^\varphi;\\\\ g_{ijk}^3=f_{ij(k+1)}-f_{ijk}&\text{for }i=1, \ldots, n^r;~j=1, \ldots, n^\theta;~k=1, \ldots, n^\varphi;\end{array}\right.
\end{equation}
for some set of DOFs in $\SSS^{p^r,p^\theta,p^\varphi}$, $\{f_{ijk}\}_{i,j,k=1}^{n^r,n^\theta,n^\varphi}$. If these latter are assigned to the vertices of $\cM$, then the differences in \eqref{dof1img} identify particular oriented edges on $\cM$. For example $g_{ijk}^1$ is referred to the edge between the vertices $(i,j,k)$ and $(i+1,j,k)$ oriented from the former towards the latter. This interpretation is represented in Figure \ref{tensorgradient}.
\begin{figure}
\centering
\begin{tikzpicture}[scale=3.5]
\foreach \x in{0,...,4}
{\foreach \y in{0,...,4}{
\fill (\x*.25,\y*.25,1) circle (.02);
\fill (\x*.25,1,\y*.25) circle (.02);
\fill (1,\x*.25,\y*.25) circle (.02);}
\draw (0,\x*.25,1) -- (.75,\x*.25 ,1);
\draw[dashed] (.75,\x*.25 ,1)--(1,\x*.25 ,1);
\draw (\x*.25 ,0,1) -- (\x*.25 ,.75,1);
\draw[dashed] (\x*.25 ,.75,1)--(\x*.25 ,1,1);
\draw (1,\x*.25 ,1) -- (1,\x*.25 ,.25);
\draw[dashed] (1,\x*.25 ,.25)--(1,\x*.25 ,0);
\draw (\x*.25 ,1,1) -- (\x*.25 ,1,.25);
\draw[dashed] (\x*.25 ,1,.25)--(\x*.25 ,1,0);
\draw (1,0,\x*.25 ) -- (1,.75,\x*.25 );
\draw[dashed] (1,.75,\x*.25 )--(1,1,\x*.25 );
\draw (0,1,\x*.25 ) -- (.75,1,\x*.25 );
\draw[dashed] (.75,1,\x*.25 )--(1,1,\x*.25 );
}

\draw (0,0,1) node[below]{{\scriptsize $1,1,1\qquad~~$}};
\draw (.25,0,1) node[below]{{\scriptsize $2,1,1\quad$}};
\draw (.5,0,1) node[below]{{\scriptsize $3,1,1\quad$}};
\draw (.75,0,1) node[below]{{\scriptsize $4,1,1\quad$}};
\draw (1,0,1) node[below]{{\scriptsize $n^r,1,1$}};

\draw (0,.25,1) node[below]{{\scriptsize $1,2,1\qquad~~$}};
\draw (0,.5,1) node[below]{{\scriptsize $1,3,1\qquad~~$}};
\draw (0,.75,1) node[below]{{\scriptsize $1,4,1\qquad~~$}};
\draw (0,1,1) node[below]{{\scriptsize $1,n^\theta,1\qquad~~~$}};

\draw (0,1,.75) node[left]{{\scriptsize $1,n^\theta,2$}};
\draw (0,1,.5) node[left]{{\scriptsize $1,n^\theta,3$}};
\draw (0,1,.25) node[left]{{\scriptsize $1,n^\theta,4$}};
\draw (0,1,0) node[left]{{\scriptsize $1,n^\theta,n^\varphi$}};
\draw[-stealth] (1.4,0,1) --node[below]{$r$} (1.7,0,1);
\draw[-stealth] (1.4,0,1) --node[left]{$\theta$} (1.4,.3,1);
\draw[-stealth](1.4,0,1) -- (1.4,0,.5) node[above]{$\varphi$};
\end{tikzpicture}\hspace{-1cm}
\begin{tikzpicture}[scale=3]
\draw[-stealth] (0,.8) --node[above]{${\footnotesize\begin{array}{c}\textcolor{red}{\bbol{D}^{(1,0,0)}}\\\textcolor{green!50!black}{\bbol{D}^{(0,1,0)}}\\ \textcolor{blue}{\bbol{D}^{(0,0,1)}} \end{array}}$} (.4,.8);
\draw[white] (0,0) -- (.4,0);
\end{tikzpicture}\hspace{-.5cm}
\begin{tikzpicture}[scale=3.5]
\foreach \x in{0,...,4}
{\foreach \y in{0,...,2}{
   \draw[red,->-] (\y*.25,\x*.25,1) -- (\y*.25+.25,\x*.25 ,1);
   \draw[red,->-] (\y*.25,1,\x*.25 ) -- (\y*.25+.25,1,\x*.25 );
}
\draw[red,dashed] (.75,\x*.25 ,1)--(1,\x*.25 ,1);
\draw[red,dashed] (.75,1,\x*.25)--(1,1,\x*.25);
}
\foreach \x in{0,...,4}
{\foreach \y in{0,...,2}{
\draw[green!50!black,->-] (\x*.25 ,\y*.25,1) -- (\x*.25 ,\y*.25+.25,1);
\draw[green!50!black,->-] (1,\y*.25,\x*.25 ) -- (1,\y*.25+.25,\x*.25 );
\draw[green!50!black,->-] (\x*.25,1,\y*.25 ) -- (\x*.25,1.25,\y*.25 );
\draw[green!50!black,->-] (\x*.25,1,\y*.25+.5) -- (\x*.25,1.25,\y*.25+.5);
}
\draw[green!50!black,dashed] (\x*.25 ,.75,1)--(\x*.25 ,1,1);
\draw[green!50!black,dashed] (1 ,.75,\x*.25)--(1,1,\x*.25);
}
\foreach \x in{0,...,4}
{\foreach \y in{-1,1,2,3}{
    \draw[blue,->-] (1,\x*.25 ,\y*.25+.25) -- (1,\x*.25 ,\y*.25);
    \draw[blue,->-] (\x*.25 ,1,\y*.25+.25) -- (\x*.25 ,1,\y*.25);
    }
\draw[blue,dashed] (\x*.25,1,0) -- (\x*.25,1,.25);    
\draw[blue,dashed] (1,\x*.25,0) -- (1,\x*.25,.25);
    }
\draw[red] (.125,0,1) node[below]{{\scriptsize $1,1,1$}};
\draw[red] (.375,0,1) node[below]{{\scriptsize $2,1,1$}};
\draw[red] (.625,0,1) node[below]{{\scriptsize $3,1,1$}};

\draw[green!50!black] (0,.125,1) node[left]{{\scriptsize $\hspace{-.05cm}1,1,1$}};
\draw[green!50!black] (0,.375,1) node[left]{{\scriptsize $\hspace{-.05cm}1,2,1$}};
\draw[green!50!black] (0,.625,1) node[left]{{\scriptsize $\hspace{-.05cm}1,3,1$}};
\draw[green!50!black] (0,1.125,1) node[left]{{\scriptsize $\hspace{-.05cm}1,n^\theta,1$}};

\draw[blue] (1,0,.825) node[right]{{\scriptsize $n^r,1,1~$}};
\draw[blue] (1,0,.6) node[right]{{\scriptsize $n^r,1,2~$}};
\draw[blue] (1,0,.325) node[right]{{\scriptsize $n^r,1,3~$}};
\draw[blue] (1,0,-.125) node[right]{{\scriptsize $n^r,1,n^\varphi$}};
\end{tikzpicture}
\caption{The action of the gradient operator when associating the DOFs in $\SSS^{p^r,p^\theta,p^\varphi}$ to the vertices of a tensor mesh $\cM$. In the gradient space $\SSS^{p^r-1,p^\theta,p^\varphi}\times \SSS^{p^r,p^\theta-1,p^\varphi}\times \SSS^{p^r,p^\theta,p^\varphi-1}$ the DOFs correspond to oriented edges. In particular in $\im(\grad)$ the first components of the DOFs correspond to the edges in the $r$ direction, the second components to the edges in the $\theta$ direction and the third components to the edges in the $\varphi$ direction. Because of the periodicity of $\SSS^{p^\theta}$ and $\SSS^{p^\varphi}$, there should be oriented edges connecting the top side with the bottom side of $\cM$ and the back side with the front side of $\cM$. These are instead represented in the right figure as arrows going outward of $\cM$.}\label{tensorgradient}
\end{figure}

Let now $f \in V_0$, $f=\m{N}^{(0)} \cdot \m{f}$ for $\m{f} = (f_1, \ldots, f_{n_0})^T$. In order to simplify the notation in what follows, let $\bar{n}_0 := \frac{n_0}{n^\varphi} = n^\theta(n^r-2)+3$ and $\bar{n}_1 \defeq 2(\bar{n}_0 -2) = 2n^\theta(n^r-2)+2$. We want the action of the gradient on the DOFs in $V_0$ to mimic the action we had on the DOFs in $\SSS^{p^r,p^\theta,p^\varphi}$. This time the collection $\{f_\ell\}_{\ell=1}^{n_0}$ corresponds to the vertices of the polygonal ring $\cR$ introduced in Section \ref{geometricinterpretation}. The action of the gradient should then associated them to oriented edges in $\cR$, as shown in Figure \ref{polargradient}. The numbers of vertices and edges in a joint of $\cR$ are $\bar{n}_0$ and $\bar{n}_1$ respectively. Hence, the total number of edges in $\cR$ is equal to $n_1\defeq n^\varphi(\bar{n}_0 + \bar{n}_1)=n^\varphi(3n^\theta(n^r-2)+5)$, as each vertex in a joint corresponds to an edge of $\cR$ connecting such joint to the next joint.  The degrees of freedom in $V_1$ have been ordered as follows. First we have all those associated to edges in the joints of $\cR$ and then those associated to edges in the sides of $\cR$. For each joint, first we consider the two edges in the central circumference, then we count all the radial edges and then all the poloidal edges anti-clockwise, see again Figure \ref{polargradient}. Instead, the edges in the sides of $\cR$ are ordered as the vertices in $V_0$. 
\begin{figure}
\centering
\scalebox{.9}{
\begin{tikzpicture}[scale=3]
\node at (0,1.75) {\includegraphics[width=.5\textwidth]{dof0}};
\node at (3,1.75) {\includegraphics[width=.5\textwidth]{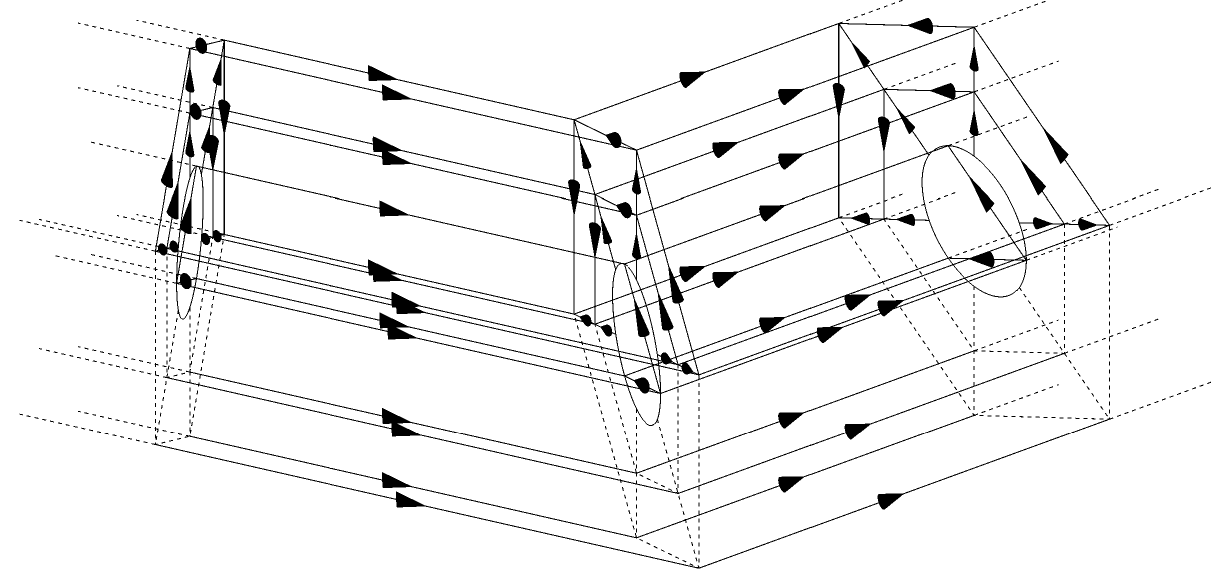}};
\node at (-.75,2.3) {$k-1$};
\node at (0,2.175) {$k$};
\node at (.55,2.3) {$k+1$};
\node at (2.2,2.4) {$k-1$};
\node at (2.925,2.225) {$k$};
\node at (3.45,2.4) {$k+1$};
\draw[-stealth] (1,2.3) to [out=45,in=135] (2,2.3);
\node at (1.5,2.4) {$\bbol{D}^{(0)}$};
\node at (-.5,-1) {
\begin{tikzpicture}[scale=4]
\draw (.303,.175) -- (0,.35) -- (-.303,.175) -- (-.303,-.175);
\draw[dashed] (-.303,-.175) -- (0,-.35) -- (.303,-.175) -- (.303,.175);
\draw (0,0) circle (.135);
\draw (.519,.3) -- (0,.6) -- (-.519,.3) -- (-.519,-.3);
\draw[dashed] (-.519,-.3) -- (0,-.6) -- (.519,-.3) -- (.519,.3);
\draw (0,.135) -- (0,.6);
\draw[dashed] (0,.7) -- (0,.6);
\draw[dashed] (0,-.135) -- (0,-.7);
\draw (-.519,.3) -- (-.116,.067);
\draw[dashed] (-.596,.345)--(-.519,.3);
\draw (.519,.3) -- (.116,.067);
\draw[dashed] (.596,.345)--(.519,.3);
\draw (-.519,-.3) -- (-.116,-.067);
\draw[dashed] (-.596,-.345)--(-.519,-.3);
\draw[dashed] (.596,-.345) -- (.116,-.067);
\fill (.135,0)node[left]{{\scriptsize 1}} circle (.02);
\fill (-.067, .116)node[below]{{\scriptsize \,\,\,\,2}} circle (.02); 
\fill (-.067, -.116)node[above]{{\scriptsize \,\,\,\,3}} circle (.02);
\fill (-.303,.175)node[above]{{\scriptsize 5}} circle (.02);
\fill (0,.35)node[above]{{\scriptsize \,\,\,\,4}} circle (.02);
\fill (.303,.175)node[right]{{\scriptsize $n^\theta+3$}} circle (.02);
\fill (-.303,-.175)node[below]{{\scriptsize 6}} circle (.02);
\fill (-.519,.3)node[left]{{\scriptsize $n^\theta+5\,\,$}} circle (.02);
\fill (0,.55)node[above]{{\scriptsize $n^\theta+4\qquad\quad$}} circle (.02);
\fill (.519,.3)node[right]{{\scriptsize $2n^\theta+3$}} circle (.02);
\fill (-.519,-.3)node[left]{{\scriptsize $n^\theta+6$}} circle (.02);
\draw (.5,-.55) node{{\scriptsize $+(k-1)\bar{n}_0$}};
\end{tikzpicture}
};
\node at (2.5,0) {
\begin{tikzpicture}[scale=4]
\draw[->-] (.135,0) -- (-.067,.116);
\draw[->-] (.135,0) -- (-.067,-.116);
\draw (0,.04) node{{\scriptsize 1}};
\draw (0,.-.04) node{{\scriptsize 2}};
\draw (0,0) circle (.135);
\draw (-.303,.175) -- (0,.35);
\draw (0,.35) -- (.303,.175);
\draw (-.303,-.175) -- (-.303,.175);
\draw[dashed] (-.303,-.175) -- (0,-.35) -- (.303,-.175) -- (.303,.175);
\draw (-.519,.3) -- (0,.6);
\draw (0,.6) -- (.519,.3);
\draw (-.519,-.3) -- (-.519,.3);
\draw[dashed] (-.519,-.3) -- (0,-.6) -- (.519,-.3) -- (.519,.3);
\draw[->-] (0,.135) --node[right]{{\scriptsize 3}} (0,.35);
\draw[->-] (0,.35) --node[right, yshift = -0.15cm]{{\scriptsize $n^\theta + 3$}} (0,.6);
\draw[dashed] (0,.7) -- (0,.6);
\draw[dashed] (0,-.135) -- (0,-.7);
\draw[->-] (-.116,.067) --node[yshift = 0.15cm, xshift = 0.15cm]{{\scriptsize 4}} (-.303,.175);
\draw[->-] (-.303,.175) --node[yshift = 0.15cm, xshift = 0.5cm]{{\scriptsize $n^\theta + 4$}} (-.519,.3);
\draw[dashed] (-.596,.345)--(-.519,.3);
\draw[->-] (.116,.067) --node[yshift = -0.15cm, xshift = 0.25cm]{{\scriptsize $n^\theta + 2$}} (.303,.175);
\draw[->-] (.303,.175) --node[yshift = -0.25cm, xshift = 0.25cm]{{\scriptsize $2n^\theta + 2$}}  (.519,.3);
\draw[dashed] (.596,.345)--(.519,.3);
\draw[->-] (-.116,-.067) --node[yshift = 0.25cm]{{\scriptsize 5}} (-.303,-.175);
\draw[->-] (-.303,-.175) --node[yshift = 0.35cm]{{\scriptsize $n^\theta + 5$}} (-.519,-.3);
\draw[dashed] (-.596,-.345)--(-.519,-.3);
\draw[dashed] (.596,-.345) -- (.116,-.067);
\node at (.425,-.65) {{\scriptsize $+(k-1)\bar{n}_1$}};
\end{tikzpicture}\hspace{.5cm}
\begin{tikzpicture}[scale=4]
\draw (0,0) circle (.135);
\draw[->-] (0,.35) -- node[xshift = -0.25, yshift = 0.25cm]{{\scriptsize\rotatebox{30}{2}}}(-.303,.175);
\draw[->-] (.303,.175) --node[xshift = 0.25, yshift =  0.25cm]{{\scriptsize \rotatebox{-30}{1}}} (0,.35);
\draw[->-] (-.303,.175) --node[left]{{\scriptsize \rotatebox{90}{3}}} (-.303,-.175);
\draw[dashed] (-.303,-.175) -- (0,-.35) -- (.303,-.175) -- (.303,.175);
\draw[->-]  (0,.6) --node[xshift = -0.25, yshift = 0.25cm]{{\scriptsize\rotatebox{30}{$n^\theta + 2$}}} (-.519,.3);
\draw[->-]  (.519,.3) --node[xshift = 0.25, yshift =  0.25cm]{{\scriptsize \rotatebox{-30}{$n^\theta + 1$}}} (0,.6);
\draw[->-] (-.519,.3) --node[left]{{\scriptsize \rotatebox{90}{$n^\theta + 3$}}} (-.519,-.3);
\draw[dashed] (-.519,-.3) -- (0,-.6) -- (.519,-.3) -- (.519,.3);
\draw (0,.135) -- (0,.35);
\draw (0,.35) -- (0,.6);
\draw[dashed] (0,.7) -- (0,.6);
\draw[dashed] (0,-.135) -- (0,-.7);
\draw (-.116,.067) -- (-.303,.175);
\draw (-.303,.175) -- (-.519,.3);
\draw[dashed] (-.596,.345)--(-.519,.3);
\draw (.116,.067) -- (.303,.175);
\draw (.303,.175) --  (.519,.3);
\draw[dashed] (.596,.345)--(.519,.3);
\draw (-.116,-.067) -- (-.303,-.175);
\draw (-.303,-.175) -- (-.519,-.3);
\draw[dashed] (-.596,-.345)--(-.519,-.3);
\draw[dashed] (.596,-.345) -- (.116,-.067);
\node at (.425,-.55) {{\scriptsize $+(k-1)\bar{n}_1$}};
\node at (.425,-.65) {{\scriptsize $+2 + (n^r - 2)n^\theta$}};
\end{tikzpicture}
};
\node at (2.5, -2) {
\begin{tikzpicture}[scale=4]
\draw (.303,.175) -- (0,.35) -- (-.303,.175) -- (-.303,-.175);
\draw[dashed] (-.303,-.175) -- (0,-.35) -- (.303,-.175) -- (.303,.175);
\draw (0,0) circle (.135);
\draw (.519,.3) -- (0,.6) -- (-.519,.3) -- (-.519,-.3);
\draw[dashed] (-.519,-.3) -- (0,-.6) -- (.519,-.3) -- (.519,.3);
\draw (0,.135) -- (0,.6);
\draw[dashed] (0,.7) -- (0,.6);
\draw[dashed] (0,-.135) -- (0,-.7);
\draw (-.519,.3) -- (-.116,.067);
\draw[dashed] (-.596,.345)--(-.519,.3);
\draw (.519,.3) -- (.116,.067);
\draw[dashed] (.596,.345)--(.519,.3);
\draw (-.519,-.3) -- (-.116,-.067);
\draw[dashed] (-.596,-.345)--(-.519,-.3);
\draw[dashed] (.596,-.345) -- (.116,-.067);
\fill (.135,0)node[left]{{\scriptsize 1}} circle (.02);
\fill (-.067, .116)node[below]{{\scriptsize \,\,\,\,2}} circle (.02); 
\fill (-.067, -.116)node[above]{{\scriptsize \,\,\,\,3}} circle (.02);
\fill (-.303,.175)node[above]{{\scriptsize 5}} circle (.02);
\fill (0,.35)node[above]{{\scriptsize \,\,\,\,4}} circle (.02);
\fill (.303,.175)node[right]{{\scriptsize $n^\theta+3$}} circle (.02);
\fill (-.303,-.175)node[below]{{\scriptsize 6}} circle (.02);
\fill (-.519,.3)node[left]{{\scriptsize $n^\theta+5\,\,$}} circle (.02);
\fill (0,.596)node[above]{{\scriptsize $n^\theta+4\qquad\quad$}} circle (.02);
\fill (.519,.3)node[right]{{\scriptsize $2n^\theta+3$}} circle (.02);
\fill (-.519,-.3)node[left]{{\scriptsize $n^\theta+6$}} circle (.02);
\node at (.425,-.55) {{\scriptsize $+n^\varphi\bar{n}_1$}};
\node at (.425,-.65) {{\scriptsize $+(k-1)\bar{n}_0$}};
\end{tikzpicture}
};
\draw[-stealth] (3.1, 1.2) to [out=-100, in=45] (1.75, 0.75); 
\draw[-stealth] (3.1, 1.2) to [out=-80, in=140] (3.3, 0.85);
\draw[-stealth] plot [smooth] coordinates {(3.1, 1.2) (4.35, 0.5) (4.25, -1.75) (3.25, -2)};
\draw[-stealth] (0.25, -1) to [out=0, in=225]node[above]{$\bbol{D}_J^{(0)}$} (1.25, -0.75);
\draw[-stealth] (0.25, -1) to [out=0, in=180]node[below]{$\bbol{I}_{\bar{n}_0}$} (1.75, -2);
\draw[-stealth] (0.25, -1) to [out=0, in=245]node[below]{$\bbol{D}_J^{(0)}$} (2.75, -0.5);
\draw[-stealth] (0.1, 1.2) to [out=225, in=90] (-0.5, 0);
\end{tikzpicture}}
\caption{The action of the gradient operator when associating the DOFs in $V_0$ to the vertices of the polygonal ring $\cR$. In the gradient space the DOFs correspond to the oriented edges. On the top of the figure we see part of $\cR$. On the bottom we see the numbering of the vertices (left) and of the edges (right) when considering the $k$th joint in $\cR$, for $k \in \{1, \ldots, n^\varphi\}$. Note that the edges running around the hole of $\cR$ are represented as dots in the lower-right figure when looking at a joint. $\bbol{D}^{(0)}$ is a matrix mapping the vertices in $\cR$ to the oriented edges in $\cR$ and encodes the geometric interpretation of the gradient action on the DOFs. In particular the blocks $\bbol{D}_J^{(0)}$ of $\bbol{D}^{(0)}$ provide the relation between the vertices and the radial and poloidal edges in a joint. Instead the identity blocks $\bbol{I}_{\bar{n}_0}$ in $\bbol{D}^{(0)}$ map the vertices in the edges in the sides of $\cR$.}\label{polargradient}
\end{figure}
The gradient action on the DOFs of $V_0$ is therefore encoded by the matrix $\bbol{D}^{(0)}$ of size $n_1\times n_0$ with the following block structure
\begin{equation}\label{eq:D0}
\bbol{D}^{(0)} \defeq 
\left[\begin{array}{c}
\bbol{I}_{n^\varphi} \otimes \bbol{D}_J^{(0)}\\
\per{\bbol{D}}_{n^\varphi} \otimes \bbol{I}_{\bar{n}_0} \end{array}\right]
\end{equation}
where the upper block provides the DOFs related to edges in the joints of $\cR$ and the lower block the DOFs related to edges in the sides of $\cR$. The $J$ subscript in $\bbol{D}_J^{(0)}$ indicates that this matrix will provide the DOFs in a joint. This matrix, identified in \cite[Equations (78)-(84)]{deepesh}, has size $\bar{n}_1 \times \bar{n}_0$ and the following structure and entries
\begin{equation*}
\bbol{D}_J^{(0)} \defeq \raisebox{-2cm}{\begin{tikzpicture}
\draw (0, 0) rectangle (3, 4);
\draw[step = 1] (0, 2) grid (2, 4);
\draw (1,1) -- (3,1);
\draw (1,0) -- (1,2);
\draw (2, 2) -- (3, 2);
\node at (0.45, 3.75) {{\fontsize{6}{6} \selectfont -1$\,\,\,$1$\,\,\,$0}};
\node at (0.45, 3.25) {{\fontsize{6}{6} \selectfont -1$\,\,\,$0$\,\,\,$1}};
\node at (1.5, 3.5) {\Large{$\circ$}};
\node at (0.5, 2.5) {\scalebox{0.75}{$-(\bar{\bbol{E}}^R)^T$}};
\node at (1.5, 2.5) {$\bbol{I}_{n^\theta}$};
\node at (2.5, 3) {\Large{$\circ$}};
\node at (0.5, 1) {\Large{$\circ$}};
\node at (2, 1.5) {$\bbol{D}_{n^r-2} \otimes \bbol{I}_{n^\theta}$};
\node at (2, 0.5) {$\bbol{I}_{n^r-2} \otimes \per{\bbol{D}}_{n^\theta}$};
\draw[|-|] (0,4.2) --node[above]{3} (1,4.2);
\draw[-|] (1,4.2) --node[above]{$n^\theta(n^r-2)$} (3, 4.2);
\draw[|-|] (3.2, 4) --node[right]{2} (3.2, 3);
\draw[-|] (3.2, 3) --node[right]{$n^\theta$} (3.2, 2);
\draw[-|] (3.2, 2) --node[right]{$n^\theta(n^r-3)$} (3.2, 1);
\draw[-|] (3.2, 1) --node[right]{$n^\theta(n^r-2)$} (3.2, 0);
\end{tikzpicture}}
\end{equation*} 
with $\bar{\bbol{E}}^R$ the right part of $\bar{\bbol{E}}$ reported in Equation \eqref{E0}, that is, the matrix whose entries are $\bar{E}_{\ell, n^\theta + j}$ for $\ell = 1, 2, 3$ and $j = 1, \ldots, n^\theta$.

When applied to $\m{f}$, $\bbol{D}^{(0)}$ provides a vector $\m{g}=\bbol{D}^{(0)}\m{f}$ with the entries $\{g_\ell\}_{\ell=1}^{n_1}$ given by
\begin{equation}\label{eq:gl}
\begin{minipage}{.925\textwidth}
\begin{algorithm}[H]
\For{$k = 1, \ldots, n^\varphi$}{
$g_{1 + (k-1)\bar{n}_1} = f_{2 + (k-1)\bar{n}_0} - f_{1 + (k-1)\bar{n}_0}$\;
$g_{2 + (k-1)\bar{n}_1} = f_{3 + (k-1)\bar{n}_0} - f_{1 + (k-1)\bar{n}_0}$\;
\For{$j = 1, \ldots, n^\theta$}{
$\displaystyle g_{2 + j + (k-1)\bar{n}_1} = f_{3 + j + (k-1)\bar{n}_0} - \sum_{\ell=1}^3 \bar{E}_{\ell,n^\theta + j}f_{\ell + (k-1)\bar{n}_0}$\;
\For{$i = 3, \ldots, n^r -1$}{
$g_{2 + j + (i-2)n^\theta + (k-1) \bar{n}_1} = f_{3 + j + (i-2)n^\theta + (k-1)\bar{n}_0} - f_{3 + j + (i-3)n^\theta + (k-1)\bar{n}_0}$\;
$g_{2 + (n^r-2)n^\theta + j + (i-3)n^\theta + (k-1)\bar{n}_1} = f_{3 + (j + 1) + (i-3)n^\theta + (k-1)\bar{n}_0} -  f_{3 + j+ (i-3)n^\theta + (k-1)\bar{n}_0}$\;
}
$g_{2 + (n^r-2)n^\theta + j + (n^r-3)n^\theta + (k-1)\bar{n}_1} = f_{3 + (j + 1) + (n^r-3)n^\theta + (k-1)\bar{n}_0} -  f_{3 + j+ (n^r-3)n^\theta + (k-1)\bar{n}_0}$\;
}
\For{$\ell = 1, \ldots, \bar{n}_0$}{
$g_{n^\varphi\bar{n}_1 + \ell + (k-1)\bar{n}_0} = f_{\ell + k\bar{n}_0} - f_{\ell + (k-1)\bar{n}_0}$\;
}
}
\end{algorithm}
\end{minipage}
\end{equation}
All of such entries are differences of two DOFs in $V_0$, corresponding to the endpoints of an edge in $\cR$, except those whose geometric interpretation forms the first round of radial edges in a joint of $\cR$ from the central circumference, namely $g_{2+j+(k-1)\bar{n}_1}$ for $j=1, \ldots, n^\theta$, for which a suitable combination of the three vertices $f_{\ell+(k-1)\bar{n}_0}$ for $\ell=1,2,3$ is invoked. 

The extraction matrix definig the basis of the space $V_1$ from the basis of $ \SSS^{p^r-1,p^\theta,p^\varphi}\times\SSS^{p^r,p^\theta-1,p^\varphi}\times\SSS^{p^r,p^\theta,p^\varphi-1}$ is the following
\begin{equation}\label{eq:E1}
\bbol{E}^{(1)} \defeq \begin{bmatrix} \bbol{I}_{n^\varphi} \otimes \bbol{E}_J^{(1), L} & \bbol{I}_{n^\varphi} \otimes \bbol{E}_J^{(1), R} & \scalebox{1.5}{$\circ$}\\\scalebox{1.5}{$\circ$} & \scalebox{1.5}{$\circ$} & \bbol{I}_{n^\varphi} \otimes \bbol{E}\end{bmatrix} = \begin{bmatrix} \bbol{I}_{n^\varphi} \otimes \bbol{E}_J^{(1), L} & \bbol{I}_{n^\varphi} \otimes \bbol{E}_J^{(1), R} & \scalebox{1.5}{$\circ$}\\\scalebox{1.5}{$\circ$} & \scalebox{1.5}{$\circ$} & \bbol{E}^{(0)}\end{bmatrix}
\end{equation}
with $\bbol{E}_J^{(1), L}, \bbol{E}_J^{(1), R}$ the following matrices of sizes $\bar{n}_1 \times (n^r-1)n^\theta$, $\bar{n}_1 \times n^rn^\theta$, respectively, appearing as left and right blocks of matrix $\bbol{E}_J^{(1)} \defeq \begin{bmatrix}\bbol{E}_J^{(1), L} & \bbol{E}_J^{(1), R}\end{bmatrix}$ identified in \cite[Equations (78)-(84)]{deepesh}, which provides the relations between DOFs associated to edges of the Greville tensor mesh in the $r$ and $\theta$ directions, respectively, i.e., $\{g_{ijk}^1\}_{i,j = 1}^{n^r - 1, n^\theta}$ and $\{g_{ijk}^2\}_{i,j=1}^{n^r, n^\theta}$ for fixed $k$, and the edges in the $k$th joint of $\cR$, 
\begin{equation}\label{eq:EJ1}
\bbol{E}_J^{(1), L} \defeq \raisebox{-1.5cm}{
\begin{tikzpicture}
\draw (0, 0) rectangle (4, 3);
\draw (0, 0) rectangle (2, 2);
\draw (2, 1) -- (4, 1);
\draw (2, 2) -- (2, 3);
\draw (2, 2) -- (4, 2);
\node at (1, 1) {{\Huge$\circ$}};
\node at (3, 0.5) {{\Huge$\circ$}};
\node at (3, 1.5) {$\bbol{I}_{(n^r-2)n^\theta}$};
\node at (3, 2.5) {{\Huge$\circ$}};
\node at (1, 2.5) {$\bar{\bbol{E}}^{(1), L}$};
\draw[|-|] (0, 3.2) --node[above]{$n^\theta$} (2, 3.2);
\draw[-|] (2, 3.2) --node[above]{$(n^r-2)n^\theta$} (4, 3.2);
\draw[|-|] (4.2, 3) --node[right]{$2$} (4.2, 2);
\draw[-|] (4.2, 2) --node[right]{$(n^r-2)n^\theta$} (4.2, 1);
\draw[-|] (4.2, 1) --node[right]{$(n^r-2)n^\theta$} (4.2, 0);
\end{tikzpicture}}\quad
\bbol{E}_J^{(1), R} \defeq \raisebox{-1.5cm}{
\begin{tikzpicture}
\draw (0, 0) rectangle (4, 3);
\draw (0, 0) rectangle (2, 2);
\draw (2, 1) -- (4, 1);
\draw (2, 2) -- (2, 3);
\draw (2, 2) -- (4, 2);
\node at (1, 1) {{\Huge$\circ$}};
\node at (3, 0.5) {$\bbol{I}_{(n^r-2)n^\theta}$};
\node at (3, 1.5) {{\Huge$\circ$}};
\node at (3, 2.5) {{\Huge$\circ$}};
\node at (1, 2.5) {$\bar{\bbol{E}}^{(1), R}$};
\draw[|-|] (0, 3.2) --node[above]{$2n^\theta$} (2, 3.2);
\draw[-|] (2, 3.2) --node[above]{$(n^r-2)n^\theta$} (4, 3.2);
\draw[|-|] (4.2, 3) --node[right]{$2$} (4.2, 2);
\draw[-|] (4.2, 2) --node[right]{$(n^r-2)n^\theta$} (4.2, 1);
\draw[-|] (4.2, 1) --node[right]{$(n^r-2)n^\theta$} (4.2, 0);
\end{tikzpicture}}
\end{equation}
$\bar{\bbol{E}}^{(1), L}$ and $\bar{\bbol{E}}^{(1), R}$ are matrices $2 \times n^\theta$ and $2 \times 2n^\theta$ respectively, with entries
$$
\begin{array}{l}
\bar{E}^{(1), L}_{\ell,j} = \bar{E}_{1 + \ell,n^\theta + j} - \bar{E}_{1+\ell,j}\\\\
\bar{E}^{(1), R}_{\ell, j} =  \bar{E}_{1 + \ell,j+1} - \bar{E}_{1+\ell,j} = 0\\\\
\bar{E}^{(1), R}_{\ell,n^\theta + j} = \bar{E}_{1 + \ell,n^\theta + j+1} - \bar{E}_{1+\ell,n^\theta + j}
\end{array}
\quad \ell = 1, 2; ~j = 1, \ldots, n^\theta,
$$
that is, suitable differences of entries of $\bar{\bbol{E}}$.

Matrix $(\bbol{E}^{(1)})^T$ will provide the relation between the coefficients of splines in $V_1$ and $\SSS^{p^r-1,p^\theta,p^\varphi}\times \SSS^{p^r,p^\theta-1,p^\varphi}\times \SSS^{p^r,p^\theta,p^\varphi-1}$, according to Remark \ref{linearalgebraremark}. Namely, given a set of coefficients in $V_1$, $\m{g}=\{g_\ell\}_{\ell=1}^{n_1}$, we will have that 
\begin{equation}\label{dof1relation}
\begin{bmatrix}
\m{g}^1 \\\\ \m{g}^2 \\\\ \m{g}^3
\end{bmatrix} = (\bbol{E}^{(1)})^T \m{g}
\end{equation} 
where $\m{g}^1, \m{g}^2, \m{g}^3$ are the vectorizations of the coefficients in the first, second and third components of an element $\pmb{g}$ in $\SSS^{p^r-1,p^\theta,p^\varphi}\times \SSS^{p^r,p^\theta-1,p^\varphi}\times \SSS^{p^r,p^\theta,p^\varphi-1}$, i.e.,
$$
\pmb{g} = \left(\begin{array}{c}
\m{B}^{(1,0,0)} \cdot \m{g}^1\\\\\m{B}^{(0,1,0)}\cdot \m{g}^2\\\\\m{B}^{(0,0,1)}\cdot \m{g}^3
\end{array}\right) = \left(\begin{array}{c}g^1\\\\g^2\\\\g^3\end{array}\right) \in \SSS^{p^r-1,p^\theta,p^\varphi}\times \SSS^{p^r,p^\theta-1,p^\varphi}\times \SSS^{p^r,p^\theta,p^\varphi-1}.
$$
More precisely, we have
\begin{equation}\label{eq:gijk}
\begin{minipage}{.925\textwidth}
\begin{algorithm}[H]
\For{$k = 1, \ldots, n^\varphi$}{
\For{$j = 1, \ldots, n^\theta$}{
$\begin{array}{ll} \displaystyle g_{1jk}^1 = \sum_{\ell = 1}^2 \bar{E}_{\ell,j}^{(1), L} g_{\ell + (k - 1)\bar{n}_1}; & \\
\displaystyle g_{1jk}^2 = \sum_{\ell = 1}^2 \bar{E}_{\ell,j}^{(1), R} g_{\ell +  (k - 1)\bar{n}_1} = 0; & \displaystyle g_{2jk}^2 = \sum_{\ell = 1}^2 \bar{E}_{\ell,n^\theta + j}^{(1), R} g_{\ell + (k - 1)\bar{n}_1};\\
\displaystyle g_{1jk}^3 = \sum_{\ell = 1}^3 \bar{E}_{\ell, j} g_{n^\varphi\bar{n}_1 + \ell + (k - 1)\bar{n}_0}; & \displaystyle g_{2jk}^3 = \sum_{\ell = 1}^3 \bar{E}_{\ell, n^\theta + j} g_{n^\varphi\bar{n}_1 + \ell + (k - 1)\bar{n}_0};\end{array}$\\
\For{$i = 3, \ldots, n^r$}{
$g_{(i-1)jk}^1 = g_{2 + j + (i - 3)n^\theta + (k-1)\bar{n}_1}$\;
$g_{ijk}^2 = g_{2 + (n^r - 2)n^\theta + j + (i - 3)n^\theta + (k - 1)\bar{n}_1}$\;
$g_{ijk}^3 = g_{n^\varphi\bar{n}_1 + 3 + j + (i - 3)n^\theta + (k - 1)\bar{n}_0}$\;
}}}
\end{algorithm}
\end{minipage}
\end{equation}
The geometric interpretation of this link between the DOFs in $V_1$ and in $\SSS^{p^r-1,p^\theta,p^\varphi}\times \SSS^{p^r,p^\theta-1,p^\varphi}\times \SSS^{p^r,p^\theta,p^\varphi-1}$ is pictured in Figure \ref{dof1}.
\begin{figure}
\centering
\scalebox{.9}{
\begin{tikzpicture}[scale=3.5]
\node at (2.15,1.25) {\includegraphics[width=.55\textwidth]{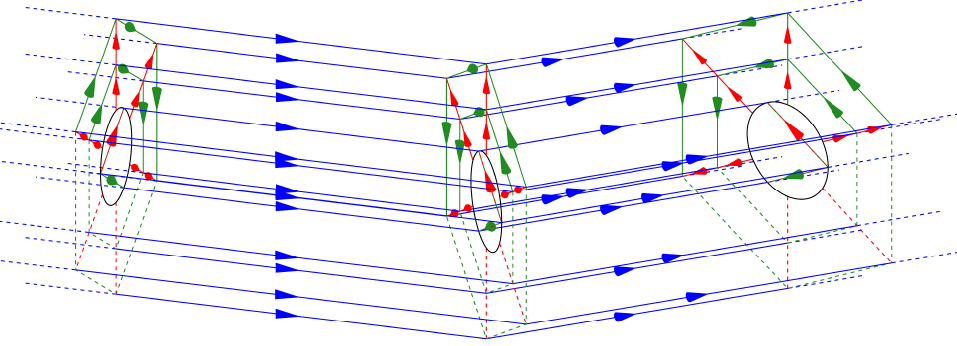}};
\node at (-0.5, -1.15) {
\begin{tikzpicture}[scale=3.5]
\foreach \x in{0,...,4}
{\foreach \y in{0,...,2}{
   \draw[red,->-] (\y*.25,\x*.25,1) -- (\y*.25+.25,\x*.25 ,1);
   \draw[red,->-] (\y*.25,1,\x*.25 ) -- (\y*.25+.25,1,\x*.25 );
}
\draw[red,dashed] (.75,\x*.25 ,1)--(1,\x*.25 ,1);
\draw[red,dashed] (.75,1,\x*.25)--(1,1,\x*.25);
}
\foreach \x in{0,...,4}
{\foreach \y in{0,...,2}{
\draw[green!50!black,->-] (\x*.25 ,\y*.25,1) -- (\x*.25 ,\y*.25+.25,1);
\draw[green!50!black,->-] (1,\y*.25,\x*.25 ) -- (1,\y*.25+.25,\x*.25 );
\draw[green!50!black,->-] (\x*.25,1,\y*.25 ) -- (\x*.25,1.25,\y*.25 );
\draw[green!50!black,->-] (\x*.25,1,\y*.25+.5) -- (\x*.25,1.25,\y*.25+.5);
}
\draw[green!50!black,dashed] (\x*.25 ,.75,1)--(\x*.25 ,1,1);
\draw[green!50!black,dashed] (1 ,.75,\x*.25)--(1,1,\x*.25);
}
\foreach \x in{0,...,4}
{\foreach \y in{-1,1,2,3}{
    \draw[blue,->-] (1,\x*.25 ,\y*.25+.25) -- (1,\x*.25 ,\y*.25);
    \draw[blue,->-] (\x*.25 ,1,\y*.25+.25) -- (\x*.25 ,1,\y*.25);
    }
\draw[blue,dashed] (\x*.25,1,0) -- (\x*.25,1,.25);    
\draw[blue,dashed] (1,\x*.25,0) -- (1,\x*.25,.25);
    }
\draw[red] (.125,0,1) node[below]{{\scriptsize $1,1,1$}};
\draw[red] (.375,0,1) node[below]{{\scriptsize $2,1,1$}};
\draw[red] (.625,0,1) node[below]{{\scriptsize $3,1,1$}};

\draw[green!50!black] (0,.125,1) node[left]{{\scriptsize $\hspace{-.05cm}1,1,1$}};
\draw[green!50!black] (0,.375,1) node[left]{{\scriptsize $\hspace{-.05cm}1,2,1$}};
\draw[green!50!black] (0,.625,1) node[left]{{\scriptsize $\hspace{-.05cm}1,3,1$}};
\draw[green!50!black] (0,1.125,1) node[left]{{\scriptsize $\hspace{-.05cm}1,n^\theta,1$}};

\draw[blue] (1,0,.825) node[right]{{\scriptsize $n^r,1,1~$}};
\draw[blue] (1,0,.6) node[right]{{\scriptsize $n^r,1,2~$}};
\draw[blue] (1,0,.325) node[right]{{\scriptsize $n^r,1,3~$}};
\draw[blue] (1,0,-.125) node[right]{{\scriptsize $n^r,1,n^\varphi$}};
\end{tikzpicture}};
\node at (2,-.25) {
\begin{tikzpicture}[scale=4]
\draw[green!50!black,->-] (.135,0) -- (-.067,.116);
\draw[red,dashed] (.135,0) -- (-.067,.116);
\draw[red,->-] (.135,0) -- (-.067,-.116);
\draw[green!50!black,dashed] (.135,0) -- (-.067,-.116);
\draw (0,.04) node{{\scriptsize \textcolor{red}{1}}};
\draw (0,.-.04) node{{\scriptsize \textcolor{red}{2}}};
\draw (0,0) circle (.135);
\draw (-.303,.175) -- (0,.35);
\draw (0,.35) -- (.303,.175);
\draw (-.303,-.175) -- (-.303,.175);
\draw[dashed] (-.303,-.175) -- (0,-.35) -- (.303,-.175) -- (.303,.175);
\draw (-.519,.3) -- (0,.6);
\draw (0,.6) -- (.519,.3);
\draw (-.519,-.3) -- (-.519,.3);
\draw[dashed] (-.519,-.3) -- (0,-.6) -- (.519,-.3) -- (.519,.3);
\draw[red,->-] (0,.135) --node[right]{{\scriptsize 3}} (0,.35);
\draw[red,->-] (0,.35) --node[right, yshift = -0.15cm]{{\scriptsize $n^\theta + 3$}} (0,.6);
\draw[red,dashed] (0,.7) -- (0,.6);
\draw[red,dashed] (0,-.135) -- (0,-.7);
\draw[red,->-] (-.116,.067) --node[yshift = 0.15cm, xshift = 0.15cm]{{\scriptsize 4}} (-.303,.175);
\draw[red,->-] (-.303,.175) --node[yshift = 0.15cm, xshift = 0.5cm]{{\scriptsize $n^\theta + 4$}} (-.519,.3);
\draw[red,dashed] (-.596,.345)--(-.519,.3);
\draw[red,->-] (.116,.067) --node[yshift = -0.15cm, xshift = 0.25cm]{{\scriptsize $n^\theta + 2$}} (.303,.175);
\draw[red,->-] (.303,.175) --node[yshift = -0.25cm, xshift = 0.25cm]{{\scriptsize $2n^\theta + 2$}}  (.519,.3);
\draw[red,dashed] (.596,.345)--(.519,.3);
\draw[red,->-] (-.116,-.067) --node[yshift = 0.25cm]{{\scriptsize 5}} (-.303,-.175);
\draw[red,->-] (-.303,-.175) --node[yshift = 0.35cm]{{\scriptsize $n^\theta + 5$}} (-.519,-.3);
\draw[red,dashed] (-.596,-.345)--(-.519,-.3);
\draw[red,dashed] (.596,-.345) -- (.116,-.067);
\node at (.425,-.65) {{\scriptsize $+(k-1)\bar{n}_1$}};
\end{tikzpicture}\hspace{.5cm}
\begin{tikzpicture}[scale=4]
\draw (0,0) circle (.135);
\draw[green!50!black,->-] (0,.35) -- node[xshift = -0.25, yshift = 0.25cm]{{\scriptsize\rotatebox{30}{2}}}(-.303,.175);
\draw[green!50!black,->-] (.303,.175) --node[xshift = 0.25, yshift =  0.25cm]{{\scriptsize \rotatebox{-30}{1}}} (0,.35);
\draw[green!50!black,->-] (-.303,.175) --node[left]{{\scriptsize \rotatebox{90}{3}}} (-.303,-.175);
\draw[green!50!black,dashed] (-.303,-.175) -- (0,-.35) -- (.303,-.175) -- (.303,.175);
\draw[green!50!black,->-]  (0,.6) --node[xshift = -0.25, yshift = 0.25cm]{{\scriptsize\rotatebox{30}{$n^\theta + 2$}}} (-.519,.3);
\draw[green!50!black,->-]  (.519,.3) --node[xshift = 0.25, yshift =  0.25cm]{{\scriptsize \rotatebox{-30}{$n^\theta + 1$}}} (0,.6);
\draw[green!50!black,->-] (-.519,.3) --node[left]{{\scriptsize \rotatebox{90}{$n^\theta + 3$}}} (-.519,-.3);
\draw[green!50!black,dashed] (-.519,-.3) -- (0,-.6) -- (.519,-.3) -- (.519,.3);
\draw (0,.135) -- (0,.35);
\draw (0,.35) -- (0,.6);
\draw[dashed] (0,.7) -- (0,.6);
\draw[dashed] (0,-.135) -- (0,-.7);
\draw (-.116,.067) -- (-.303,.175);
\draw (-.303,.175) -- (-.519,.3);
\draw[dashed] (-.596,.345)--(-.519,.3);
\draw (.116,.067) -- (.303,.175);
\draw (.303,.175) --  (.519,.3);
\draw[dashed] (.596,.345)--(.519,.3);
\draw (-.116,-.067) -- (-.303,-.175);
\draw (-.303,-.175) -- (-.519,-.3);
\draw[dashed] (-.596,-.345)--(-.519,-.3);
\draw[dashed] (.596,-.345) -- (.116,-.067);
\node at (.425,-.55) {{\scriptsize $+(k-1)\bar{n}_1$}};
\node at (.425,-.65) {{\scriptsize $+2 + (n^r - 2)n^\theta$}};
\end{tikzpicture}
};
\node at (2, -2) {
\begin{tikzpicture}[scale=4]
\draw (.303,.175) -- (0,.35) -- (-.303,.175) -- (-.303,-.175);
\draw[dashed] (-.303,-.175) -- (0,-.35) -- (.303,-.175) -- (.303,.175);
\draw (0,0) circle (.135);
\draw (.519,.3) -- (0,.6) -- (-.519,.3) -- (-.519,-.3);
\draw[dashed] (-.519,-.3) -- (0,-.6) -- (.519,-.3) -- (.519,.3);
\draw (0,.135) -- (0,.6);
\draw[dashed] (0,.7) -- (0,.6);
\draw[dashed] (0,-.135) -- (0,-.7);
\draw (-.519,.3) -- (-.116,.067);
\draw[dashed] (-.596,.345)--(-.519,.3);
\draw (.519,.3) -- (.116,.067);
\draw[dashed] (.596,.345)--(.519,.3);
\draw (-.519,-.3) -- (-.116,-.067);
\draw[dashed] (-.596,-.345)--(-.519,-.3);
\draw[dashed] (.596,-.345) -- (.116,-.067);
\fill[blue] (.135,0)node[left]{{\scriptsize 1}} circle (.02);
\fill[blue] (-.067, .116)node[below]{{\scriptsize \,\,\,\,2}} circle (.02); 
\fill[blue] (-.067, -.116)node[above]{{\scriptsize \,\,\,\,3}} circle (.02);
\fill[blue] (-.303,.175)node[above]{{\scriptsize 5}} circle (.02);
\fill[blue] (0,.35)node[above]{{\scriptsize \,\,\,\,4}} circle (.02);
\fill[blue] (.303,.175)node[right]{{\scriptsize $n^\theta+3$}} circle (.02);
\fill[blue] (-.303,-.175)node[below]{{\scriptsize 6}} circle (.02);
\fill[blue] (-.519,.3)node[left]{{\scriptsize $n^\theta+5\,\,$}} circle (.02);
\fill[blue] (0,.596)node[above]{{\scriptsize $n^\theta+4\qquad\quad$}} circle (.02);
\fill[blue] (.519,.3)node[right]{{\scriptsize $2n^\theta+3$}} circle (.02);
\fill[blue] (-.519,-.3)node[left]{{\scriptsize $n^\theta+6$}} circle (.02);
\node at (.425,-.55) {{\scriptsize $+n_1$}};
\node at (.425,-.65) {{\scriptsize $+(k-1)\bar{n}_0$}};
\end{tikzpicture}};
\node at (1.15,1.75) {$k-1$};
\node at (2.15,1.65) {$k$};
\node at (3,1.8) {$k+1$};
\draw[-stealth]  (.75,1.25) to [in=80,out=180]node[left]{$(\bbol{E}^{(1)})^T$} (-.5,-.15);
\draw[-stealth] (2.2,.7) to [out=270,in=90] (1.25,.5);
\draw[-stealth] (2.2,.7) to [out=270,in=90] (2.7,.5);
\draw[-stealth] plot [smooth] coordinates {(2.2,.7) (2.2,.5) (2,.25) (2, -1.15)};
\end{tikzpicture}}
\caption{Geometric interpretation of the DOFs in the gradient spaces $\SSS^{p^r-1,p^\theta,p^\varphi}\times\SSS^{p^r,p^\theta-1,p^\varphi}\times \SSS^{p^r,p^\theta,p^\varphi-1}$ (left) and $V_1$ (right). In the former space, the DOFs are associated to the oriented edges of a Greville tensor mesh $\cM$. In the latter they are associated to the oriented edges of a polygonal ring $\cR$, defined in Section \ref{geometricinterpretation} and partially represented here. In the middle figure we show the numbering of the edges corresponding to the DOFs in $V_1$ from the point of view of the $k$th joint, with $k \in \{1, \ldots, n^\varphi\}$. The edges moving from the $k$th joint to the $(k+1)$th joint are represented as dots in the central-bottom figure. When considering a gradient function in $V_1$, one can get its representation in terms of the basis functions in $\SSS^{p^r-1,p^\theta,p^\varphi}\times\SSS^{p^r,p^\theta-1,p^\varphi}\times \SSS^{p^r,p^\theta,p^\varphi-1}$ by applying the transpose of the matrix $\bbol{E}^{(1)}$ defined in \eqref{eq:E1}. More precisely, $(\bbol{E}^{(1)})^T$ links the DOFs corresponding to poloidal, radial and toroidal edges in $\cR$ with the DOFs relative to the edges in the $r,\theta$ and $\varphi$ directions, respectively, of the tensor mesh $\cM$. The first two edges in a joint of $\cR$ are neither radial nor poloidal. They are linked to edges of $\cM$ both in the $r$ and in the $\theta$ directions.}\label{dof1}
\end{figure}
\begin{prop}\label{commutation1}
For every set of DOFs $\{g_{ijk}^1\}_{i,j,k=1}^{n^r-1,n^\theta,n^\varphi}, \{g_{ijk}^2\}_{i,j,k=1}^{n^r,n^\theta,n^\varphi}, \{g_{ijk}^3\}_{i,j,k=1}^{n^r,n^\theta,n^\varphi}$ in $\im(\grad) \subseteq\SSS^{p^r-1,p^\theta,p^\varphi}\times\SSS^{p^r,p^\theta-1,p^\varphi}\times \SSS^{p^r,p^\theta,p^\varphi-1}$, there exist a set of DOFs $\{f_{ijk}\}_{i,j,k=1}^{n^r,n^\theta,n^\varphi}$ in $\SSS^{p^r,p^\theta,p^\varphi}$, a set of DOFs $\{f_\ell\}_{\ell=1}^{n_0}$ in $V_0$ and a collection of numbers $\{g_\ell\}_{\ell=1}^{n_1}$ such that Equation \eqref{dof1relation} holds true, for which the following diagram commutes:
$$
\begin{tikzpicture}
\matrix (m)[matrix of math nodes,column sep=4em,row sep=3em]{
\{f_{ijk}\}_{i,j,k} \pgfmatrixnextcell \left\{\begin{array}{c}g_{ijk}^1\\\\g_{ijk}^2\\\\g_{ijk}^3\end{array}\right\}_{i,j,k}\\ \{f_\ell\}_{\ell} \pgfmatrixnextcell \{g_\ell\}_{\ell}\\
};
\draw[-stealth] (m-1-1) --node[above]{$\begin{bmatrix} \bbol{D}^{(1,0,0)}\\ \bbol{D}^{(0,1,0)}\\ \bbol{D}^{(0,0,1)}\end{bmatrix}$} (m-1-2);
\draw[-stealth] (m-2-1) --node[below]{$\bbol{D}^{(0)}$} (m-2-2);
\draw[-stealth] (m-2-1) --node[left]{$(\bbol{E}^{(0)})^T$} (m-1-1);
\draw[-stealth] (m-2-2) --node[right]{$(\bbol{E}^{(1)})^T$} (m-1-2);
\draw[-stealth, dashed] plot [smooth] coordinates {(-1.75, -1.5) (-1.25, 0.35) (0.4, 0.65)};
\draw[-stealth, dashed] plot [smooth] coordinates {(-1.35, -1.65) (0.75, -1.5) (1.25, -0.4)};
\end{tikzpicture}
$$
\end{prop}
\begin{proof} 
For all $\ell, i,j,k$, we make a loop of identities from $g_{ijk}^\ell$ back to itself that can hold only if the diagram is commutative. Let us start with $\ell = 1$. For $i = 1$, by using Equation \eqref{dof1img}, \eqref{eq:fijk}, \eqref{eq:gl}, \eqref{eq:gijk} and the fact that matrix $\bbol{E}^{(0)}$ is DTA-compatible, so that in particular its columns sum to one, we have 
\begin{equation*}
\begin{split}
g_{1jk}^1 &\stackrel{\bbol{D}^{(1,0,0)}}{=} f_{2jk} - f_{1jk}\\
&\stackrel{(\bbol{E}^{(0)})^T}{=} \sum_{\ell=1}^3 (\bar{E}_{\ell,n^\theta + j} - \bar{E}_{\ell, j})f_{\ell+(k-1)\bar{n}_0}\\
&= \sum_{\ell=1}^3 (\bar{E}_{\ell,n^\theta + j} - \bar{E}_{\ell, j})f_{\ell+(k-1)\bar{n}_0} + f_{1+(k-1)\bar{n}_0}-f_{1+(k-1)\bar{n}_0}\\
&\stackrel{\text{DTA}}{=} \sum_{\ell=1}^3 (\bar{E}_{\ell,n^\theta + j} - \bar{E}_{\ell, j})f_{\ell+(k-1)\bar{n}_0} + \sum_{\ell=1}^3 \bar{E}_{\ell,j} f_{1+(k-1)\bar{n}_0} - \sum_{\ell=1}^3 \bar{E}_{\ell,n^\theta + j} f_{1+(k-1)\bar{n}_0}\\ 
&=\sum_{\ell=1}^2 \bar{E}_{\ell, j}^{(1), L}(f_{(\ell+1)+(k-1)\bar{n}_0} - f_{1+(k-1)\bar{n}_0})\\
&\stackrel{\bbol{D}^{(0)}}{=} \sum_{\ell=1}^2 \bar{E}_{\ell, j}^{(1), L}g_{\ell+(k-1)\bar{n}_1}\stackrel{(\bbol{E}^{(1)})^T}{=} g_{1jk}^1.
\end{split}
\end{equation*}
For $i = 2$, it holds
\begin{equation*}
\begin{split}
g_{2jk}^1 &\stackrel{\bbol{D}^{(1,0,0)}}{=} f_{3jk} - f_{2jk}\\
&\stackrel{(\bbol{E}^{(0)})^T}{=}  f_{3+j+(k-1)\bar{n}_0} -\sum_{\ell=1}^3 \bar{E}_{\ell,n^\theta + j} f_{\ell+(k-1)\bar{n}_0}\\
&\stackrel{\bbol{D}^{(0)}}{=} g_{2+j+(k-1)\bar{n}_1} \stackrel{(\bbol{E}^{(1)})^T}{=}g_{2jk}^1
\end{split}
\end{equation*}
and for $i =3, \ldots, n^r - 1$,
\begin{equation*}
\begin{split}
g_{ijk}^1 &\stackrel{\bbol{D}^{(1,0,0)}}{=} f_{(i+1)jk} - f_{ijk}\\
&\stackrel{(\bbol{E}^{(0)})^T}{=}  f_{3+j+(i-2)n^\theta+(k-1)\bar{n}_0} -f_{3+j+(i-3)n^\theta+(k-1)\bar{n}_0}\\
&\stackrel{\bbol{D}^{(0)}}{=} g_{2+j+(i - 2)n^\theta+(k-1)\bar{n}_1} \stackrel{(\bbol{E}^{(1)})^T}{=} g_{ijk}^1.
\end{split}
\end{equation*}
When $\ell = 2$ and $i \leq 2$, we use again that $\bbol{E}^{(0)}$ is DTA-compatible, so that
\begin{equation*}
\begin{split}
g_{ijk}^2 &\stackrel{\bbol{D}^{(0,1,0)}}{=} f_{i(j+1)k} - f_{ijk}\\
&\stackrel{(\bbol{E}^{(0)})^T}{=} \sum_{\ell=1}^3 (\bar{E}_{\ell,(i-1)n^\theta + j + 1} - \bar{E}_{(i-1)n^\theta + j})f_{\ell+(k-1)\bar{n}_0}\\
&= \sum_{\ell=1}^3 (\bar{E}_{\ell,(i-1)n^\theta + j + 1} - \bar{E}_{(i-1)n^\theta + j})f_{\ell+(k-1)\bar{n}_0} + f_{1+(k-1)\bar{n}_0}-f_{1+(k-1)\bar{n}_0}\\
&\stackrel{\text{DTA}}{=} \sum_{\ell=1}^3 (\bar{E}_{\ell,(i-1)n^\theta + j + 1} - \bar{E}_{(i-1)n^\theta + j})f_{\ell+(k-1)\bar{n}_0} + \sum_{\ell=1}^3 \bar{E}_{\ell,(i-1)n^\theta + j} f_{1+(k-1)\bar{n}_0} - \sum_{\ell=1}^3 \bar{E}_{\ell,(i-1)n^\theta + j + 1} f_{1+(k-1)\bar{n}_0}\\ 
&=\sum_{\ell=1}^2 \bar{E}_{\ell,(i-1)n^\theta + j}^{(1), R}(f_{(\ell+1)+(k-1)\bar{n}_0} - f_{1+(k-1)\bar{n}_0})\\
&\stackrel{\bbol{D}^{(0)}}{=} \sum_{\ell=1}^2 \bar{E}_{\ell,(i-1)n^\theta + j}^{(1), R}g_{\ell+(k-1)\bar{n}_1} \stackrel{(\bbol{E}^{(1)})^T}{=} g_{ijk}^2.
\end{split}
\end{equation*}
Instead, for $i = 3, \ldots, n^r$ we simply have
\begin{equation*}
\begin{split}
g_{ijk}^2 &\stackrel{\bbol{D}^{(0,1,0)}}{=} f_{i(j+1)k} - f_{ijk}\\
&\stackrel{(\bbol{E}^{(0)})^T}{=}  f_{3+(j+1)+(i-3)n^\theta+(k-1)\bar{n}_0} - f_{3+j+(i-3)n^\theta+(k-1)\bar{n}_0}\\
&\stackrel{\bbol{D}^{(0)}}{=} g_{2+(n^r-2)n^\theta +j + (i-3)n^\theta + (k-1)\bar{n}_1} \stackrel{(\bbol{E}^{(1)})^T}{=}g_{ijk}^2.
\end{split}
\end{equation*}
Finally, when $\ell = 3$ and $i \leq 2$ it holds
\begin{equation*}
\begin{split}
g_{ijk}^3 &\stackrel{\bbol{D}^{(0,0,1)}}{=} f_{ij(k+1)} - f_{ijk}\\
&\stackrel{(\bbol{E}^{(0)})^T}{=}  \sum_{\ell=1}^3 \bar{E}_{\ell,(i-1)n^\theta + j} (f_{\ell+k\bar{n}_0}-f_{\ell+(k-1)\bar{n}_0})\\
&\stackrel{\bbol{D}^{(0)}}{=} \sum_{\ell=1}^3 \bar{E}_{\ell,(i-1)n^\theta + j}g_{n^\varphi\bar{n}_1 + \ell + (k-1)\bar{n}_0} \stackrel{(\bbol{E}^{(1)})^T}{=} g_{ijk}^3,
\end{split}
\end{equation*}
and for $i \geq 3$ we have
\begin{equation*}
\begin{split}
g_{ijk}^3 &\stackrel{\bbol{D}^{(0,0,1)}}{=} f_{ij(k+1)} - f_{ijk}\\
&\stackrel{(\bbol{E}^{(0)})^T}{=}  f_{3+j + (i-3)n^\theta+k\bar{n}_0} -f_{3+ j +(i-3)n^\theta+(k-1)\bar{n}_0}\\
&\stackrel{\bbol{D}^{(0)}}{=} g_{n^\varphi\bar{n}_1 +3+j+(i-3)n^\theta+(k-1)\bar{n}_0} \stackrel{(\bbol{E}^{(1)})^T}{=}g_{ijk}^3.
\end{split}
\end{equation*}
\end{proof}
We can finally define the basis of $V_1$. First, we introduce the following collections of $n_1$ functions,
\begin{equation}\label{eq:N100}
\m{N}^{(1, 0, 0)} \defeq \bbol{E}^{(1)}\begin{bmatrix}
\m{B}^{(1, 0, 0)}\\ \pmb{0}\\ \pmb{0}
\end{bmatrix}, \qquad
\m{N}^{(0, 1, 0)} \defeq \bbol{E}^{(1)}\begin{bmatrix}
\pmb{0} \\ \m{B}^{(0, 1, 0)}\\ \pmb{0}
\end{bmatrix}, \qquad
\m{N}^{(0, 0, 1)} \defeq \bbol{E}^{(1)}\begin{bmatrix}
\pmb{0}\\ \pmb{0}\\ \m{B}^{(0, 0, 1)}
\end{bmatrix}.
\end{equation}
We highlight that $\m{N}^{(1, 0, 0)}, \m{N}^{(0, 1, 0)}$ and $\m{N}^{(0, 0, 1)}$ contain zero functions. However, there exists no index $\ell$ in $\{1, \ldots, n_1\}$ for which $N_\ell^{(1, 0, 0)}, N_\ell^{(0, 1, 0)}$ and $N_\ell^{(0, 0, 1)}$ are all zero functions. This can be easily seen form the structure of $\bbol{E}^{(1)}$, reported in Equation \eqref{eq:E1}.
\begin{prop}\label{prop:linind1}
The non-zero functions in $\m{N}^{(1, 0, 0)}, \m{N}^{(0, 1, 0)}$ and $\m{N}^{(0, 0, 1)}$ defined in Equation \eqref{eq:N100} are linearly independent.
\end{prop}
\begin{proof}
The functions in $\m{B}^{(1,0,0)}, \m{B}^{(0,1,0)}$ and $\m{B}^{(0,0,1)}$ are linearly independent. Furthermore, one can show, see \cite{deepesh}, that the non-zero rows in $\bbol{E}^{(0)}, \bbol{E}^{(1), L}$ and $\bbol{E}^{(1), R}$ are linearly independent, which implies that matrix $\bbol{E}^{(1)}$ has linearly independent non-zero rows as well, because of Equation \eqref{eq:E1}. By Equation \eqref{eq:N100}, the non-zero functions in $\m{N}^{(1,0,0)},\m{N}^{(0,1,0)}$ and $\m{N}^{(0,0,1)}$ are linearly independent.
\end{proof}
We define the space $V_1$ as the span of the following linearly independent vector functions \begin{equation}\label{N1}
N_\ell^{(1)} := \left(\begin{array}{c}N_\ell^{(1,0,0)}\\\\N_\ell^{(0,1,0)}\\\\N_\ell^{(0,0,1)}\end{array}\right)\quad \text{for }\ell=1,\ldots, n_1.
\end{equation}
Thanks to Remark \ref{linearalgebraremark}, we can rewrite Proposition \ref{commutation1} as follows.
\begin{cor}\label{cor1}
The diagram 
$$
\begin{tikzpicture}
\matrix (m)[matrix of math nodes,column sep=4em,row sep=3em]{
\SSS^{p^r,p^s,p^t} \pgfmatrixnextcell \SSS^{p^r-1,p^s,p^t}\times\SSS^{p^r,p^s-1,p^t}\times\SSS^{p^r,p^s,p^t-1}\\ V_0 \pgfmatrixnextcell V_1\\
};

\draw[-stealth] (m-1-1) --node[above]{$\grad$} (m-1-2);
\draw[-stealth] (m-2-1) --node[below]{$\grad$} (m-2-2);
\draw[-stealth] (m-2-1) --node[left]{$\id$} (m-1-1);
\draw[-stealth] (m-2-2) --node[right]{$\id$} (m-1-2);
\draw[-stealth, dashed] plot [smooth] coordinates {(-3.5, -0.65) (-3, 0.5) (-1.45, 0.65)};
\draw[-stealth, dashed] plot [smooth] coordinates {(-3.25, -0.75) (0.5, -0.65) (1.25, 0.5)};
\end{tikzpicture}
$$
commutes and the pushforward of the functions in $V_1$, namely $\cF^1(N_\ell^{(1)})$ for $\ell=1, \ldots, n_1$, with the operator $\cF^1$ defined as in \eqref{pushforward}, are $C^0$ on $\Omega^{pol}$.
\end{cor}
\begin{proof} The commutation is drawn from Proposition \ref{commutation1} and Remark \ref{linearalgebraremark}. For what concerns the smoothness of the pushforwarded functions, the $C^0$-continuity is implied by their local exactness at the polar curve.
\end{proof}
Therefore, the representation of the gradient of a function $f \in V_0$, $f = \m{N}^{(0)} \cdot \m{f}$, in the basis of $V_1$ is given by
$$
\grad\,f = \m{N}^{(1)} \cdot \bbol{D}^{(0)}\m{f} \in V_1.
$$
\subsection{The curl of functions in $V_1$ and the space $V_2$}\label{curl}
Let $\pmb{g} \in \SSS^{p^r-1,p^\theta,p^\varphi}\times\SSS^{p^r,p^\theta-1,p^\varphi}\times\SSS^{p^r,p^\theta,p^\varphi-1}$, $$
\pmb{g} = \left(\begin{array}{c}g^1\\\\g^2\\\\g^3\end{array}\right) = \left(\begin{array}{c}\m{B}^{(1,0,0)} \cdot \m{g}^1\\\\\m{B}^{(0,1,0)} \cdot \m{g}^2\\\\\m{B}^{(0,0,1)} \cdot \m{g}^3\end{array}\right)
$$
with $\m{g}^1, \m{g}^2, \m{g}^3$ the vectorizations of the coefficients $\{g_{ijk}^1\}_{i,j,k=1}^{n^r-1,n^\theta,n^\varphi}, \{g_{ijk}^2\}_{i,j,k=1}^{n^r,n^\theta,n^\varphi}, \{g_{i,j,k}^3\}_{i,j,k=1}^{n^r,n^\theta,n^\varphi}$ obtained by running the indices first on $j$, then on $i$ and then on $k$. The curl of $\pmb{g}$ has the following expression:
$$
\curl\, \pmb{g} = \left(\begin{array}{c}\displaystyle\sum_{k=1}^{n^\varphi}\sum_{j=1}^{n^\theta} \sum_{i=1}^{n^r} (g_{i(j+1)k}^3-g_{ijk}^3 + g_{ijk}^2 - g_{ij(k+1)}^2) B_i^{p^r}\per{D}_j^{p^\theta} \per{D}_k^{p^\varphi}\\\\
\displaystyle\sum_{k=1}^{n^\varphi}\sum_{j=1}^{n^\theta} \sum_{i=1}^{n^r-1} (g_{ij(k+1)}^1-g_{ijk}^1 + g_{ijk}^3 - g_{(i+1)jk}^3) D_{i}^{p^r}\per{B}_j^{p^\theta} \per{D}_k^{p^\varphi}\\\\
\displaystyle\sum_{k=1}^{n^\varphi}\sum_{j=1}^{n^\theta} \sum_{i=1}^{n^r-1} (g_{(i+1)jk}^2-g_{ijk}^2 + g_{ijk}^1 - g_{i(j+1)k}^1) D_{i}^{p^r}\per{D}_{j}^{p^\theta}\per{B}_k^{p^\varphi}
\end{array}\right),
$$
where the functions $D_i^{p^r}, \per{D}_j^{p^s}, \per{D}_k^{p^t}$, for any $i,j,k$, have been defined in Section \ref{preliminaries}. Let $\m{B}^{(0,1,1)}, \m{B}^{(1,0,1)}$ and $\m{B}^{(1,1,0)}$ be the vectorizations of $\{B_i^{p^r}\per{D}_j^{p^\theta}\per{D}_k^{p^\varphi}\}_{i,j,k=1}^{n^r,n^\theta,n^\varphi}, \{D_i^{p^r}\per{B}_j^{p^\theta}\per{D}_k^{p^\varphi}\}_{i,j,k=1}^{n^r-1,n^\theta,n^\varphi}$ and $\{D_i^{p^r}\per{D}_j^{p^\theta}\per{B}_k^{p^\varphi}\}_{i,j,k=1}^{n^r-1,n^\theta,n^\varphi}$, respectively, running the indices on $j$, then on $i$ and then on $k$. The curl operator $\curl:\SSS^{p^r-1,p^\theta,p^\varphi}\times\SSS^{p^r,p^\theta-1,p^\varphi}\times\SSS^{p^r,p^\theta,p^\varphi-1} \to \SSS^{p^r,p^\theta-1,p^\varphi-1}\times\SSS^{p^r-1,p^\theta,p^\varphi-1}\times\SSS^{p^r-1,p^\theta-1,p^\varphi}$ has the following matrix representation:
$$
\curl\,\pmb{g} = \left(\begin{array}{c}\m{B}^{(0,1,1)} \cdot [\bbol{D}^{(0,1,0)} \m{g}^3-\bbol{D}^{(0,0,1)}\m{g}^2]\\\\\m{B}^{(1,0,1)} \cdot [\bar{\bbol{D}}^{(0,0,1)}\m{g}^1 - \bbol{D}^{(1,0,0)}\m{g}^3]\\\\\m{B}^{(1,1,0)} \cdot [\bbol{D}^{(1,0,0)} \m{g}^2 - \bar{\bbol{D}}^{(0,1,0)}\m{g}^1]\end{array}\right) \quad \forall\,\pmb{g} \in \SSS^{p^r-1,p^\theta,p^\varphi}\times\SSS^{p^r,p^\theta-1,p^\varphi}\times\SSS^{p^r,p^\theta,p^\varphi-1},
$$
where the matrices $\bbol{D}^{(1,0,0)}, \bbol{D}^{(0,1,0)}$ and $\bbol{D}^{(0,0,1)}$ have been defined in Equation \eqref{D100}, while $\bar{\bbol{D}}^{(0, 1, 0)}$ and $\bar{\bbol{D}}^{(0,0,1)}$ have the same definitions of $\bbol{D}^{(0,1,0)}$ and $\bbol{D}^{(0,0,1)}$, respectively, when $\bbol{I}_{n^r}$ is replaced with $\bbol{I}_{n^r-1}$. Hence, the DOFs in $\im(\curl)$ have form 
\begin{equation}\label{eq:tensorcurl}
\left\{\begin{array}{ll}
h_{ijk}^1 \defeq g_{i(j+1)k}^3-g_{ijk}^3 + g_{ijk}^2 - g_{ij(k+1)}^2 & \text{for }i=1, \ldots, n^r;~j=1, \ldots, n^\theta;~k=1, \ldots, n^\varphi;\\\\
h_{ijk}^2 \defeq g_{ij(k+1)}^1-g_{ijk}^1 + g_{ijk}^3 - g_{(i+1)jk}^3 & \text{for }i=1, \ldots, n^r-1;~j=1, \ldots, n^\theta;~k=1, \ldots, n^\varphi;\\\\
h_{ijk}^3 \defeq g_{(i+1)jk}^2-g_{ijk}^2 + g_{ijk}^1 - g_{i(j+1)k}^1 & \text{for }i=1, \ldots, n^r-1;~j=1, \ldots, n^\theta;~k=1, \ldots, n^\varphi;
\end{array}\right.
\end{equation}
for some set of DOFs $\{g_{ijk}^1\}_{i,j,k=1}^{n^r-1,n^\theta,n^\varphi}$, $\{g_{ijk}^2\}_{i,j,k=1}^{n^r,n^\theta,n^\varphi}$, $\{g_{ijk}^3\}_{i,j,k=1}^{n^r,n^\theta,n^\varphi}$ in $\SSS^{p^r-1,p^\theta,p^\varphi}\times\SSS^{p^r,p^\theta-1,p^\varphi}\times\SSS^{p^r,p^\theta,p^\varphi-1}$. We have seen the geometric interpretations of $g_{ijk}^1, g_{ijk}^2$ and $g_{ijk}^3$ as oriented edges in a tensor mesh $\cM$. Furthermore, each collection of four edges invoked in the expressions of $h_{ijk}^1, h_{ijk}^2$ and $h_{ijk}^3$ frames a face of $\cM$. On the other hand, the DOFs in $\SSS^{p^r,p^\theta-1,p^\varphi-1}\times\SSS^{p^r-1,p^\theta,p^\varphi-1}\times\SSS^{p^r-1,p^\theta-1,p^\varphi}$ are the oriented faces of $\cM$, in this geometric interpretation. From this point of view, the curl operator maps the oriented edges framing a face to such face equipped with the orientation induced by the edges, as shown in Figure \ref{dof2}.
\begin{figure}
\centering
\begin{tikzpicture}[scale=3.5]
\foreach \x in{0,...,4}
{\foreach \y in{0,...,2}{
   \draw[red,->-] (\y*.25,\x*.25,1) -- (\y*.25+.25,\x*.25 ,1);
   \draw[red,->-] (\y*.25,1,\x*.25 ) -- (\y*.25+.25,1,\x*.25 );
}
\draw[red,dashed] (.75,\x*.25 ,1)--(1,\x*.25 ,1);
\draw[red,dashed] (.75,1,\x*.25)--(1,1,\x*.25);
}
\foreach \x in{0,...,4}
{\foreach \y in{0,...,2}{
\draw[green!50!black,->-] (\x*.25 ,\y*.25,1) -- (\x*.25 ,\y*.25+.25,1);
\draw[green!50!black,->-] (1,\y*.25,\x*.25 ) -- (1,\y*.25+.25,\x*.25 );
\draw[green!50!black,->-] (\x*.25,1,\y*.25 ) -- (\x*.25,1.25,\y*.25 );
\draw[green!50!black,->-] (\x*.25,1,\y*.25+.5) -- (\x*.25,1.25,\y*.25+.5);
}
\draw[green!50!black,dashed] (\x*.25 ,.75,1)--(\x*.25 ,1,1);
\draw[green!50!black,dashed] (1 ,.75,\x*.25)--(1,1,\x*.25);
}
\foreach \x in{0,...,4}
{\foreach \y in{-1,1,2,3}{
    \draw[blue,->-] (1,\x*.25 ,\y*.25+.25) -- (1,\x*.25 ,\y*.25);
    \draw[blue,->-] (\x*.25 ,1,\y*.25+.25) -- (\x*.25 ,1,\y*.25);
    }
\draw[blue,dashed] (\x*.25,1,0) -- (\x*.25,1,.25);    
\draw[blue,dashed] (1,\x*.25,0) -- (1,\x*.25,.25);
    }
\draw[red] (.125,0,1) node[below]{{\scriptsize $1,1,1$}};
\draw[red] (.375,0,1) node[below]{{\scriptsize $2,1,1$}};
\draw[red] (.625,0,1) node[below]{{\scriptsize $3,1,1$}};

\draw[green!50!black] (0,.125,1) node[left]{{\scriptsize $\hspace{-.05cm}1,1,1$}};
\draw[green!50!black] (0,.375,1) node[left]{{\scriptsize $\hspace{-.05cm}1,2,1$}};
\draw[green!50!black] (0,.625,1) node[left]{{\scriptsize $\hspace{-.05cm}1,3,1$}};
\draw[green!50!black] (0,1.125,1) node[left]{{\scriptsize $\hspace{-.05cm}1,n^\theta,1$}};

\draw[blue] (1,0,.825) node[right]{{\scriptsize $n^r,1,1~$}};
\draw[blue] (1,0,.6) node[right]{{\scriptsize $n^r,1,2~$}};
\draw[blue] (1,0,.325) node[right]{{\scriptsize $n^r,1,3~$}};
\draw[blue] (1,0,-.125) node[right]{{\scriptsize $n^r,1,n^\varphi$}};
\draw[-stealth] (1.2,.5) --node[above]{\scalebox{.65}{$\left[\hspace{-.15cm}\begin{array}{ccc} \textcolor{red}{0} &\hspace{-.15cm} \textcolor{green!50!black}{-\bbol{D}^{(0,0,1)}} &\hspace{-.15cm} \textcolor{blue}{\bbol{D}^{(0,1,0)}}\\ 
\textcolor{red}{\bar{\bbol{D}}^{(0,0,1)}} &\hspace{-.15cm} \textcolor{green!50!black}{0} &\hspace{-.15cm} \textcolor{blue}{-\bbol{D}^{(1,0,0)}}\\
\textcolor{red}{-\bar{\bbol{D}}^{(0,1,0)}} &\hspace{-.15cm} \textcolor{green!50!black}{\bbol{D}^{(1,0,0)}} &\hspace{-.15cm} \textcolor{blue}{0}
\end{array}\hspace{-.15cm}\right]\hspace{-.15cm} \begin{array}{c}\textcolor{cyan!90!black}{\blacksquare}\\\textcolor{magenta!90!black}{\blacksquare}\\\textcolor{orange!90!black}{\blacksquare}\end{array}$}} (2.1,.5);
\draw[-stealth] (2,0,1) --node[below]{$r$} (2.3,0,1);
\draw[-stealth] (2,0,1) --node[left]{$\theta$} (2,.3,1);
\draw[-stealth](2,0,1) -- (2,0,.5) node[above]{$\varphi$};
\end{tikzpicture}\hspace{-.25cm}
\begin{tikzpicture}[scale=3.5]
\foreach \x in{0,...,4}{
\draw (0,\x*.25,1) -- (.75,\x*.25 ,1);
\draw[dashed] (.75,\x*.25 ,1)--(1,\x*.25 ,1);
\draw (\x*.25 ,0,1) -- (\x*.25 ,.75,1);
\draw[dashed] (\x*.25 ,.75,1)--(\x*.25 ,1,1);
\draw (1,\x*.25 ,1) -- (1,\x*.25 ,.25);
\draw[dashed] (1,\x*.25 ,.25)--(1,\x*.25 ,0);
\draw (\x*.25 ,1,1) -- (\x*.25 ,1,.25);
\draw[dashed] (\x*.25 ,1,.25)--(\x*.25 ,1,0);
\draw (1,0,\x*.25 ) -- (1,.75,\x*.25 );
\draw[dashed] (1,.75,\x*.25 )--(1,1,\x*.25 );
\draw (0,1,\x*.25 ) -- (.75,1,\x*.25 );
\draw[dashed] (.75,1,\x*.25 )--(1,1,\x*.25 );
}
\foreach \x in {0,1,2}{
\foreach \y in {0,1,2}{
\node at (\x*.25+.125,\y*.25+.125,1) {\textcolor{orange!90!black}{\acpatch}};
\node at (1,\x*.25+.125,1-\y*.25 -.075) {{\scriptsize\rotslant{45}{45}{\textcolor{cyan!90!black}{\cpatch}}}};
\node at (\x*.25+.1,1,1-\y*.25-.13) {{\scriptsize \rotslant{-30}{45}{\textcolor{magenta!90!black}{\cpatch}}}};
}}

\draw[orange!90!black] (.375,0,1) node[below]{{\scriptsize $2,1,1$}};
\draw[orange!90!black] (.625,0,1) node[below]{{\scriptsize $3,1,1$}};
\draw[orange!90!black] (-.125,.175,1) node[below]{{\scriptsize $1,1,1$}};
\draw[orange!90!black] (-.125,.425,1) node[below]{{\scriptsize $1,2,1$}};
\draw[orange!90!black] (-.125,.675,1) node[below]{{\scriptsize $1,3,1$}};
\draw[magenta!90!black] (-.25,1,.1) node[below]{{\scriptsize $1,n^\theta$,3}};
\draw[magenta!90!black] (-.25,1,.4) node[below]{{\scriptsize $1,n^\theta$,2}};
\draw[magenta!90!black] (-.25,1,.7) node[below]{{\scriptsize $1,n^\theta$,1}};
\draw[magenta!90!black] (.35,.99,1) node[below]{{\scriptsize $2,n^\theta$,1}};
\draw[magenta!90!black] (.65,.99,1) node[below]{{\scriptsize $3,n^\theta$,1}};
\draw[cyan!90!black] (.875,.125,1) node{{\scriptsize $n^r,1,1$}};
\draw[cyan!90!black] (1.175,0,.675) node{{\scriptsize $n^r,1,2$}};
\draw[cyan!90!black] (1.175,0,.475) node{{\scriptsize $n^r,1,3$}};
\draw[cyan!90!black] (.875,.375,1) node{{\scriptsize $n^r,2,1$}};
\draw[cyan!90!black] (.875,.6,1) node{{\scriptsize $n^r,3,1$}};
\end{tikzpicture}
\caption{The action of the curl operator when associating the DOFs in $\SSS^{p^r-1,p^\theta,p^\varphi}\times\SSS^{p^r,p^\theta-1,p^\varphi}\times\SSS^{p^r,p^\theta,p^\varphi-1}$ to the oriented edges of a tensor mesh $\cM$, as shown on the left side of the figure. In $\SSS^{p^r,p^\theta-1,p^\varphi-1}\times\SSS^{p^r-1,p^\theta,p^\varphi-1}\times\SSS^{p^r-1,p^\theta-1,p^\varphi}$ the DOFs become oriented faces of $\cM$ and the curl action on the DOFs consists in mapping the edges framing a face to such face, in this geometric interpretation. In particular, the first components of the DOFs in $\SSS^{p^r,p^\theta-1,p^\varphi-1}\times\SSS^{p^r-1,p^\theta,p^\varphi-1}\times\SSS^{p^r-1,p^\theta-1,p^\varphi}$ correspond to faces parallel to the $(\theta,\varphi)$-plane, the seconds to faces parallel to the $(r,\varphi)$-plane and the third components to faces parallel to the $(r,\theta)$-plane. Because of the periodicity of the spaces in the $\theta$ and $\varphi$ directions we should have outwards pointing faces on the top and back sides of the tensor mesh $\cM$, as we do for the edges. We have chosen to omit such faces for the readability of the figure.}\label{dof2}
\end{figure}

Let now $\pmb{g} \in V_1$, $\pmb{g}=\m{N}^{(1)} \cdot \m{g}$ for a set of coefficients $\m{g} = (g_1, \ldots, g_{n_1})^T$. In order to simplify the notation in what follows, let us introduce $\bar{n}_2 \defeq \bar{n}_0 -3 = n^\theta(n^r - 2)$. We now define the action of the curl on the DOFs in $V_1$. We want it to mimic what we had for the DOFs in $\SSS^{p^r-1,p^\theta,p^\varphi}\times\SSS^{p^r,p^\theta-1,p^\varphi}\times\SSS^{p^r,p^\theta,p^\varphi-1}$. If in the latter, the geometric interpretation of such action consisted in mapping the edges to the faces of a tensor mesh $\cM$, this time the curl should map the edges to the faces of the polygonal ring $\cR$ introduced in Section \ref{geometricinterpretation}, as shown in Figure \ref{curldof2}. The total number of faces in $\cR$ is $n_2 \defeq n^\varphi(\bar{n}_1 + \bar{n}_2) = n^\varphi(3n^\theta(n^r - 2) + 2)$, as the number of faces in a joint of $\cR$ is $\bar{n}_2$ and the number of faces in each side is $\bar{n}_1$, which is equal to the number of edges in a joint. 
\begin{figure}
\centering
\begin{tikzpicture}
\node at (0,0) {\includegraphics[width=.5\textwidth]{dof11}};
\node at (5.75,0) {\includegraphics[width=.1\textwidth]{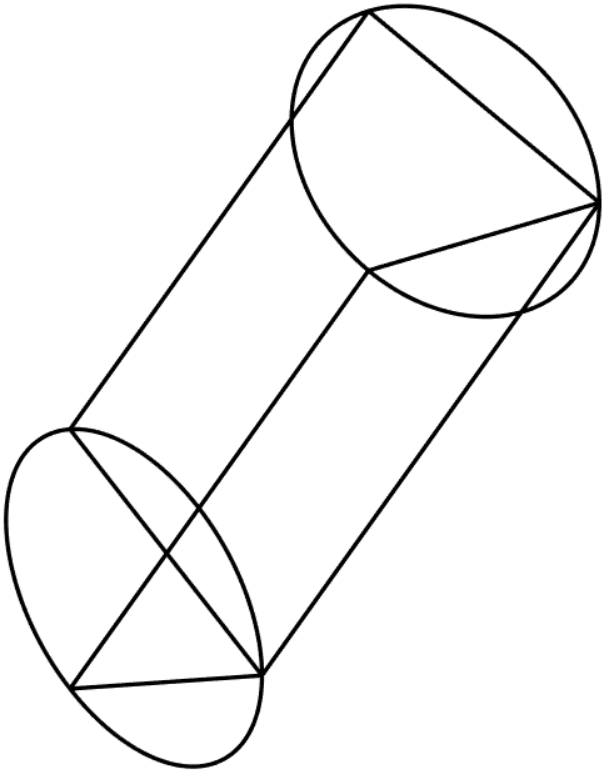}};
\node at (7.75,0) {\includegraphics[width=.1\textwidth]{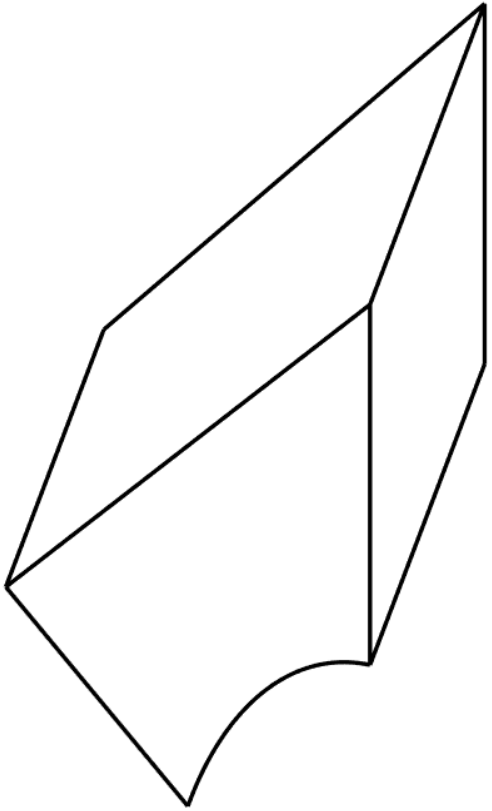}};
\node at (9.75,0) {\includegraphics[width=.1\textwidth]{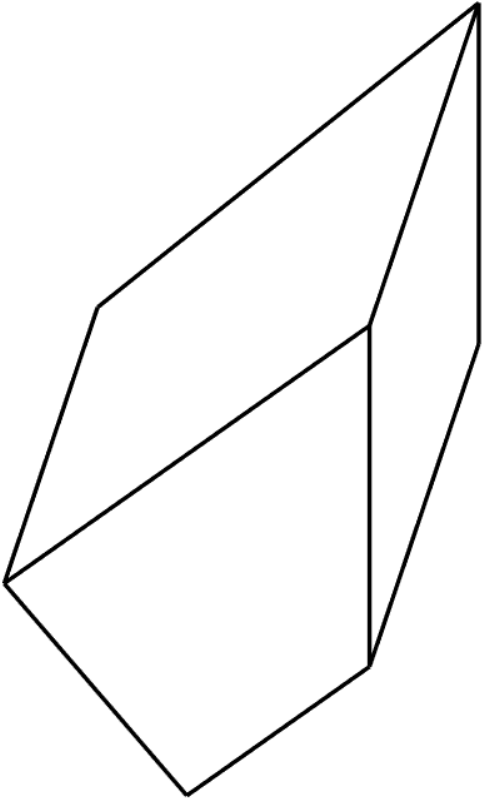}};
\draw[-stealth] (4,0) --node[above]{$\bbol{D}^{(1)}$} (4.75,0);
\node at (5.7,-.2) {\scalebox{1.25}{\rotslant{2}{50}{$\pmb{\circlearrowright}$}}};
\node at (5.7,.2) {\scalebox{1.35}{\rotslant{-90}{20}{$\pmb{\circlearrowright}$}}};
\node at (7.7,-.65) {\scalebox{1.75}{$\pmb{\circlearrowleft}$}}; 
\node at (9.7,-.65) {\scalebox{1.75}{$\pmb{\circlearrowleft}$}}; 
\node at (8.3,.1) {\rotslant{55}{52}{$\pmb{\circlearrowright}$}};
\node at (10.3,.1) {\rotslant{55}{52}{$\pmb{\circlearrowright}$}};
\node at (7.5,.25) {\scalebox{1.25}{\rotslant{15}{50}{$\pmb{\circlearrowright}$}}};
\node at (9.5,.25) {\scalebox{1.25}{\rotslant{15}{50}{$\pmb{\circlearrowright}$}}};
\node at (2.75,-5) {
\begin{tikzpicture}[scale=4]
\draw (.135,0) -- (-.067,.116);
\draw[dashed] (.135,0) -- (-.067,.116);
\draw (.135,0) -- (-.067,-.116);
\draw[dashed] (.135,0) -- (-.067,-.116);
\draw (0,.04) node{{\scriptsize 1}};
\draw (0,.-.04) node{{\scriptsize 2}};
\draw (0,0) circle (.135);
\draw (-.303,.175) -- (0,.35);
\draw (0,.35) -- (.303,.175);
\draw (-.303,-.175) -- (-.303,.175);
\draw[dashed] (-.303,-.175) -- (0,-.35) -- (.303,-.175) -- (.303,.175);
\draw (-.519,.3) -- (0,.6);
\draw (0,.6) -- (.519,.3);
\draw (-.519,-.3) -- (-.519,.3);
\draw[dashed] (-.519,-.3) -- (0,-.6) -- (.519,-.3) -- (.519,.3);
\draw (0,.135) --node[right]{{\scriptsize 3}} (0,.35);
\draw (0,.35) --node[right, yshift = -0.15cm]{{\scriptsize $n^\theta + 3$}} (0,.6);
\draw[dashed] (0,.7) -- (0,.6);
\draw[dashed] (0,-.135) -- (0,-.7);
\draw (-.116,.067) --node[yshift = 0.15cm, xshift = 0.15cm]{{\scriptsize 4}} (-.303,.175);
\draw (-.303,.175) --node[yshift = 0.15cm, xshift = 0.5cm]{{\scriptsize $n^\theta + 4$}} (-.519,.3);
\draw[dashed] (-.596,.345)--(-.519,.3);
\draw (.116,.067) --node[yshift = -0.15cm, xshift = 0.25cm]{{\scriptsize $n^\theta + 2$}} (.303,.175);
\draw (.303,.175) --node[yshift = -0.25cm, xshift = 0.25cm]{{\scriptsize $2n^\theta + 2$}}  (.519,.3);
\draw[dashed] (.596,.345)--(.519,.3);
\draw (-.116,-.067) --node[yshift = 0.25cm]{{\scriptsize 5}} (-.303,-.175);
\draw (-.303,-.175) --node[yshift = 0.35cm]{{\scriptsize $n^\theta + 5$}} (-.519,-.3);
\draw[dashed] (-.596,-.345)--(-.519,-.3);
\draw[dashed] (.596,-.345) -- (.116,-.067);
\node at (.425,-.65) {{\scriptsize $+(k-1)\bar{n}_1$}};
\end{tikzpicture}$\quad$
\begin{tikzpicture}[scale=4]
\draw (0,0) circle (.135);
\draw (0,.35) -- node[xshift = -0.25, yshift = 0.25cm]{{\scriptsize\rotatebox{30}{2}}}(-.303,.175);
\draw (.303,.175) --node[xshift = 0.25, yshift =  0.25cm]{{\scriptsize \rotatebox{-30}{1}}} (0,.35);
\draw (-.303,.175) --node[left]{{\scriptsize \rotatebox{90}{3}}} (-.303,-.175);
\draw[dashed] (-.303,-.175) -- (0,-.35) -- (.303,-.175) -- (.303,.175);
\draw  (0,.6) --node[xshift = -0.25, yshift = 0.25cm]{{\scriptsize\rotatebox{30}{$n^\theta + 2$}}} (-.519,.3);
\draw  (.519,.3) --node[xshift = 0.25, yshift =  0.25cm]{{\scriptsize \rotatebox{-30}{$n^\theta + 1$}}} (0,.6);
\draw (-.519,.3) --node[left]{{\scriptsize \rotatebox{90}{$n^\theta + 3$}}} (-.519,-.3);
\draw[dashed] (-.519,-.3) -- (0,-.6) -- (.519,-.3) -- (.519,.3);
\draw (0,.135) -- (0,.35);
\draw (0,.35) -- (0,.6);
\draw[dashed] (0,.7) -- (0,.6);
\draw[dashed] (0,-.135) -- (0,-.7);
\draw (-.116,.067) -- (-.303,.175);
\draw (-.303,.175) -- (-.519,.3);
\draw[dashed] (-.596,.345)--(-.519,.3);
\draw (.116,.067) -- (.303,.175);
\draw (.303,.175) --  (.519,.3);
\draw[dashed] (.596,.345)--(.519,.3);
\draw (-.116,-.067) -- (-.303,-.175);
\draw (-.303,-.175) -- (-.519,-.3);
\draw[dashed] (-.596,-.345)--(-.519,-.3);
\draw[dashed] (.596,-.345) -- (.116,-.067);
\node at (.425,-.55) {{\scriptsize $+(k-1)\bar{n}_1$}};
\node at (.425,-.65) {{\scriptsize $+2 + (n^r - 2)n^\theta$}};
\end{tikzpicture}$\quad$
\begin{tikzpicture}[scale=4]
\draw (-.303,.175) -- (0,.35);
\draw (0,.35) -- (.303,.175);
\draw (-.303,-.175) -- (-.303,.175);
\draw[dashed] (-.303,-.175) -- (0,-.35) -- (.303,-.175) -- (.303,.175);
\draw (0,0) circle (.135);
\draw (-.519,.3) -- (0,.6);
\draw (0,.6) -- (.519,.3);
\draw (-.519,-.3) -- (-.519,.3);
\draw[dashed] (-.519,-.3) -- (0,-.6) -- (.519,-.3) -- (.519,.3);
\draw (0,.135) -- (0,.35);
\draw (0,.35) -- (0,.6);
\draw[dashed] (0,.7) -- (0,.6);
\draw[dashed] (0,-.135) -- (0,-.7);
\draw (-.116,.067) -- (-.303,.175);
\draw (-.303,.175) -- (-.519,.3);
\draw[dashed] (-.596,.345)--(-.519,.3);
\draw (.116,.067) -- (.303,.175);
\draw (.303,.175) -- (.519,.3);
\draw[dashed] (.596,.345)--(.519,.3);
\draw (-.116,-.067) -- (-.303,-.175);
\draw (-.303,-.175) -- (-.519,-.3);
\draw[dashed] (-.596,-.345)--(-.519,-.3);
\draw[dashed] (.596,-.345) -- (.116,-.067);
\node at (-.1,.2) {{\scriptsize $1$}};
\node at (.1,.2) {{\scriptsize $n^\theta$}};
\node at (-.2,0) {{\scriptsize $2$}};
\node at (-.2,.35) {{\scriptsize $1+n^\theta$}};
\node at (.2,.35) {{\scriptsize $2n^\theta$}};
\node at (-.4,0) {{\scriptsize $2+n^\theta$}};
\node at (.5,-.5) {{\scriptsize $+n^\varphi\bar{n}_1$}};
\node at (.5,-.6) {{\scriptsize $+(k-1)\bar{n}_2$}};
\end{tikzpicture}
};
\end{tikzpicture}
\caption{The action of the curl operator when associating the DOFs in $V_1$ to the oriented edges of the polygonal ring $\cR$. In the curl space the DOFs correspond to oriented faces of $\cR$. On the top of the figure we see part of $\cR$. In particular on the right we show sampled faces of $\cR$. The left-most sampled faces are those connecting the centers of two consecutive joints of $\cR$. $\bbol{D}^{(1)}$ is the matrix encoding the action of the curl in this geometric interpretation and maps the edges to the faces. On the bottom of the figure we see the index numbering of the DOFs in the curl space associated to the faces from the point of view of the $k$th joint of $\cR$, for $k \in \{1, \ldots, n^\varphi\}$. In particular the faces between the $k$th joint and the $(k+1)$th joint are seen as radial and poloidal edges from this perspective in the left-most and central figures.}\label{curldof2}
\end{figure}
We order the degrees of freedom in $V_2$ as follows (see the bottom of Figure \ref{curldof2} for reference). First we run over all those associated to faces on the sides of $\cR$, following the same ordering used in $V_1$ for the DOFs linked to edges in a joint of $\cR$. Then we number those associated to faces in a joint of $\cR$. For each joint, these faces are ordered anticlockwise from the innermost polygon, in the net composing the joint, towards the outermost.
The action on the DOFs operated by the curl is expressed by the matrix $\bbol{D}^{(1)}$ of size $n_2 \times n_1$ which has the following structure and entries
\begin{equation}\label{eq:D1}
\bbol{D}^{(1)} = \begin{bmatrix}
\per{\bbol{D}}_{n^\varphi} \otimes -\bbol{I}_{\bar{n}_1} & \bbol{I}_{n^\varphi} \otimes \bbol{D}_J^{(0)}\\
\bbol{I}_{n^\varphi} \otimes \bbol{D}_J^{(1)} & \raisebox{-0.05cm}{\scalebox{2}{$\circ$}}  
\end{bmatrix}
\end{equation}
where $\bbol{D}_J^{(1)}$ is the following $\bar{n}_2 \times \bar{n}_1$ matrix, identified in \cite[Equations (94)--(97)]{deepesh},  providing the DOFs related to a joint in $\cR$:
\begin{equation}\label{eq:DJ1}
\bbol{D}_J^{(1)} \defeq \raisebox{-1.5cm}{\begin{tikzpicture}[scale = 2.25]
\draw (0, 0) rectangle (3.5, 1.5);
\draw (0, 1) -- (3.5, 1);
\draw (1, 0) -- (1, 1.5);
\draw (2, 0) -- (2, 1.5);
\draw (0.5, 1) -- (0.5, 1.5);
\draw (2.5, 1) -- (2.5, 1.5);
\node at (0.25, 1.25) {\scalebox{0.75}{$-\bar{\bbol{E}}_J^{(1), R, R}$}};
\node at (0.75, 1.25) {$-\per{\bbol{D}}_{n^\theta}$};
\node at (1.5, 0.5) {$\bbol{I}_{n^r-3} \otimes -\per{\bbol{D}}_{n^\theta}$};
\node at (2.75, 0.5) {$\bbol{D}_{n^r-2} \otimes \bbol{I}_{n^\theta}$};
\node at (2.25, 1.25) {$\bbol{I}_{n^\theta}$};
\node at (0.5, 0.5) {\scalebox{3}{$\circ$}};
\node at (1.5, 1.25) {\scalebox{3}{$\circ$}};
\node at (3, 1.25) {\scalebox{3}{$\circ$}};
\draw[|-|] (0, 1.6) --node[above]{$2$} (0.5, 1.6);
\draw[-|] (0.5, 1.6) --node[above]{$(n^r-2)n^\theta$} (2, 1.6);
\draw[-|] (2, 1.6) --node[above]{$(n^r-2)n^\theta$} (3.5, 1.6);
\draw[|-|] (3.6, 1.5) --node[right]{$n^\theta$} (3.6, 1);
\draw[-|] (3.6, 1) --node[right]{$(n^r-3)n^\theta$} (3.6, 0);
\end{tikzpicture}}
\end{equation}
with $\bar{\bbol{E}}_J^{(1), R, R}$ the right block of $\bar{\bbol{E}}_J^{(1), R}$ reported in Equation \eqref{eq:EJ1}, that is, the matrix whose elements are $\bar{E}_{\ell, n^\theta + j}^{(1), R}$ for $\ell = 1, 2$ and $j = 1, \ldots, n^\theta$. 

Let now $\bbol{E}^{(2)}$ be the following matrix:
\begin{equation}\label{eq:E2}
\bbol{E}^{(2)} \defeq \begin{bmatrix}
\bbol{I}_{n^\varphi} \otimes \bbol{E}_J^{(1), R} & -\bbol{I}_{n^\varphi} \otimes \bbol{E}_J^{(1), L} &\raisebox{-0.15cm}{\scalebox{3}{$\circ$}} \\
\raisebox{-0.15cm}{\scalebox{3}{$\circ$}} & \raisebox{-0.15cm}{\scalebox{3}{$\circ$}} & \bbol{I}_{n^\varphi} \otimes \bbol{E}^{(2)}_J
\end{bmatrix}
\end{equation}
with $\bbol{E}^{(2)}_J$ the matrix introduced in \cite[Equations (94)--(97)]{deepesh} which relates the DOFs in $V_2$ associated to faces in a joint of $\cR$ to those in $\SSS^{p^r, p^\theta - 1, p^\varphi - 1} \times \SSS^{p^r-1, p^\theta, p^\varphi - 1} \times \SSS^{p^r-1, p^\theta - 1, p^\varphi}$ associated to faces in $\cM$ parallel to the $(r, \theta)$ plane. $\bbol{E}^{(2)}_J$ has size $\bar{n}^2 \times n^\theta(n^r-1)$ and the following simple stricture
\begin{equation}\label{eq:E2J}
\bbol{E}^{(2)}_J = \begin{bmatrix} \raisebox{-0.075cm}{\scalebox{2}{$\circ$}} & \bbol{I}_{n^\theta(n^r - 2)} \end{bmatrix}.
\end{equation}
Furthermore, note that the blocks involving $\bbol{E}_J^{(1), L}$ and $\bbol{E}_J^{(1), R}$ are inverted compared to the expression of $\bbol{E}^{(1)}$.
Given a set of DOFs in $V_2$, $\m{h}=\{h_\ell\}_{\ell=1}^{n_2}$, we will have that
\begin{equation}\label{dof2relation}
\begin{bmatrix}
\m{h}^1 \\\\ \m{h}^2 \\\\ \m{h}^3
\end{bmatrix} = (\bbol{E}^{(2)})^T \m{h}
\end{equation}
where $\m{h}^1, \m{h}^2, \m{h}^3$ are the vectorizations of the DOFs in the first, second and third components of an element $\pmb{h} \in \SSS^{p^r,p^\theta-1,p^\varphi-1}\times\SSS^{p^r-1,p^\theta,p^\varphi-1}\times\SSS^{p^r-1,p^\theta-1,p^\varphi}$, i.e.,
$$
\pmb{h} = \begin{pmatrix}\m{B}^{(0,1,1)}\cdot \m{h}^1\\\\\m{B}^{(1,0,1)}\cdot \m{h}^2\\\\\m{B}^{(1,1,0)}\cdot \m{h}^3\end{pmatrix} = \begin{pmatrix} h^1\\\\h^2\\\\h^3\end{pmatrix}  \in \SSS^{p^r,p^\theta-1,p^\varphi-1}\times\SSS^{p^r-1,p^\theta,p^\varphi-1}\times\SSS^{p^r-1,p^\theta-1,p^\varphi}.
$$
The geometric interpretation of this link between the DOFs in $V_2$ and in $\SSS^{p^r,p^\theta-1,p^\varphi-1}\times\SSS^{p^r-1,p^\theta,p^\varphi-1}\times\SSS^{p^r-1,p^\theta-1,p^\varphi}$ is visualized in Figure \ref{dof22}.
\begin{figure}
\centering
\scalebox{.9}{
\begin{tikzpicture}[scale=3.5]
\node at (2,1.25) {\includegraphics[width=.1\textwidth]{dof21}};
\node at (2.65,1.25) {\includegraphics[width=.1\textwidth]{dof22}};
\node at (3.4,1.25) {\includegraphics[width=.1\textwidth]{dof23}};
\node at (1.95,1.15) {\scalebox{1.25}{\rotslant{2}{50}{\textcolor{cyan!50!magenta}{$\pmb{\circlearrowright}$}}}};
\node at (2,1.3) {\scalebox{1.35}{\rotslant{-90}{20}{\textcolor{cyan!50!magenta}{$\pmb{\circlearrowright}$}}}};
\node at (2.65,1.1) {\scalebox{1.75}{\textcolor{orange!90!black}{$\pmb{\circlearrowleft}$}}}; 
\node at (3.4,1.05) {\scalebox{1.75}{\textcolor{orange!90!black}{$\pmb{\circlearrowleft}$}}};
\node at (2.8,1.3) {\rotslant{55}{52}{\textcolor{cyan!90!black}{$\pmb{\circlearrowright}$}}};
\node at (3.55,1.25) {\rotslant{55}{52}{\textcolor{cyan!90!black}{$\pmb{\circlearrowright}$}}};
\node at (2.6,1.3) {\scalebox{1.25}{\rotslant{15}{50}{\textcolor{magenta!90!black}{$\pmb{\circlearrowleft}$}}}};
\node at (3.35,1.3) {\scalebox{1.25}{\rotslant{15}{50}{\textcolor{magenta!90!black}{$\pmb{\circlearrowleft}$}}}};

\draw[rounded corners] (1.65,.7) -- (3.8,.7) -- (3.8,1.75) -- (1.65,1.75) -- cycle;
\foreach \x in{0,...,4}{
\draw (0,\x*.25,1) -- (.75,\x*.25 ,1);
\draw[dashed] (.75,\x*.25 ,1)--(1,\x*.25 ,1);
\draw (\x*.25 ,0,1) -- (\x*.25 ,.75,1);
\draw[dashed] (\x*.25 ,.75,1)--(\x*.25 ,1,1);
\draw (1,\x*.25 ,1) -- (1,\x*.25 ,.25);
\draw[dashed] (1,\x*.25 ,.25)--(1,\x*.25 ,0);
\draw (\x*.25 ,1,1) -- (\x*.25 ,1,.25);
\draw[dashed] (\x*.25 ,1,.25)--(\x*.25 ,1,0);
\draw (1,0,\x*.25 ) -- (1,.75,\x*.25 );
\draw[dashed] (1,.75,\x*.25 )--(1,1,\x*.25 );
\draw (0,1,\x*.25 ) -- (.75,1,\x*.25 );
\draw[dashed] (.75,1,\x*.25 )--(1,1,\x*.25 );
}
\foreach \x in {0,1,2}{
\foreach \y in {0,1,2}{
\node at (\x*.25+.125,\y*.25+.125,1) {\textcolor{orange!90!black}{\acpatch}};
\node at (1,\x*.25+.125,1-\y*.25 -.075) {{\scriptsize\rotslant{45}{45}{\textcolor{cyan!90!black}{\cpatch}}}};
\node at (\x*.25+.1,1,1-\y*.25-.13) {{\scriptsize \rotslant{-30}{45}{\textcolor{magenta!90!black}{\cpatch}}}};
}}

\draw[orange!90!black] (.375,0,1) node[below]{{\scriptsize $2,1,1$}};
\draw[orange!90!black] (.625,0,1) node[below]{{\scriptsize $3,1,1$}};
\draw[orange!90!black] (-.125,.175,1) node[below]{{\scriptsize $1,1,1$}};
\draw[orange!90!black] (-.125,.425,1) node[below]{{\scriptsize $1,2,1$}};
\draw[orange!90!black] (-.125,.675,1) node[below]{{\scriptsize $1,3,1$}};
\draw[magenta!90!black] (-.25,1,.1) node[below]{{\scriptsize $1,n^\theta,3$}};
\draw[magenta!90!black] (-.25,1,.4) node[below]{{\scriptsize $1,n^\theta,2$}};
\draw[magenta!90!black] (-.25,1,.7) node[below]{{\scriptsize $1,n^\theta,1$}};
\draw[magenta!90!black] (.35,.99,1) node[below]{{\scriptsize $2,n^\theta,1$}};
\draw[magenta!90!black] (.65,.99,1) node[below]{{\scriptsize $3,n^\theta,1$}};
\draw[cyan!90!black] (.875,.125,1) node{{\scriptsize $n^r,1,1$}};
\draw[cyan!90!black] (1.175,0,.675) node{{\scriptsize $n^r,1,2$}};
\draw[cyan!90!black] (1.175,0,.475) node{{\scriptsize $n^r,1,3$}};
\draw[cyan!90!black] (.875,.375,1) node{{\scriptsize $n^r,2,1$}};
\draw[cyan!90!black] (.875,.6,1) node{{\scriptsize $n^r,3,1$}};

\draw[-stealth]  (1.65,1.35) to [in=60,out=180] (.5,1.15);
\node at (.75,1.5) {$(\bbol{E}^{(2)})^T$};
\draw[-stealth] (2.6,.7) to [out=270,in=90] (2.1,.5);
\draw[-stealth] (2.6,.7) to [out=270,in=90] (3.1,.5);
\node at (2.75,-.25) {
\begin{tikzpicture}[scale=4]
\draw (.135,0) -- (-.067,.116);
\draw[dashed] (.135,0) -- (-.067,.116);
\draw (.135,0) -- (-.067,-.116);
\draw[dashed] (.135,0) -- (-.067,-.116);
\draw (0,.04) node{{\scriptsize \textcolor{cyan!50!magenta}{$1$}}};
\draw (0,.-.04) node{{\scriptsize \textcolor{cyan!50!magenta}{$2$}}};
\draw (0,0) circle (.135);
\draw (-.303,.175) -- (0,.35);
\draw (0,.35) -- (.303,.175);
\draw (-.303,-.175) -- (-.303,.175);
\draw[dashed] (-.303,-.175) -- (0,-.35) -- (.303,-.175) -- (.303,.175);
\draw (-.519,.3) -- (0,.6);
\draw (0,.6) -- (.519,.3);
\draw (-.519,-.3) -- (-.519,.3);
\draw[dashed] (-.519,-.3) -- (0,-.6) -- (.519,-.3) -- (.519,.3);
\draw (0,.135) --node[right]{{\scriptsize \textcolor{cyan!90!black}{$3$}}} (0,.35);
\draw (0,.35) --node[right, yshift = -0.15cm]{{\scriptsize \textcolor{cyan!90!black}{$n^\theta + 3$}}} (0,.6);
\draw[dashed] (0,.7) -- (0,.6);
\draw[dashed] (0,-.135) -- (0,-.7);
\draw (-.116,.067) --node[yshift = 0.15cm, xshift = 0.15cm]{{\scriptsize \textcolor{cyan!90!black}{$4$}}} (-.303,.175);
\draw (-.303,.175) --node[yshift = 0.15cm, xshift = 0.5cm]{{\scriptsize \textcolor{cyan!90!black}{$n^\theta + 4$}}} (-.519,.3);
\draw[dashed] (-.596,.345)--(-.519,.3);
\draw (.116,.067) --node[yshift = -0.15cm, xshift = 0.25cm]{{\scriptsize \textcolor{cyan!90!black}{$n^\theta + 2$}}} (.303,.175);
\draw (.303,.175) --node[yshift = -0.25cm, xshift = 0.25cm]{{\scriptsize \textcolor{cyan!90!black}{$2n^\theta + 2$}}}  (.519,.3);
\draw[dashed] (.596,.345)--(.519,.3);
\draw (-.116,-.067) --node[yshift = 0.25cm]{{\scriptsize \textcolor{cyan!90!black}{$5$}}} (-.303,-.175);
\draw (-.303,-.175) --node[yshift = 0.35cm]{{\scriptsize \textcolor{cyan!90!black}{$n^\theta + 5$}}} (-.519,-.3);
\draw[dashed] (-.596,-.345)--(-.519,-.3);
\draw[dashed] (.596,-.345) -- (.116,-.067);
\node at (.425,-.55) {{\scriptsize $+(k-1)\bar{n}_1$}};
\end{tikzpicture}
\begin{tikzpicture}[scale=4]
\draw (0,0) circle (.135);
\draw (0,.35) -- node[xshift = -0.25, yshift = 0.25cm]{{\scriptsize\rotatebox{30}{\textcolor{magenta!90!black}{$2$}}}}(-.303,.175);
\draw (.303,.175) --node[xshift = 0.25, yshift =  0.25cm]{{\scriptsize \rotatebox{-30}{\textcolor{magenta!90!black}{$1$}}}} (0,.35);
\draw (-.303,.175) --node[left]{{\scriptsize \rotatebox{90}{\textcolor{magenta!90!black}{$3$}}}} (-.303,-.175);
\draw[dashed] (-.303,-.175) -- (0,-.35) -- (.303,-.175) -- (.303,.175);
\draw  (0,.6) --node[xshift = -0.25, yshift = 0.25cm]{{\scriptsize\rotatebox{30}{\textcolor{magenta!90!black}{$n^\theta + 2$}}}} (-.519,.3);
\draw  (.519,.3) --node[xshift = 0.25, yshift =  0.25cm]{{\scriptsize \rotatebox{-30}{\textcolor{magenta!90!black}{$n^\theta + 1$}}}} (0,.6);
\draw (-.519,.3) --node[left]{{\scriptsize \rotatebox{90}{\textcolor{magenta!90!black}{$n^\theta + 3$}}}} (-.519,-.3);
\draw[dashed] (-.519,-.3) -- (0,-.6) -- (.519,-.3) -- (.519,.3);
\draw (0,.135) -- (0,.35);
\draw (0,.35) -- (0,.6);
\draw[dashed] (0,.7) -- (0,.6);
\draw[dashed] (0,-.135) -- (0,-.7);
\draw (-.116,.067) -- (-.303,.175);
\draw (-.303,.175) -- (-.519,.3);
\draw[dashed] (-.596,.345)--(-.519,.3);
\draw (.116,.067) -- (.303,.175);
\draw (.303,.175) --  (.519,.3);
\draw[dashed] (.596,.345)--(.519,.3);
\draw (-.116,-.067) -- (-.303,-.175);
\draw (-.303,-.175) -- (-.519,-.3);
\draw[dashed] (-.596,-.345)--(-.519,-.3);
\draw[dashed] (.596,-.345) -- (.116,-.067);
\node at (.425,-.55) {{\scriptsize $+(k-1)\bar{n}_1$}};
\node at (.425,-.65) {{\scriptsize $+2 + (n^r - 2)n^\theta$}};
\end{tikzpicture}
};
\node at (2.75, -1.75) {
\begin{tikzpicture}[scale=4]
\draw (-.303,.175) -- (0,.35);
\draw (0,.35) -- (.303,.175);
\draw (-.303,-.175) -- (-.303,.175);
\draw[dashed] (-.303,-.175) -- (0,-.35) -- (.303,-.175) -- (.303,.175);
\draw (0,0) circle (.135);
\draw (-.519,.3) -- (0,.6);
\draw (0,.6) -- (.519,.3);
\draw (-.519,-.3) -- (-.519,.3);
\draw[dashed] (-.519,-.3) -- (0,-.6) -- (.519,-.3) -- (.519,.3);
\draw (0,.135) -- (0,.35);
\draw (0,.35) -- (0,.6);
\draw[dashed] (0,.7) -- (0,.6);
\draw[dashed] (0,-.135) -- (0,-.7);
\draw (-.116,.067) -- (-.303,.175);
\draw (-.303,.175) -- (-.519,.3);
\draw[dashed] (-.596,.345)--(-.519,.3);
\draw (.116,.067) -- (.303,.175);
\draw (.303,.175) -- (.519,.3);
\draw[dashed] (.596,.345)--(.519,.3);
\draw (-.116,-.067) -- (-.303,-.175);
\draw (-.303,-.175) -- (-.519,-.3);
\draw[dashed] (-.596,-.345)--(-.519,-.3);
\draw[dashed] (.596,-.345) -- (.116,-.067);
\node at (-.1,.2) {{\scriptsize \textcolor{orange!90!black}{$1$}}};
\node at (.1,.2) {{\scriptsize \textcolor{orange!90!black}{$n^\theta$}}};
\node at (-.2,0) {{\scriptsize \textcolor{orange!90!black}{$2$}}};
\node at (-.2,.35) {{\scriptsize \textcolor{orange!90!black}{$1+n^\theta$}}};
\node at (.2,.35) {{\scriptsize \textcolor{orange!90!black}{$2n^\theta$}}};
\node at (-.4,0) {{\scriptsize \textcolor{orange!90!black}{$2+n^\theta$}}};
\node at (.5,-.5) {{\scriptsize $+n^\varphi\bar{n}_1$}};
\node at (.5,-.6) {{\scriptsize $+(k-1)\bar{n}_2$}};
\end{tikzpicture}
};
\draw (2.6, 0.7) to[out = -90, in = 90] (2.69, .35);
\draw[-stealth] (2.69, .35) -- (2.69, -0.85);
\end{tikzpicture}}
\caption{Geometric interpretation of the DOFs in the curl spaces $\SSS^{p^r,p^\theta-1,p^\varphi-1}\times\SSS^{p^r-1,p^\theta,p^\varphi-1}\times\SSS^{p^r-1,p^\theta-1,p^\varphi}$ (left) and $V_2$ (right). In the former space, the DOFs are associated to the oriented faces of a tensor mesh $\cM$. As we have already pointed out in Figure \ref{dof2}, there should be faces going outward of $\cM$ in the left and back sides because of the periodicity of the spaces. However we have omitted them for the sake of readability. In $V_2$ instead, the DOFs are represented as oriented faces of the polygonal ring $\cR$ defined in Section \ref{geometricinterpretation}. In the top-right part of the figure we show in a box some sampled  faces of $\cR$. In the middle figures we show the numbering of the faces corresponding to the DOFs in $V_2$ from the point of view of the $k$th joint of $\cR$, with $k \in \{1, \ldots, n^\varphi\}$. In particular the faces between the $k$th joint and the $(k+1)$th joint are seen as edges from this perspective. When considering a function in $V_2$, one can get its representation in terms of the basis functions of $\SSS^{p^r,p^\theta-1,p^\varphi-1}\times\SSS^{p^r-1,p^\theta,p^\varphi-1}\times\SSS^{p^r-1,p^\theta-1,p^\varphi}$ by applying the transpose of the matrix $\bbol{E}^{(2)}$ defined in \eqref{eq:E2}.}\label{dof22} 
\end{figure}
\begin{prop}\label{commutation2}
Let $\{h_{ijk}^1\}_{i,j,k=1}^{n^r,n^\theta,n^\varphi},\{h_{ijk}^2\}_{i,j,k=1}^{n^r-1,n^\theta,n^\varphi},\{h_{ijk}^3\}_{i,j,k=1}^{n^r-1,n^\theta,n^\varphi}$ be a general set of DOFs in $\im(\curl) \subseteq\SSS^{p^r,p^\theta-1,p^\varphi-1}\times\SSS^{p^r-1,p^\theta,p^\varphi-1}\times\SSS^{p^r-1,p^\theta-1,p^t}$. Then there exists a set of DOFs $\{g_{ijk}^1\}_{i,j,k=1}^{n^r-1,n^\theta,n^\varphi}$, $\{g_{ijk}^2\}_{i,j,k=1}^{n^r,n^\theta,n^\varphi}$, $\{g_{ijk}^3\}_{i,j,k=1}^{n^r,n^\theta,n^\varphi}$ in $\SSS^{p^r-1,p^\theta,p^\varphi}\times\SSS^{p^r,p^\theta-1,p^\varphi}\times\SSS^{p^r,p^\theta,p^\varphi-1}$, a set of DOFs $\{g_\ell\}_{\ell=1}^{n_1}$ a general set of DOFs in $V_1$ and a collection of numbers $\{h_\ell\}_{\ell=1}^{n_2}$ such that Equation \eqref{dof2relation} holds true, for which the following diagram commutes:
$$
\begin{tikzpicture}
\matrix (m)[matrix of math nodes,column sep=14em,row sep=3em]{
\left\{\begin{array}{c}g_{ijk}^1\\\\g_{ijk}^2\\\\g_{ijk}^3\end{array}\right\}_{i,j,k} \pgfmatrixnextcell \left\{\begin{array}{c}h_{ijk}^1\\\\h_{ijk}^2\\\\h_{ijk}^3\end{array}\right\}_{i,j,k}\\ \{g_\ell\}_{\ell} \pgfmatrixnextcell \{h_\ell\}_{\ell}\\
};
\draw[-stealth] (m-1-1) --node[above]{{\scriptsize$\begin{bmatrix} 0 & -\bbol{D}^{(0,0,1)} & \bbol{D}^{(0,1,0)}\\ \bar{\bbol{D}}^{(0,0,1)} & 0 & -\bbol{D}^{(1, 0, 0)}\\ -\bar{\bbol{D}}^{(0,0,1)} & \bbol{D}^{(1, 0, 0)} & 0\end{bmatrix}$}} (m-1-2);
\draw[-stealth] (m-2-1) --node[below]{$\bbol{D}^{(1)}$} (m-2-2);
\draw[-stealth] (m-2-1) --node[left]{$(\bbol{E}^{(1)})^T$} (m-1-1);
\draw[-stealth] (m-2-2) --node[right]{$(\bbol{E}^{(2)})^T$} (m-1-2);
\draw[-stealth,dashed] plot [smooth] coordinates {(-3.5, -1.5) (-2.65, 0.5) (2.5, 0.7)};
\draw[-stealth,dashed] plot [smooth] coordinates {(-3.25, -1.65) (2.5, -1.5) (3.35, -0.35)};
\end{tikzpicture}
$$
\end{prop}
One can prove Proposition \ref{commutation2} using an analogous argument of the proof of Proposition \ref{commutation1}. The interested reader is referred to Appendix \ref{app:commutation12}.

We can move to the definitions of $V_2$. We introduce the following collections of $n_2$ functions, \begin{equation}\label{N011}
\m{N}^{(0, 1, 1)} \defeq \bbol{E}^{(2)}\begin{bmatrix}
\m{B}^{(0, 1, 1)}\\ \pmb{0} \\ \pmb{0}
\end{bmatrix}, \qquad \m{N}^{(1, 0, 1)} \defeq \bbol{E}^{(2)} \begin{bmatrix}
\pmb{0}\\ \m{B}^{(1, 0, 1)} \\ \pmb{0}
\end{bmatrix}, \qquad \m{N}^{(1, 1, 0)} \defeq \bbol{E}^{(2)} \begin{bmatrix}
\pmb{0}\\ \pmb{0} \\ \m{B}^{(1, 1, 0)}\end{bmatrix}.
\end{equation}
By looking at the structure of $\bbol{E}^{(2)}$, reported in Equation \eqref{eq:E2}, one easily recognizes that there are zero functions in $\m{N}^{(0,1,1)}, \m{N}^{(1,0,1)}$ and $\m{N}^{(1,1,0)}$. However, there exists no index $\ell$ in $\{1, \ldots, n_2\}$ such that $N_\ell^{(0,1,1)}, N_\ell^{(1,0,1)}$ and $N_\ell^{(1,1,0)}$ are all zero. Furthermore, by using an analogous argument of the proof of Proposition \ref{prop:linind1}, one shows the independence of the non-zero functions.
\begin{prop}\label{prop:linind2}
The non-zero functions in $\m{N}^{(0,1,1)}, \m{N}^{(1,0,1)}$ and $\m{N}^{(1,1,0)}$, defined in Equation \eqref{N011}, are linearly independent.
\end{prop} 
We define the space $V_2$ as the span of the following linearly independent vector functions
\begin{equation}\label{N2}
N_\ell^{(2)} := \left(\begin{array}{c}
N_\ell^{(0,1,1)}\\\\N_\ell^{(1,0,1)}\\\\N_\ell^{(1,1,0)}
\end{array}\right)\quad \text{for }\ell=1, \ldots, n_2.
\end{equation}
Thanks to Remark \ref{linearalgebraremark}, Proposition \ref{commutation2} proves commutation in the following diagram:
\begin{cor}\label{cor2}
The diagram 
$$
\begin{tikzpicture}
\matrix (m)[matrix of math nodes,column sep=3em,row sep=3em]{
\SSS^{p^r-1,p^s,p^t}\times\SSS^{p^r,p^s-1,p^t}\times\SSS^{p^r,p^s,p^t-1} \pgfmatrixnextcell \SSS^{p^r,p^s-1,p^t-1}\times\SSS^{p^r-1,p^s,p^t-1}\times\SSS^{p^r-1,p^s-1,p^t} \\ V_1 \pgfmatrixnextcell V_2\\
};

\draw[-stealth] (m-1-1) --node[above]{$\curl$} (m-1-2);
\draw[-stealth] (m-2-1) --node[below]{$\curl$} (m-2-2);
\draw[-stealth] (m-2-1) --node[left]{$\id$} (m-1-1);
\draw[-stealth] (m-2-2) --node[right]{$\id$} (m-1-2);
\draw[-stealth, dashed] plot [smooth] coordinates {(-3.85, -0.5) (-3.25, 0.5) (0, 0.65)};
\draw[-stealth, dashed] plot [smooth] coordinates {(-3.75, -0.75) (2.25, -0.65) (3.25, 0.5)};
\end{tikzpicture}
$$
commutes and the pushforward of the functions in $V_2$, namely $\cF^2(N_\ell^{(2)})$ for $\ell=1, \ldots, n_2$, with the operator $\cF^2$ defined as in \eqref{pushforward}, are $C^0$ on $\Omega^{pol}$.
\end{cor}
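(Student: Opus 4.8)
The plan is to follow the template of Corollary~\ref{cor1}: derive the commutation of the function-level square by lifting the three coefficient-level diagrams of Proposition~\ref{commutation2} through Remark~\ref{linearalgebraremark}, and obtain the $C^0$-regularity of the pushforwards from local exactness of the discrete curl at the polar curve.

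First I would make the two paths of the square explicit on coefficients. For $g\in V_1$ with $g=\pmb{N}^{(1)}\cdot\pmb{g}$, the bottom-then-right path gives $\curl\,g=\pmb{N}^{(2)}\cdot D^{(1)}\pmb{g}$, with $D^{(1)}$ the matrix of \eqref{hl} encoding the curl action on the DOFs of $V_1$. For the right-then-bottom path, Remark~\ref{linearalgebraremark} together with \eqref{N100} and \eqref{dof1relation} identifies $\cE^1(g)$ inside the tensor-product space as the element whose three component coefficient vectors are $(E^{(1,0,0)})^T\pmb{g}$, $(E^{(0,1,0)})^T\pmb{g}$ and $(E^{(0,0,1)})^T\pmb{g}$; applying the tensor-product curl and re-expressing the result in the basis of $V_2$ by means of \eqref{dof2relation}, i.e.\ via the transposes of $E^{(0,1,1)},E^{(1,0,1)},E^{(1,1,0)}$, produces precisely the three component identities asserted in diagrams~(a)--(c) of Proposition~\ref{commutation2}. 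Reading those three identities simultaneously shows that the two coefficient vectors attached to $\curl\,g$ coincide, so by Remark~\ref{linearalgebraremark} the square commutes at the level of functions.

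Since Proposition~\ref{commutation2} has already discharged the case analysis, this lift requires no new computation; the only care needed is bookkeeping, namely keeping track of the zero functions present in $\pmb{N}^{(0,1,1)},\pmb{N}^{(1,0,1)},\pmb{N}^{(1,1,0)}$ and of the exceptional faces abutting the central cylinders, where the combinations weighted by the block $\bar{E}$ of \eqref{E0} intervene. I would simply note that no index $\ell$ makes all three of $N_\ell^{(0,1,1)},N_\ell^{(1,0,1)},N_\ell^{(1,1,0)}$ vanish, so the coefficient identities are non-degenerate.

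The hard part will be the regularity claim, which I would argue exactly as in Corollary~\ref{cor1}. Away from the polar curve the pushforward B-splines are $C^1$, hence each $\cF^2(N_\ell^{(2)})$ is $C^0$ there. At the polar curve the reduced spaces were designed so that the discrete curl is locally exact, meaning that in a neighbourhood of the polar curve every element of $V_2^{pol}$ is the curl of an element of $V_1^{pol}$; combined with the fact that the functions of $V_1^{pol}$ are $C^0$ and inherit their regularity from the $C^1$ polar splines of $V_0^{pol}$ (Corollary~\ref{cor1}), this forces the $\cF^2(N_\ell^{(2)})$ to tie in continuously across the polar curve. The delicate point, and the one I would check against the construction in \cite{deepesh}, is that only \emph{local} exactness is available, so the continuity must be established pointwise along the polar curve and then glued to the off-curve $C^1$ behaviour to conclude $C^0$-continuity on all of $\Omega^{pol}$.
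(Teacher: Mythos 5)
Your proposal takes essentially the same route as the paper, whose proof of Corollary \ref{cor2} consists precisely of adapting the proof of Corollary \ref{cor1}: the commutation of the square is lifted from the coefficient-level diagrams of Proposition \ref{commutation2} via Remark \ref{linearalgebraremark}, and the $C^0$-continuity of the pushforwards $\cF^2(N_\ell^{(2)})$ is deduced from the local exactness of the discrete complex at the polar curve. Your extra bookkeeping remarks (the zero functions in $\pmb{N}^{(0,1,1)},\pmb{N}^{(1,0,1)},\pmb{N}^{(1,1,0)}$, the exceptional faces at the central cylinders, and the local-versus-global exactness caveat) are sound elaborations of points the paper leaves implicit.
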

Corollary \ref{cor2} can be proved adapting the arguments used in the proof of Corollary \ref{cor1}.

Hence, given a function $\pmb{g}\in V_1$, $\pmb{g} = \m{N}^{(1)}\cdot \m{g}$, we have the following representation of $\curl\,\pmb{g}$ in terms of the basis of $V_2$: $$
\curl\,\pmb{g} = \m{N}^{(2)} \cdot \bbol{D}^{(1)}\m{g} = \left(\begin{array}{c}
\m{N}^{(0,1,1)} \cdot \bbol{D}^{(1)}\m{g}\\\\
\m{N}^{(1,0,1)} \cdot \bbol{D}^{(1)}\m{g}\\\\
\m{N}^{(1,1,0)} \cdot \bbol{D}^{(1)}\m{g}
\end{array}\right) \in V_2.
$$
\subsection{The divergence of functions in $V_2$ and the space $V_3$}
Let $\pmb{h} \in \SSS^{p^r,p^\theta-1,p^\varphi-1}\times\SSS^{p^r-1,p^\theta,p^\varphi-1}\times\SSS^{p^r-1,p^\theta-1,p^\varphi}$, $$
\pmb{h} = \left(\begin{array}{c}
h^1\\\\h^2\\\\h^3
\end{array}\right) = \left(\begin{array}{c}
\m{B}^{(0,1,1)}\cdot \m{h}^1\\\\\m{B}^{(1,0,1)}\cdot \m{h}^2\\\\\m{B}^{(1,1,0)}\cdot \m{h}^3
\end{array}\right)
$$
with $\m{h}^1, \m{h}^2, \m{h}^3$ the vectorizations of the coefficients $\{h_{ijk}^1\}_{i,j,k=1}^{n^r,n^\theta,n^\varphi}, \{h_{ijk}^2\}_{i,j,k=1}^{n^r-1,n^\theta,n^\varphi}$ and $\{h_{ijk}^3\}_{i,j,k=1}^{n^r-1,n^\theta,n^\varphi}$ obtained by running the indices first on $j$, then on $i$ and then on $k$. The divergence of $\pmb{h}$ has the following expression:
$$
\div \pmb{h} = \sum_{k=1}^{n^\varphi}\sum_{j=1}^{n^\theta}\sum_{i=1}^{n^r-1} (h_{(i+1)jk}^1-h_{ijk}^1 + h_{i(j+1)k}^2 - h_{ijk}^2 + h_{ij(k+1)}^3- h_{ijk}^3)D_i^{p^r}\per{D}_j^{p^\theta}\per{D}_k^{p^\varphi}
$$
where the functions $D_i^{p^r}, \per{D}_j^{p^\theta}$ and $\per{D}_k^{p^\varphi}$ for any $i,j,k$ have been introduced in Section \ref{preliminaries}. Let $\m{B}^{(1,1,1)}$ be the vectorization of $\{D_i^{p^r}\per{D}_j^{p^\theta}\per{D}_k^{p^\varphi}\}_{i,j,k=1}^{n^r-1,n^\theta,n^\varphi}$. The divergence operator $\div:\SSS^{p^r,p^\theta-1,p^\theta-1}\times\SSS^{p^r-1,p^\theta,p^\varphi-1}\times\SSS^{p^r-1,p^\theta-1,p^\varphi} \to \SSS^{p^r-1,p^\theta-1,p^\varphi-1}$ has the following matrix representation:
$$
\div \pmb{h} =  \m{B}^{(1,1,1)} \cdot [\bbol{D}^{(1,0,0)}\m{h}^1 + \bbol{D}^{(0,1,0)}\m{h}^2 + \bbol{D}^{(0,0,1)}\m{h}^3] \quad \forall\, \pmb{h} \in \SSS^{p^r,p^\theta-1,p^\varphi-1}\times\SSS^{p^r-1,p^\theta,p^\varphi-1}\times\SSS^{p^r-1,p^\theta-1,p^\varphi},
$$
where the matrices $\bbol{D}^{(1,0,0)},\bbol{D}^{(0,1,0)}$ and $\bbol{D}^{(0,0,1)}$ have been defined in Equation \eqref{D100}. The DOFs in $\im(\div)$ have form
\begin{equation}\label{eq:divh}
m_{ijk} := h_{(i+1)jk}^1-h_{ijk}^1 + h_{i(j+1)k}^2 - h_{ijk}^2 + h_{ij(k+1)}^3- h_{ijk}^3
\end{equation}
for $i=1, \ldots, n^r-1$, $j=1, \ldots, n^\theta$ and $k=1, \ldots, n^\varphi$, for some set of DOFs $\{h_{ijk}^1\}_{i,j,k=1}^{n^r,n^\theta,n^\varphi}$, $\{h_{ijk}^2\}_{i,j,k=1}^{n^r-1,n^\theta,n^\varphi}$, $\{h_{ijk}^3\}_{i,j,k=1}^{n^r-1,n^\theta,n^\varphi}$ in $\SSS^{p^r,p^\theta-1,p^\varphi-1}\times\SSS^{p^r-1,p^\theta,p^\varphi-1}\times\SSS^{p^r-1,p^\theta-1,p^\varphi}$. In the geometric interpretation proposed in Section \ref{geometricinterpretation}, $h_{ijk}^1, h_{(i+1)jk}^1, h_{ijk}^2, h_{i(j+1)k}^2, h_{ijk}^3, h_{ij(k+1)}^3$ correspond to the faces of a tensor mesh $\cM$ enclosing the volume related to $m_{ijk}$. Therefore, from this geometric point of view, the divergence operator maps oriented faces enclosing a volume to such volume, as shown in Figure \ref{divtensor}.
\begin{figure}
\centering
\begin{tikzpicture}[scale=3]
\foreach \x in{0,...,4}{
\draw (0,\x*.25,1) -- (.75,\x*.25 ,1);
\draw[dashed] (.75,\x*.25 ,1)--(1,\x*.25 ,1);
\draw (\x*.25 ,0,1) -- (\x*.25 ,.75,1);
\draw[dashed] (\x*.25 ,.75,1)--(\x*.25 ,1,1);
\draw (1,\x*.25 ,1) -- (1,\x*.25 ,.25);
\draw[dashed] (1,\x*.25 ,.25)--(1,\x*.25 ,0);
\draw (\x*.25 ,1,1) -- (\x*.25 ,1,.25);
\draw[dashed] (\x*.25 ,1,.25)--(\x*.25 ,1,0);
\draw (1,0,\x*.25 ) -- (1,.75,\x*.25 );
\draw[dashed] (1,.75,\x*.25 )--(1,1,\x*.25 );
\draw (0,1,\x*.25 ) -- (.75,1,\x*.25 );
\draw[dashed] (.75,1,\x*.25 )--(1,1,\x*.25 );
}
\foreach \x in {0,1,2}{
\foreach \y in {0,1,2}{
\node at (\x*.25+.125,\y*.25+.125,1) {\textcolor{orange!90!black}{\acpatch}};
\node at (1,\x*.25+.125,1-\y*.25 -.075) {{\scriptsize\rotslant{45}{45}{\textcolor{cyan!90!black}{\cpatch}}}};
\node at (\x*.25+.1,1,1-\y*.25-.13) {{\scriptsize \rotslant{-30}{45}{\textcolor{magenta!90!black}{\cpatch}}}};
}}

\draw[orange!90!black] (.375,0,1) node[below]{{\scriptsize $2,1,1$}};
\draw[orange!90!black] (.625,0,1) node[below]{{\scriptsize $3,1,1$}};
\draw[orange!90!black] (-.125,.175,1) node[below]{{\scriptsize $1,1,1$}};
\draw[orange!90!black] (-.125,.425,1) node[below]{{\scriptsize $1,2,1$}};
\draw[orange!90!black] (-.125,.675,1) node[below]{{\scriptsize $1,3,1$}};
\draw[magenta!90!black] (-.25,1,.1) node[below]{{\scriptsize $1,n^\theta$,3}};
\draw[magenta!90!black] (-.25,1,.4) node[below]{{\scriptsize $1,n^\theta$,2}};
\draw[magenta!90!black] (-.25,1,.7) node[below]{{\scriptsize $1,n^\theta$,1}};
\draw[magenta!90!black] (.35,.99,1) node[below]{{\scriptsize $2,n^\theta$,1}};
\draw[magenta!90!black] (.65,.99,1) node[below]{{\scriptsize $3,n^\theta$,1}};
\draw[cyan!90!black] (.875,.125,1) node{{\scriptsize $n^r,1,1$}};
\draw[cyan!90!black] (1.175,0,.675) node{{\scriptsize $n^r,1,2$}};
\draw[cyan!90!black] (1.175,0,.475) node{{\scriptsize $n^r,1,3$}};
\draw[cyan!90!black] (.875,.375,1) node{{\scriptsize $n^r,2,1$}};
\draw[cyan!90!black] (.875,.6,1) node{{\scriptsize $n^r,3,1$}};
\draw[-stealth] (1.15,.5) --node[above]{$\left[\textcolor{cyan!90!black}{\bbol{D}^{(1,0,0)}}~\textcolor{magenta!90!black}{\bbol{D}^{(0,1,0)}}~\textcolor{orange!90!black}{\bbol{D}^{(0,0,1)}}\right]$} (2.65,.5);

\foreach \x in{0,...,4}{
\draw (3.15,\x*.25,1) -- (3.15+.75,\x*.25 ,1);
\draw[dashed] (3.15+.75,\x*.25 ,1)--(4.15,\x*.25 ,1);
\draw (3.15+\x*.25 ,0,1) -- (3.15+\x*.25 ,.75,1);
\draw[dashed] (3.15+\x*.25 ,.75,1)--(3.15+\x*.25 ,1,1);
\draw (3.15+1,\x*.25 ,1) -- (3.15+1,\x*.25 ,.25);
\draw[dashed] (3.15+1,\x*.25 ,.25)--(3.15+1,\x*.25 ,0);
\draw (3.15+\x*.25 ,1,1) -- (3.15+\x*.25 ,1,.25);
\draw[dashed] (3.15+\x*.25 ,1,.25)--(3.15+\x*.25 ,1,0);
\draw (3.15+1,0,\x*.25 ) -- (3.15+1,.75,\x*.25 );
\draw[dashed] (3.15+1,.75,\x*.25 )--(3.15+1,1,\x*.25 );
\draw (3.15,1,\x*.25 ) -- (3.15+.75,1,\x*.25 );
\draw[dashed] (3.15+.75,1,\x*.25 )--(3.15+1,1,\x*.25 );
}
\foreach \x in {0,...,3}{
\foreach \y in {0,...,3}{
\draw (3.15+\x*.25,\y*.25,1) -- (3.15+\x*.25,\y*.25,.75);
}
\draw (3.15+\x*.25,0,.75) -- (3.15+\x*.25,.75,.75);
\draw[dashed] (3.15+\x*.25,.75,.75) -- (3.15+\x*.25,1,.75);
\draw (3.15,\x*.25,.75) -- (3.15+.75,\x*.25,.75);
\draw[dashed] (3.15+.75,\x*.25,.75) -- (3.15+1,\x*.25,.75);
}
\foreach \x in {0,...,2}{
\foreach \y in {0,...,2}{
\fill[shading=ball,ball color=black] (3.15+\x*.25+.125,\y*.25+.125,.825) circle (.05);
}}
\draw (3.15+.375,0,1) node[below]{{\scriptsize $2,1,1$}};
\draw (3.15+.625,0,1) node[below]{{\scriptsize $3,1,1$}};
\draw (3.15-.125,.175,1) node[below]{{\scriptsize $1,1,1$}};
\draw (3.15-.125,.425,1) node[below]{{\scriptsize $1,2,1$}};
\draw (3.15-.125,.675,1) node[below]{{\scriptsize $1,3,1$}};
\draw (3.15-.25,1,.1) node[below]{{\scriptsize $1,n^\theta$,3}};
\draw (3.15-.25,1,.4) node[below]{{\scriptsize $1,n^\theta$,2}};
\draw (3.15+1.175,0,.675) node{{\scriptsize $n^r,1,2$}};
\draw (3.15+1.175,0,.475) node{{\scriptsize $n^r,1,3$}};
\end{tikzpicture}
\caption{The action of the divergence operator when associating the DOFs in $\SSS^{p^r,p^\theta-1,p^\varphi-1}\times\SSS^{p^r-1,p^\theta,p^\varphi-1}\times\SSS^{p^r-1,p^\theta-1,p^\varphi}$ to the oriented faces of a tensor mesh $\cM$, as shown on the left side of the figure. In this geometric interpretation, the divergence maps the six faces enclosing a volume in $\cM$ to such volume. For the periodicity of the spaces in the $\theta$ and $\varphi$ directions, on the left and back sides of the tensor meshes shown in the figure there should be faces going outward, in the mesh on the left, and an extra block of volume in each of these sides, in the mesh on the right. We have chosen to omit them for the sake of readability of the figure.}\label{divtensor}
\end{figure} 

We want the same action onto the DOFs of $V_2$. This time the latter are associated to the oriented faces of the polygonal ring $\cR$ defined in Section \ref{geometricinterpretation}. When applying the divergence operator, we should map such faces to the volumes they enclose as shown in Figure \ref{divV}. The total number of volumes in $\cR$ is $n_3 \defeq n^\varphi\bar{n}_2 = n^\varphi n^\theta(n^r - 2)$, as each face in a joint of $\cR$ corresponds to a volume in the side between such joint and the next joint. 
\begin{figure}
\centering
\begin{tikzpicture}
\node at (0,0) {\includegraphics[width=.1\textwidth]{dof21}};
\node at (2,0) {\includegraphics[width=.1\textwidth]{dof22}};
\node at (4,0) {\includegraphics[width=.1\textwidth]{dof23}};
\node at (6.5,0) {\includegraphics[width=.1\textwidth]{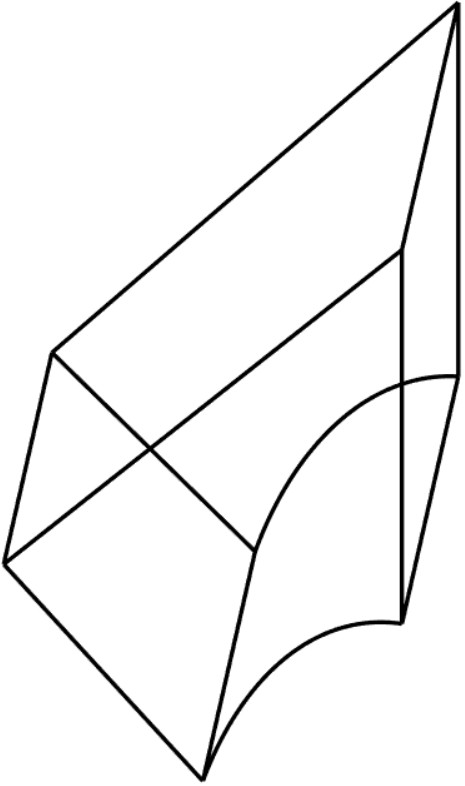}};
\node at (8.5,0) {\includegraphics[width=.1\textwidth]{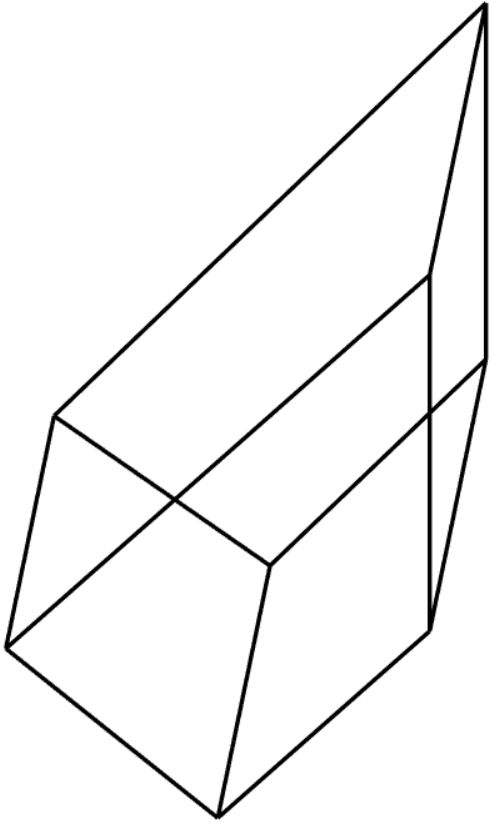}};
\draw[-stealth] (5,0) --node[above]{$\bbol{D}^{(2)}$} (5.75,0);
\node at (.0,-.2) {\scalebox{1.25}{\rotslant{2}{50}{$\pmb{\circlearrowright}$}}};
\node at (.05,.2) {\scalebox{1.35}{\rotslant{-90}{20}{$\pmb{\circlearrowright}$}}};
\node at (2,-.65) {\scalebox{1.75}{$\pmb{\circlearrowleft}$}}; 
\node at (4,-.65) {\scalebox{1.75}{$\pmb{\circlearrowleft}$}}; 
\node at (2.6,.1) {\rotslant{55}{52}{$\pmb{\circlearrowright}$}};
\node at (4.6,.1) {\rotslant{55}{52}{$\pmb{\circlearrowright}$}};
\node at (1.85,.25) {\scalebox{1.25}{\rotslant{15}{50}{$\pmb{\circlearrowleft}$}}};
\node at (3.85,.25) {\scalebox{1.25}{\rotslant{15}{50}{$\pmb{\circlearrowleft}$}}};
\fill[shading=ball, ball color=black] (6.65,-.35) circle (.25); 
\fill[shading=ball, ball color=black] (8.75,-.35) circle (.25); 
\node at (13,0) {
\begin{tikzpicture}[scale=3.5]
\draw (-.303,.175) -- (0,.35);
\draw (0,.35) -- (.303,.175);
\draw (-.303,-.175) -- (-.303,.175);
\draw[dashed] (-.303,-.175) -- (0,-.35) -- (.303,-.175) -- (.303,.175);
\draw (0,0) circle (.135);
\draw (-.519,.3) -- (0,.6);
\draw (0,.6) -- (.519,.3);
\draw (-.519,-.3) -- (-.519,.3);
\draw[dashed] (-.519,-.3) -- (0,-.6) -- (.519,-.3) -- (.519,.3);
\draw (0,.135) -- (0,.35);
\draw (0,.35) -- (0,.6);
\draw[dashed] (0,.7) -- (0,.6);
\draw[dashed] (0,-.135) -- (0,-.7);
\draw (-.116,.067) -- (-.303,.175);
\draw (-.303,.175) -- (-.519,.3);
\draw[dashed] (-.596,.345)--(-.519,.3);
\draw (.116,.067) -- (.303,.175);
\draw (.303,.175) -- (.519,.3);
\draw[dashed] (.596,.345)--(.519,.3);
\draw (-.116,-.067) -- (-.303,-.175);
\draw (-.303,-.175) -- (-.519,-.3);
\draw[dashed] (-.596,-.345)--(-.519,-.3);
\draw[dashed] (.596,-.345) -- (.116,-.067);
\draw (.135,0) -- (-.067,.116);
\draw (.135,0) -- (-.067,-.116);
\node at (-.1,.2) {{\scriptsize $1$}};
\node at (.1,.2) {{\scriptsize $n^\theta$}};
\node at (-.2,0) {{\scriptsize $2$}};
\node at (-.2,.35) {{\scriptsize $1+n^\theta$}};
\node at (.2,.35) {{\scriptsize $2n^\theta$}};
\node at (-.4,0) {{\fontsize{6}{5}\selectfont $2 + n^\theta$}};
\node at (.5,-.6) {{\scriptsize $+(k-1)\bar{n}_2$}};
\end{tikzpicture}};
\end{tikzpicture}
\caption{The action of the divergence operator when associating the DOFs in $V_2$ to the oriented faces of the polygonal ring $\cR$. In the divergence space the DOFs correspond to the volumes in $\cR$. For the sake of readability, in the left part of the figure we only show the different types of faces and volumes. $\bbol{D}^{(2)}$ is the matrix encoding the action of the divergence on the DOFs and it maps the faces enclosing a volume to such volume, in this geometric interpretation. On the right part of the figure we see the index numbering of the DOFs in the divergence space, associated to the volumes, from the point of view of the $k$th joint of $\cR$, for $k \in \{1, \ldots, n^\varphi\}$.  Each volume is seen as a face from this perspective.}\label{divV}
\end{figure}
Given $\pmb{h} \in V_2, \pmb{h}=\m{N}^{(2)}\cdot \m{h}$ for a set of coefficients $\m{h}=(h_1, \ldots, h_{n_2})^T$, the action of the divergence on the DOFs is expressed by the matrix $\bbol{D}^{(2)}$ of size $n_3\times n_2$ which has the following structure
\begin{equation}
\bbol{D}^{(2)} \defeq \begin{bmatrix}
\bbol{I}_{n^\varphi} \otimes \bbol{D}_J^{(1)} & \per{\bbol{D}}_{n^\varphi} \otimes \bbol{I}_{\bar{n}_0}
\end{bmatrix}
\end{equation}
Let now 
\begin{equation}\label{E3}
\bbol{E}^{(3)} = \bbol{I}_{n^\varphi} \otimes \bbol{E}_J^{(2)},
\end{equation} where $\bbol{E}_J^{(2)}$ is the matrix reported in Equation \eqref{eq:E2J}. $\bbol{E}^{(3)}$ has dimension $n_3 \times n^\varphi n^\theta(n^r-1)$ and it will provide the relation between the DOFs in $V_3$ and $\SSS^{p^r-1,p^\theta-1,p^\varphi-1}$, accordingly to Remark \ref{linearalgebraremark}, that is, given a set of DOFs in $V_3$, $\m{m} = \{m_\ell\}_{\ell=1}^{n_3}$, we have that
\begin{equation}\label{m}
\bar{\m{m}} = (\bbol{E}^{(3)})^T\m{m},
\end{equation}
where $\bar{\m{m}}$ is the vectorization of the DOFs of an element $\bar{m} \in \SSS^{p^r-1,p^\theta-1,p^\varphi-1}$, i.e., $\bar{m} = \m{B}^{(1,1,1)}\cdot \bar{\m{m}}$.
The geometric interpretation of this link between the DOFs in $V_3$ and in $\SSS^{p^r-1,p^\theta-1,p^\varphi-1}$ is shown in Figure \ref{dof3}.
\begin{figure}
\centering
\begin{tikzpicture}
\node at (0,0) {
\begin{tikzpicture}[scale=2.5]
\foreach \x in{0,...,4}{
\draw (3.15,\x*.25,1) -- (3.15+.75,\x*.25 ,1);
\draw[dashed] (3.15+.75,\x*.25 ,1)--(4.15,\x*.25 ,1);
\draw (3.15+\x*.25 ,0,1) -- (3.15+\x*.25 ,.75,1);
\draw[dashed] (3.15+\x*.25 ,.75,1)--(3.15+\x*.25 ,1,1);
\draw (3.15+1,\x*.25 ,1) -- (3.15+1,\x*.25 ,.25);
\draw[dashed] (3.15+1,\x*.25 ,.25)--(3.15+1,\x*.25 ,0);
\draw (3.15+\x*.25 ,1,1) -- (3.15+\x*.25 ,1,.25);
\draw[dashed] (3.15+\x*.25 ,1,.25)--(3.15+\x*.25 ,1,0);
\draw (3.15+1,0,\x*.25 ) -- (3.15+1,.75,\x*.25 );
\draw[dashed] (3.15+1,.75,\x*.25 )--(3.15+1,1,\x*.25 );
\draw (3.15,1,\x*.25 ) -- (3.15+.75,1,\x*.25 );
\draw[dashed] (3.15+.75,1,\x*.25 )--(3.15+1,1,\x*.25 );
}
\foreach \x in {0,...,3}{
\foreach \y in {0,...,3}{
\draw (3.15+\x*.25,\y*.25,1) -- (3.15+\x*.25,\y*.25,.75);
}
\draw (3.15+\x*.25,0,.75) -- (3.15+\x*.25,.75,.75);
\draw[dashed] (3.15+\x*.25,.75,.75) -- (3.15+\x*.25,1,.75);
\draw (3.15,\x*.25,.75) -- (3.15+.75,\x*.25,.75);
\draw[dashed] (3.15+.75,\x*.25,.75) -- (3.15+1,\x*.25,.75);
}
\foreach \x in {0,...,2}{
\foreach \y in {0,...,2}{
\fill[shading=ball,ball color=black] (3.15+\x*.25+.125,\y*.25+.125,.825) circle (.05);
}}
\end{tikzpicture}
};
\draw[-stealth] (3, 0) --node[above]{$(\bbol{E}^{(3)})^T$} (2, 0);
\node at (3.75, 0) {\includegraphics[width=.075\textwidth]{dof31}};
\node at (5.25, 0) {\includegraphics[width=.075\textwidth]{dof32}};
\fill[shading=ball,ball color=black] (3.9, -.35) circle (.2); 
\fill[shading=ball,ball color=black] (5.4,-.35) circle (.2); 
\node at (12.5, 0) {
\begin{tikzpicture}[scale=3.5]
\draw (-.303,.175) -- (0,.35);
\draw (0,.35) -- (.303,.175);
\draw (-.303,-.175) -- (-.303,.175);
\draw[dashed] (-.303,-.175) -- (0,-.35) -- (.303,-.175) -- (.303,.175);
\draw (0,0) circle (.135);
\draw (-.519,.3) -- (0,.6);
\draw (0,.6) -- (.519,.3);
\draw (-.519,-.3) -- (-.519,.3);
\draw[dashed] (-.519,-.3) -- (0,-.6) -- (.519,-.3) -- (.519,.3);
\draw (0,.135) -- (0,.35);
\draw (0,.35) -- (0,.6);
\draw[dashed] (0,.7) -- (0,.6);
\draw[dashed] (0,-.135) -- (0,-.7);
\draw (-.116,.067) -- (-.303,.175);
\draw (-.303,.175) -- (-.519,.3);
\draw[dashed] (-.596,.345)--(-.519,.3);
\draw (.116,.067) -- (.303,.175);
\draw (.303,.175) -- (.519,.3);
\draw[dashed] (.596,.345)--(.519,.3);
\draw (-.116,-.067) -- (-.303,-.175);
\draw (-.303,-.175) -- (-.519,-.3);
\draw[dashed] (-.596,-.345)--(-.519,-.3);
\draw[dashed] (.596,-.345) -- (.116,-.067);
\draw (.135,0) -- (-.067,.116);
\draw (.135,0) -- (-.067,-.116);
\node at (-.1,.2) {{\scriptsize $1$}};
\node at (.1,.2) {{\scriptsize $n^\theta$}};
\node at (-.2,0) {{\scriptsize $2$}};
\node at (-.2,.35) {{\scriptsize $1+n^\theta$}};
\node at (.2,.35) {{\scriptsize $2n^\theta$}};
\node at (-.4,0) {{\fontsize{6}{5}\selectfont $2 + n^\theta$}};
\node at (.375,-.6) {{\scriptsize $+(k-1)\bar{n}_2$}};
\end{tikzpicture}
};
\draw[-stealth] (10.35, 0) --node[above]{$(\bbol{E}_J^{(2)})^T$} (9.35, 0);
\node at (7.5, 0) {
\begin{tikzpicture}[scale=3]
\draw[step=.25] (0,0) grid (.75,.75);
\foreach \t in {0,...,3}{
\draw[dashed] (.75,\t*.25) -- (1,\t*.25);
\draw[dashed] (\t*.25,.75) -- (\t*.25,1);
}
\draw[dashed] (.75,1) -- (1,1);
\draw[dashed] (1,.75) -- (1,1);
\draw (0,1) -- (.75,1);
\draw (1,0) -- (1,.75);
\node at (.125,.125) {{\scriptsize $1,1,k$}};
\node at (.125,.375) {{\scriptsize $1,2,k$}};
\node at (.125,.625) {{\scriptsize $1,3,k$}};
\node at (.375,.125) {{\scriptsize $2,1,k$}};
\node at (.625,.125) {{\scriptsize $3,1,k$}};
\node at (.375,.375) {{\scriptsize $2,2,k$}};
\node at (.375,.625) {{\scriptsize $2,3,k$}};
\node at (.625,.625) {{\scriptsize $3,3,k$}};
\node at (.625,.375) {{\scriptsize $2,3,k$}};
\end{tikzpicture}
};
\end{tikzpicture}
\caption{Geometric interpretation of the DOFs in $\SSS^{p^r-1,p^\theta-1,p^\varphi-1}$ and $V_3$ as volumes enclosed, respectively, in a tensor mesh $\cM$ and a polygonal ring $\cR$, as introduced in Section \ref{geometricinterpretation}. The two sets are related through the transpose of matrix $\bbol{E}^{(3)}$, as explained in Remark \ref{linearalgebraremark}. $\bbol{E}^{(3)}$ is a block diagonal matrix with $\bbol{E}_J^{(2)}$ as diagonal blocks. On the right-most part of the figure we show the indices of the DOFs corresponding to the volumes in the $k$th side of $\cR$, seen from the point of view of the $k$th joint of $\cR$, for $k \in \{1, \ldots, n^\varphi\}$. These indices correspond to the indices $(i, j, k)$ for fixed $k$ via $\bbol{E}^{(2)}_J$. In this geometric interpretation, the DOFs related to such $(i, j, k)$ indices form a slice of the mesh $\cM$ obtained by cutting along the $(r, \theta)$-plane.}\label{dof3}
\end{figure}
\begin{prop}\label{commutation3}
Let $\{m_{ijk}\}_{i,j,k=1}^{n^r-1,n^\theta,n^\varphi}$ be a general set of DOFs in $\im(\div) \subseteq\SSS^{p^r-1,p^\theta-1,p^\varphi-1}$. Then there exist a set of DOFs $\{h_{ijk}^1\}_{i,j,k=1}^{n^r,n^\theta,n^\varphi},\{h_{ijk}^2\}_{i,j,k=1}^{n^r-1,n^\theta,n^\varphi},\{h_{ijk}^3\}_{i,j,k=1}^{n^r-1,n^\theta,n^\varphi}$ in $\SSS^{p^r,p^\theta-1,p^\varphi-1}\times \SSS^{p^r-1,p^\theta,p^\varphi-1}\times \SSS^{p^r-1,p^\theta-1,p^\varphi}$, a set of DOFs $\{h_\ell\}_{\ell=1}^{n_2}$ in $V_2$ and a collection of numbers $\{m_\ell\}_{\ell=1}^{n_3}$ such that Equation \eqref{m} holds true, for which the following diagram commutes:
$$
\begin{tikzpicture}
\matrix (m)[matrix of math nodes,column sep=14em,row sep=3em]{
\left\{\begin{array}{c}h_{ijk}^1\\\\h_{ijk}^2\\\\h_{ijk}^3\end{array}\right\}_{i,j,k} \pgfmatrixnextcell \{m_{ijk}\}_{i,j,k}\\ \{h_\ell\}_{\ell} \pgfmatrixnextcell \{m_\ell\}_{\ell}\\
};
\draw[-stealth] (m-1-1) --node[above]{$\left[\bbol{D}^{(1, 0, 0)}~\bbol{D}^{(0, 1, 0)}~\bbol{D}^{(0, 0, 1)}\right]$} (m-1-2);
\draw[-stealth] (m-2-1) --node[below]{$\bbol{D}^{(2)}$} (m-2-2);
\draw[-stealth] (m-2-1) --node[left]{$(\bbol{E}^{(2)})^T$} (m-1-1);
\draw[-stealth] (m-2-2) --node[right]{$(\bbol{E}^{(3)})^T$} (m-1-2);
\draw[-stealth,dashed] plot [smooth] coordinates {(-3.35, -1.5) (-2.35, 0.5) (2.77, 0.7)};
\draw[-stealth,dashed] plot [smooth] coordinates {(-3.15, -1.65) (2.5, -1.5) (3.5, 0.55)};
\end{tikzpicture}
$$
\end{prop}
The proof of Proposition \ref{commutation3} follows the same argument of the proof of Proposition \ref{commutation1}. Hence it is postponed to Appendix \ref{app:commutation12}.

Let now $\m{N}^{(3)} \defeq \bbol{E}^{(3)}\m{B}^{(1,1,1)}$. By looking at Equation \eqref{E3}, we note that there are only non-zero functions in $\m{N}^{(3)}$ as $\bbol{E}^{(3)}$ has only non-zero rows. Furthermore, such functions are linearly independent as the functions in $\m{B}^{(1,1,1)}$ are linearly independent and $\bbol{E}^{(3)}$ has full rank (equal to the number of rows). We define the space $V_3$ as the span of the functions in $\m{N}^{(3)}$. Thanks to Remark \ref{linearalgebraremark} and Proposition \ref{commutation3}, we are able to prove the following corollary.
\begin{cor}\label{cor3}
The diagram
$$
\begin{tikzpicture}
\matrix (m)[matrix of math nodes,column sep=3em,row sep=3em]{
\SSS^{p^r,p^s-1,p^t-1}\times \SSS^{p^r-1,p^s,p^t-1}\times \SSS^{p^r-1,p^s-1,p^t}\pgfmatrixnextcell\SSS^{p^r-1,p^s-1,p^t-1}\\ V_2 \pgfmatrixnextcell V_3\\
};
\draw[-stealth] (m-1-1) --node[above]{$\div$} (m-1-2);
\draw[-stealth] (m-2-1) --node[below]{$\div$} (m-2-2);
\draw[-stealth] (m-2-1) --node[left]{$\id$} (m-1-1);
\draw[-stealth] (m-2-2) --node[right]{$\id$} (m-1-2);
\draw[-stealth, dashed] plot [smooth] coordinates {(-1.75, -0.5) (-1, 0.5) (2.65, 0.65)};
\draw[-stealth, dashed] plot [smooth] coordinates {(-1.5, -0.75) (2.75, -0.65) (3.75, 0.5)};
\end{tikzpicture}
$$
commutes and the pushforward of the functions in $V_3$, namely $\cF^3(N_\ell^{(3)})$ for $\ell=1, \ldots, n_3$, with the operator $\cF^3$ defined as in \eqref{pushforward}, are bounded on $\Omega^{pol}$.
\end{cor}
Corollary \ref{cor3} can be proved adapting the arguments used in the proof of Corollary \ref{cor1}.

Therefore, given a function $\pmb{h} \in V_2, \pmb{h}=\m{N}^{(2)}\cdot \m{h}$, then $\div\pmb{h}=\m{N}^{(3)}\cdot \bbol{D}^{(2)}\m{h}$.
\section{Preservation of the cohomology dimensions}\label{preservation}
Let us define the following spline complex on $\Omega$:
\begin{equation}\label{splinecomplex}
\gS: 0 \xrightarrow{\id} V_0 \xrightarrow{\grad} V_1 \xrightarrow{\curl} V_2 \xrightarrow{\div} V_3 \xrightarrow{0} 0.
\end{equation}
We now prove that the cohomological structure of $\gS$ is equal to the cohomological structure of \eqref{continuousderham}. This preservation property will be immediately transferred to the corresponding complex on the physical domain, as we will show after the theorem.
\begin{thm}\label{preservationthm}
The cohomology spaces of the spline complex $\gS$ defined in Equation \eqref{splinecomplex} have the following dimensions:
$$
\dim \cH^0(\gS) = 1, \quad \dim \cH^1(\gS) = 1, \quad \dim \cH^2(\gS) = 0, \quad \dim \cH^3(\gS) = 0.
$$
\end{thm}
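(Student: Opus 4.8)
The plan is to reduce the computation of $\dim\cH^k(\gS)$ to a purely algebraic problem and then exploit the Kronecker structure that pervades the construction. First I would observe that, since the functions spanning $V_0,V_1,V_2,V_3$ are linearly independent (by Proposition \ref{linindep1}, Proposition \ref{linindep2}, and the analogous statements in the text for $V_0$ and $V_3$), the differentials $\grad,\curl,\div$ of $\gS$ act faithfully on coefficient vectors through the matrices $D^{(0)},D^{(1)},D^{(2)}$: for instance $\grad f=\pmb{N}^{(1)}\cdot D^{(0)}\pmb{f}$ vanishes iff $D^{(0)}\pmb{f}=0$, and $\im(\grad)\subseteq V_1$ corresponds to $\im D^{(0)}\subseteq\RR^{n_1}$. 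Hence $\cH^k(\gS)$ is isomorphic to the $k$th cohomology of the finite-dimensional complex $0\to\RR^{n_0}\xrightarrow{D^{(0)}}\RR^{n_1}\xrightarrow{D^{(1)}}\RR^{n_2}\xrightarrow{D^{(2)}}\RR^{n_3}\to 0$, so it suffices to determine the kernels and images of these three matrices.

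The key observation is that this matrix complex factors as the tensor product of the two-dimensional polar complex of \cite{deepesh} in the $(r,s)$ variables with the one-dimensional periodic complex in the toroidal variable $t$. Indeed, every extraction matrix carries the shape $I_{n^t}\otimes(\cdots)$ and every difference matrix in Equation \eqref{D100} is a Kronecker product, with $\gD_{n^t,\periodic}$ the only factor coupling distinct toroidal layers. Writing $\gS^{2D}$ for the disk complex $0\to V_0^{2D}\xrightarrow{\grad}V_1^{2D}\xrightarrow{\curl}V_2^{2D}\to 0$ assembled from $E^{(0)},E^{(1,0)},E^{(0,1)},E^{(2)}$, and $\gS^{1D}$ for the periodic complex $0\to\SSS^{p^t}\xrightarrow{d/dt}\SSS^{p^t-1}_{\periodic}\to 0$, one checks that $\grad,\curl,\div$ split as $d_{2D}\otimes\id\pm\id\otimes d_{1D}$, so that $\gS\cong\gS^{2D}\otimes\gS^{1D}$ as complexes of real vector spaces.

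With this decomposition in hand I would apply the algebraic K\"unneth theorem over $\RR$ (no Tor terms arise), giving $\cH^n(\gS)\cong\bigoplus_{p+q=n}\cH^p(\gS^{2D})\otimes\cH^q(\gS^{1D})$. The factor $\gS^{1D}$ is handled directly: $\ker(d/dt)$ consists of the constants, so $\dim\cH^0(\gS^{1D})=1$, and the dimension count $\dim\SSS^{p^t}=\dim\SSS^{p^t-1}_{\periodic}=n^t$ with $\rank\gD_{n^t,\periodic}=n^t-1$ gives $\dim\cH^1(\gS^{1D})=1$ — exactly the cohomology of $S^1$. For $\gS^{2D}$ I would invoke \cite{deepesh}, where the two-dimensional polar spline complex is shown to reproduce the cohomology of the disk, namely $\dim\cH^0(\gS^{2D})=1$ and $\cH^1(\gS^{2D})=\cH^2(\gS^{2D})=0$. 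Substituting into the K\"unneth formula yields $\dim\cH^0(\gS)=1$, $\dim\cH^1(\gS)=1$ and $\dim\cH^2(\gS)=\dim\cH^3(\gS)=0$, as claimed; as a consistency check, the Euler characteristic $n_0-n_1+n_2-n_3=n^t(\bar{n}_0-(\bar{n}_0+\bar{n}_1)+(\bar{n}_1+\bar{n}_2)-\bar{n}_2)=0$ matches $1-1+0-0$.

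The step I expect to be the main obstacle is justifying the factorization $\gS\cong\gS^{2D}\otimes\gS^{1D}$ rigorously — in particular verifying that the special combinations of the three central degrees of freedom near the polar curve (carried by the block $\bar{E}$ of $E^{(0)}$) are precisely those of the differentials of $\gS^{2D}$ in each toroidal layer, and that the signs appearing in $\curl$ and $\div$ reassemble into the tensor-product differential. Should this bookkeeping prove awkward, an alternative is to bypass K\"unneth and argue topologically: the identifications of Section \ref{geometricinterpretation} (through \cite[Proposition 4.4]{Buffa3}) make $\gS$ isomorphic to the cellular cochain complex $\cA^\bullet(\cR)$ of the control mesh $\cR$, which is a cellular decomposition of a solid torus; cellular cohomology then returns $\dim\cH^k=b_k$ with $b_0=b_1=1$ and $b_2=b_3=0$.
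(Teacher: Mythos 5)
Your proposal is correct in substance but follows a genuinely different route from the paper. The paper proves Theorem \ref{preservationthm} by direct linear algebra on the coefficient complex: it computes $\ker D^{(0)}$ from the algorithm \eqref{gl}, showing the kernel consists exactly of the constant coefficient vectors (so $\dim\cH^0(\gS)=1$); it exhibits an explicit preimage under $D^{(2)}$ of an arbitrary $\pmb{m}$, proving surjectivity of the divergence (so $\dim\cH^3(\gS)=0$); it computes $\rank\, D^{(1)}$ by splitting \eqref{hl} into a per-joint block $D^{(1)}_1$ and a block of the form $[I_{\bar{n}_1}|D_2^{(1)}|-I_{\bar{n}_1}]$, outsourcing the two needed two-dimensional facts (full rank of $D^{(1)}_1$, one-dimensional kernel of $D_2^{(1)}$) to the proofs of Theorems 5.1 and 5.10 of \cite{deepesh}, which gives $\dim\cH^2(\gS)=0$; and it recovers $\dim\cH^1(\gS)=1$ from the Euler characteristic, exactly your consistency check. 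Your K\"unneth factorization $\gS\cong\gS^{2D}\otimes\gS^{1D}$ packages the same 2D input from \cite{deepesh} more conceptually, and the block structure you would need to verify is indeed visibly present in the paper's formulas: every extraction matrix in \eqref{N0}, \eqref{gradientmatrices}, \eqref{E011}, \eqref{E111} has the form $I_{n^t}\otimes(\cdots)$, the within-joint entries of \eqref{gl} and \eqref{hl} are the 2D polar differentials applied layer by layer, the joint-to-joint entries are $\gD_{n^t,\periodic}\otimes I$ differences, and the minus sign in front of $E^{(1,0)}$ in \eqref{E011} supplies precisely the Koszul sign $(-1)^{|a|}$ your tensor differential requires. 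In effect, the paper's blockwise rank computation of $D^{(1)}$ is a hands-on, unlabelled version of the K\"unneth bookkeeping in degrees $1$ and $2$; what your formulation buys is transparency and reusability (the argument would apply verbatim to any tensor factor with known cohomology), at the cost of carrying out the sign-and-ordering verification you yourself flag as the main obstacle. One caution on your fallback: the paper invokes \cite[Proposition 4.4]{Buffa3} only to motivate the geometric interpretation of the DOFs, and applying it rigorously to the singular polar geometry map $G$ of \eqref{geometrymap} requires the pushforward operators for the reduced spaces to be well defined --- which is close to the very content being established --- so the cellular-cohomology shortcut does not genuinely bypass the matrix-level work; either your K\"unneth verification or the paper's direct computation still has to be done.
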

\begin{proof}
Let us start by computing $\dim \cH^0(\gS)$. We know that $\cH^0(\gS) = \ker(\grad)$. Let $f \in V_0$ be such that $\grad f = 0$. Since $\grad f = \sum_{\ell=1}^{n_1} g_\ell N_\ell^{(1)}$, this means that the coefficients $g_\ell=0$ for all $\ell$. With reference to Equation \eqref{eq:gl}, we have 
$$
\left\{
\begin{array}{l}
g_{1 +(k-1)(\bar{n}_0 + \bar{n}_1)}= f_{2+(k-1)\bar{n}_0} - f_{1+(k-1)\bar{n}_0} = 0\\\\
g_{2 +(k-1)(\bar{n}_0 + \bar{n}_1)}= f_{3+(k-1)\bar{n}_0} - f_{1+(k-1)\bar{n}_0} = 0
\end{array}
\right. \R  f_{1+(k-1)\bar{n}_0} = f_{2+(k-1)\bar{n}_0} = f_{3+(k-1)\bar{n}_0}=\alpha_k
$$
for some $\alpha_k \in \RR$. Then, for any $j \in \{1, \ldots, n^\theta\}$ we have
$$
g_{2+j + (k-1)(\bar{n}_0 + \bar{n}_1)} = f_{3+j+(k-1)\bar{n}_0} - \sum_{\ell=1}^3 \bar{E}_{\ell, n^\theta + j}f_{\ell+(k-1)\bar{n}_0} = 0\quad \R \quad f_{3+j+(k-1)\bar{n}_0} = \alpha_k\sum_{\ell=1}^3 \bar{E}_{\ell,n^\theta + j} \stackrel{\text{DTA}}{=} \alpha_k,
$$
where we have used that the matrix $\bbol{E}^{(0)}$ of Equation \eqref{E0} is DTA-compatible, so that in particular its columns sum to one. Next, if we impose
$$
g_{2 + (n^r-2)n^\theta + j + (k-1)\bar{n}_1} = f_{3+(j+1)+(i-3)n^\theta+(k-1)\bar{n}_0} - f_{3+j+(i-3)n^r+(k-1)\bar{n}_0} = 0
$$
for any $i \in \{3, \ldots, n^r\}$ and $j \in \{1, \ldots, n^\theta\}$ and 
$$
g_{2+j + (i-2)n^\theta+(k-1)\bar{n}_1} = f_{3+j+(i-2)n^\theta+(k-1)\bar{n}_0} - f_{3+j+(i-3)n^\theta+(k-1)\bar{n}_0} =0
$$
for any $i \in \{3, \ldots, n^r-1\}$ and $j \in \{1, \ldots, n^\theta\}$, we get
$$
\left\{
\begin{array}{l}
f_{3+(j+1)+(i-3)n^\theta+(k-1)\bar{n}_0} = f_{3+j+(i-3)n^\theta+(k-1)\bar{n}_0}\\\\
f_{3+j+(i-2)n^\theta+(k-1)\bar{n}_0} = f_{3+j+(i-3)n^\theta+(k-1)\bar{n}_0}.
\end{array}
\right.
$$ Hence, we have that all the coefficients of the form $f_{\ell+(k-1)\bar{n}_0}$ for a fixed $k$ and all $\ell \in \{1, \ldots, \bar{n}_0\}$ are equal to $\alpha_k$. However, we further have 
$$
g_{n^\varphi\bar{n}_1 + \ell + (k-1)\bar{n}_0} = f_{\ell+k\bar{n}_0} - f_{\ell+(k-1)\bar{n}_0} = 0 \quad \R \quad  f_{\ell+k\bar{n}_0} = f_{\ell+(k-1)\bar{n}_0},
$$
for $\ell \in \{1, \ldots, \bar{n}_0\}$, that is, $\alpha_{k+1} = \alpha_k = \alpha \in \RR$. We conclude that $1=\dim \ker(\grad)= \dim \cH^0(\gS)$. Let us now move to the computation of $\dim \cH^3(\gS)$. We show that $\div:V_2 \to V_3$ is surjective, so that $\im(\div) = V_3$ and $\dim \cH^3(\gS) = \dim V_3 - \dim \im(\div) = 0$. Therefore, given $m \in V_3$, $m = \m{N}^{(3)}\cdot \m{m}$, we look for a $\pmb{h} \in V_2$, $\pmb{h} = \m{N}^{(2)} \cdot \m{h}$, such that $\div\pmb{h} = m$, that is, such that $\bbol{D}^{(2)}\m{h} = \m{m}$. By choosing,
\begin{center}
\begin{algorithm}[H]
\For{$k=1, \ldots, n^\varphi$}{
$h_{1+(k-1)\bar{n}_1} = 0$\;
$h_{2+(k-1)\bar{n}_1} = 0$\;
\For{$i=1, \ldots, n^r-2$}{
\For{$j=1, \ldots, n^\theta$}{
$h_{2+j+(i-1)n^\theta+(k-1)\bar{n}_1} = 0$\;
$h_{2+(n^r-2)n^\theta + j +(i-1)n^\theta+(k-1)\bar{n}_1} = \displaystyle\sum_{s = 1}^i m_{j + (s-1)n^\theta + (k-1)\bar{n}_2}$\;
}}
\For{$\ell=1, \ldots, \bar{n}_2$}{
$h_{n^\varphi\bar{n}_1 + \ell+(k-1)\bar{n}_2} = \beta \in \RR$\;
}}
\end{algorithm}
\end{center}
one easily can check, looking at the action of $\bbol{D}^{(2)}$ reported in Equation \eqref{eq:divhpol}, that $\bbol{D}^{(2)}\m{h} = \m{m}$. Thus $\dim \cH^3(\gS) = 0$. We move to $\dim \cH^2(\gS) = \dim \ker(\div) - \dim \im(\curl)$.  Let us start by calculating $\dim \ker(\div)$. We have shown that the divergence is surjective, that is, $\dim \im(\div) = \dim V_3 = n_3= n^\varphi\bar{n}_2$. Furthermore, it holds $\dim \ker(\div) + \dim\im(\div) = \dim V_2 = n_2 = n^\varphi(\bar{n}_1+\bar{n}_2)$. Therefore, 
$$
\dim \ker(\div) = n^\varphi(\bar{n}_1 + \bar{n}_2) - n^\varphi\bar{n}_2 = n^\varphi\bar{n}_1.
$$
For $\dim \im(\curl)$, we claim that it is equal to 
\begin{equation}\label{eq:dimimcurl}
\dim \im(\curl) = n^\varphi(\bar{n}_2 + \bar{n}_0 -1).
\end{equation} 
We shall prove this fact later, at the end of the proof, for the sake of readability. Therefore, assuming that \eqref{eq:dimimcurl} holds, we have
\begin{equation*}
\begin{split}
\dim \cH^2(\gS) &= \dim \ker(\div) - \dim \im(\curl)\\
&= n^\varphi(\bar{n}_1- \bar{n}_2-\bar{n}_0+1)\\
&= n^\varphi(2n^\theta(n^r-2)+2-n^\theta(n^r-2) - n^\theta(n^r-2)-3+1)=0.
\end{split}
\end{equation*}
With the dimensions of $\cH^0(\gS),\cH^2(\gS)$ and $\cH^3(\gS)$ at hand, we can easily compute $\dim \cH^1(\gS)$ by using the following identity:
\begin{equation}\label{eq:ranknullity}
\dim \cH^0(\gS) - \dim \cH^1(\gS) + \dim \cH^2(\gS) - \dim \cH^3(\gS) = \dim V_0 - \dim V_1 + \dim V_2 - \dim V_3.
\end{equation}
Equation \eqref{eq:ranknullity} holds because the spaces $V_\ell$ for $\ell = 0, \ldots, 3$ are finite-dimensional and the differential operators act as linear transformations between them. Therefore, we have $\dim V_0 = \dim \im(\grad) + \dim \ker(\grad)$, $\dim V_1 = \dim \im(\curl) + \dim \ker(\curl)$, and $\dim V_2 = \dim \im(\div) + \dim \ker(\div) = \dim V_3 + \dim \ker(\div)$. The finite dimensionality of the spaces also implies that the dimensions of the quotient spaces $\cH^\ell$ for $\ell = 1, 2, 3$ are determined by the following differences: $\dim \cH^1 = \dim \ker(\curl) - \dim \im(\grad)$, $\dim \cH^2 = \dim \ker(\div) - \dim \im(\curl)$, and $\dim \cH^3 = \dim V_3 - \dim \im(\div)$.

On one hand we have 
$$
\dim V_0 - \dim V_1 + \dim V_2 - \dim V_3 = n^\varphi(\bar{n}_0-\bar{n}_0-\bar{n}_1+\bar{n}_1+\bar{n}_2-\bar{n}_2)=0,
$$
and on the other hand we have proved that
$$
\dim \cH^0(\gS) =1, \quad \dim \cH^2(\gS) = 0, \quad \dim \cH^3(\gS) = 0,
$$
so that we must have $$
\dim \cH^1(\gS) = 1.
$$
This completes the proof. It remains to show Equation \eqref{eq:dimimcurl}. This is equivalent to finding the rank of matrix $\bbol{D}^{(1)}$, $\rk(\bbol{D}^{(1)})$, reported in Equation \eqref{eq:D1}. We prove that there exists a Gaussian elimination that trasforms $\bbol{D}^{(1)}$ in 
\begin{equation}\label{eq:D1gauss}
\begin{bmatrix}
\raisebox{-0.05cm}{\scalebox{2}{$\circ$}} & \bbol{I}_{n^\varphi} \otimes \bbol{D}_J^{(0)}\\
\bbol{I}_{n^\varphi} \otimes \bbol{D}_J^{(1)} & \raisebox{-0.05cm}{\scalebox{2}{$\circ$}}  
\end{bmatrix}
\end{equation}
and therefore the rank of $\bbol{D}^{(1)}$ shall be 
$$\rk(\bbol{D}^{(1)}) = \rk(\bbol{I}_{n^\varphi} \otimes \bbol{D}_J^{(0)}) +  \rk(\bbol{I}_{n^\varphi} \otimes \bbol{D}_J^{(1)})= n^\varphi(\rk(\bbol{D}_J^{(0)}) + \rk(\bbol{D}_J^{(1)})).$$
The ranks of $\bbol{D}_J^{(0)}$ and $\bbol{D}_J^{(1)}$ can be found in \cite[proof of Theorem 5.10, first and second cohomologies]{deepesh} and are equal to $\bar{n}_0 - 1$ and $\bar{n}_2$. Therefore, in particular, there exist Gaussian eliminations that trasform the $\ell$th diagonal block of $\bbol{I}_{n^\varphi} \otimes \bbol{D}_J^{(0)}$, that is, the  $\bar{n}_1 \times \bar{n}_0$ matrix $\bbol{D}_J^{(0)}$, into
$$
\bbol{U}^{\ell, k} \defeq
\raisebox{-2cm}{
\begin{tikzpicture}[scale = 1.75]
\draw (0, -1) rectangle (1.5, 1);
\draw[|-|] (0, 1.1) --node[above]{{\scriptsize$\bar{n}_0 - 1$}} (1, 1.1);
\draw[-|] (1, 1.1) --node[above]{{\scriptsize $1$}} (1.5, 1.1);
\draw[|-|] (1.6, 1) -- node[right]{{\scriptsize $k$}} (1.6, 0.5);
\draw[-|] (1.6, 0.5) --node[right]{{\scriptsize \rotatebox{-90}{$\bar{n}_0 - 1$}}} (1.6, -0.5);
\draw[-|] (1.6, 0) -- (1.6, -1);
\draw (1.6, -0.75) node[right]{{\scriptsize $\bar{n}_1 - \bar{n}_0 + 1 - k$}};
\draw (1, -1) -- (1, 1);
\draw (0, -0.5) -- (1, -0.5);
\draw (0, 0.5) -- (1, 0.5);
\node at (1.25, 0) {$\begin{array}{c} 0\\\vdots\\\vdots\\\vdots\\0 \end{array}$};
\node at (0.5, 0) {$\bbol{I}_{\bar{n}_0 - 1}$};
\node at (0.5, -0.75) {\raisebox{-0.1cm}{\scalebox{2}{$\circ$}}};
\node at (0.5, 0.75) {\raisebox{-0.1cm}{\scalebox{2}{$\circ$}}};
\end{tikzpicture}}\hspace{-1cm}\forall\, k = 0, \ldots, \bar{n}_1 - \bar{n}_0 + 1.
$$
Let then split $\per{\bbol{D}}_{n^\varphi} \otimes -\bbol{I}_{\bar{n}_1}$ by columns into $n^\varphi$ consecutive matrices $\bbol{A}^\ell$ of size $n^\varphi\bar{n}_1 \times \bar{n}_1$:
$$\per{\bbol{D}}_{n^\varphi} \otimes -\bbol{I}_{\bar{n}_1} = \raisebox{-1.75cm}{
\begin{tikzpicture}[scale = 3.5]
\fill[gray!20!white] (0, 0) rectangle (0.2, 1);
\fill[gray!40!white] (0.2, 0) rectangle (0.4, 1);
\fill[gray!60!white] (0.4, 0) rectangle (0.6, 1);
\fill[gray!80!white] (0.8, 0) rectangle (1, 1);
\draw (0, 0) rectangle (1, 1);
\draw (0, 0.8) -- (0.6, 0.8);
\draw (0.2, 0.6) -- (0.6, 0.6);
\draw (0.2, 0.6) -- (0.2, 1);
\draw (0.4, 0.4) -- (0.4, 1);
\draw (0.4, 0.4) -- (0.6, 0.4);
\draw (0.6, 0.4) -- (0.6, 0.8);
\draw (0.8, 0) -- (0.8, 0.4) -- (1, 0.4);
\draw (0.8, 0.2) -- (1, 0.2);
\draw (0, 0) rectangle (0.2, 0.2);
\draw[dotted] (0.61, 0.39) -- (0.79, 0.21);
\draw[dotted] (0.61, 0.59) -- (0.79, 0.41);
\node at (0.1, 0.9) {$\bbol{I}_{\bar{n}_1}$};
\node at (0.3, 0.9) {$-\bbol{I}_{\bar{n}_1}$};
\node at (0.3, 0.7) {$\bbol{I}_{\bar{n}_1}$};
\node at (0.5, 0.7) {$-\bbol{I}_{\bar{n}_1}$};
\node at (0.5, 0.5) {$\bbol{I}_{\bar{n}_1}$};
\node at (0.9, 0.3) {$-\bbol{I}_{\bar{n}_1}$};
\node at (0.9, 0.1) {$\bbol{I}_{\bar{n}_1}$};
\node at (0.1, 0.1) {$-\bbol{I}_{\bar{n}_1}$};
\draw[-stealth] (0.1, 1.15) node[above, xshift=0.05]{$\bbol{A}^1$} -- (0.1, 1.05);
\draw[-stealth] (0.3, 1.15) node[above, xshift=0.05]{$\bbol{A}^2$} -- (0.3, 1.05);
\draw[-stealth] (0.5, 1.15) node[above, xshift=0.05]{$\bbol{A}^3$} -- (0.5, 1.05);
\draw[-stealth] (0.9, 1.15) node[above, xshift=0.05]{$\bbol{A}^{n^\varphi}$} -- (0.9, 1.05);
\end{tikzpicture}}.
$$

$\bbol{A}^\ell$ is basically a vertical band in $\per{\bbol{D}}_{n^\varphi} \otimes -\bbol{I}_{\bar{n}_1}$. Furthermore, each $\bbol{A}^\ell$ has only two non-zero blocks of size $\bar{n}_1 \times \bar{n}_1$, equal to $\pm\bbol{I}_{\bar{n}_1}$ in position $\ell -1, \ell$ vertically, respectively. Given the $j$th column of $\bbol{A}^\ell$, $\m{a}_j^\ell$, then it holds
\begin{equation}\label{eq:columnaj}
\left\{\begin{array}{ll}
\m{a}_j^\ell = \m{u}_j^{\ell, 0} - \m{u}_j^{\ell - 1, 0} & \text{if } j \leq \bar{n}_0 - 1 \\\\
\m{a}_j^\ell = \m{u}_{\bar{n}_0-1}^{\ell, j - \bar{n}_0 + 1} - \m{u}_{\bar{n}_0-1}^{\ell - 1, j - \bar{n}_0 + 1} & \text{if } j \geq \bar{n}_0
\end{array}\right.
\end{equation}
where $\m{u}_j^{\ell, 0}$ and $\m{u}_{\bar{n}_0-1}^{\ell, j - \bar{n}_0 + 1}$ are the $j$th and $(\bar{n}_0-1)$th columns of the Gaussian trasformations of $\bbol{I}_{n^\varphi} \otimes \bbol{D}_J^{(0)}$ with $\ell$th block equal to $\bbol{U}^{\ell, 0}$ and $\bbol{U}^{\ell, j - \bar{n}_0+1}$ respectively. As summing columns of the right side of $\bbol{D}^{(1)}$ to the left side of it has no effect to the lower block $\bbol{I}_{n^\varphi} \otimes \bbol{D}_J^{(1)}$, we get the matrix in Equation \eqref{eq:D1gauss} from \eqref{eq:columnaj}. Therefore, $\dim\im(\curl) = \rk(\bbol{D}^{(1)}) = n^\varphi(\bar{n}_2 + \bar{n}_0 -1)$.
\end{proof}
Since the pushforward operators defined in \eqref{pushforward} commute with the differential operators $\grad,\curl$ and $\div$, see e.g. \cite[Theorem 19.8]{tu}, the spline complex on $\Omega^{pol}$  
$$
\gS^{pol}:0 \xrightarrow{\id} V_0^{pol} \xrightarrow{\grad} V_1^{pol} \xrightarrow{\curl} V_2^{pol} \xrightarrow{\div} V_3^{pol} \xrightarrow{0} 0,
$$
where $V_i^{pol}$ are the spaces spanned by the functions $N_\ell^{(i),pol} := \cF^i(N_\ell^{(i)})$ for $i=1,2,3$, preserves the cohomological structure of the de Rham complex \eqref{continuousderham}, because of Theorem \ref{preservationthm}.

\section{Numerical experiments}\label{numerical}
We now present some numerical experiments to check the approximation power of the polar spline spaces constructed. In particular, we perform $L^2$ projections of smooth scalar and vector fields onto the polar spline spaces. The approximations indeed converge at the expected rates, even in the presence of the singularity introduced by the underlying toroidal geometry. This supports our claim that the singularity of the parametric map is properly managed by our construction and that the spaces are suitable for the intended discretization of differential forms in toroidal solids.

We consider solid toroidal domains $\Omega^{pol}$ (see Figure \ref{fig:geometries_l2projection}) that are defined at the coarsest level by uniformly dividing the parametric domain $\Omega = [0,1] \times [0, 2\pi] \times [0, 2\pi]$ into $2, 5$ and $5$ elements in the $r, \theta$ and $\varphi$ directions, respectively.
On this mesh of $\Omega$, we build the tensor-product spline space $\SSS^{p, p, p}$, $p \in \{2, 3\}$, using maximally smooth splines, i.e., $C^{p-1}$ smooth splines, in each parametric direction; recall that the splines in $\theta$ and $\varphi$ directions are also periodic.
This space of splines is used for building $V_0^{pol}$ and is also used to describe the geometries; the remaining polar spline spaces $V_k^{pol}$, $k = 1, 2, 3$, are built as described in Section \ref{ssec:discretizationoverview}.

Note that the above setup means that, at the coarsest level, the tensor-product space $\SSS^{p, p, p}$ is defined by $n^r = p+2, n^\theta = 5, n^\varphi = 5$ basis functions for $p \in \{2, 3\}$.
When refining the spaces for performing the convergence study, we refine the parametric domain uniformly by bisection and maintain maximal smoothness in each parametric direction.

\begin{figure}
  \centering
  \subfloat[$p = 2$]{
    \includegraphics[width=0.49\textwidth]{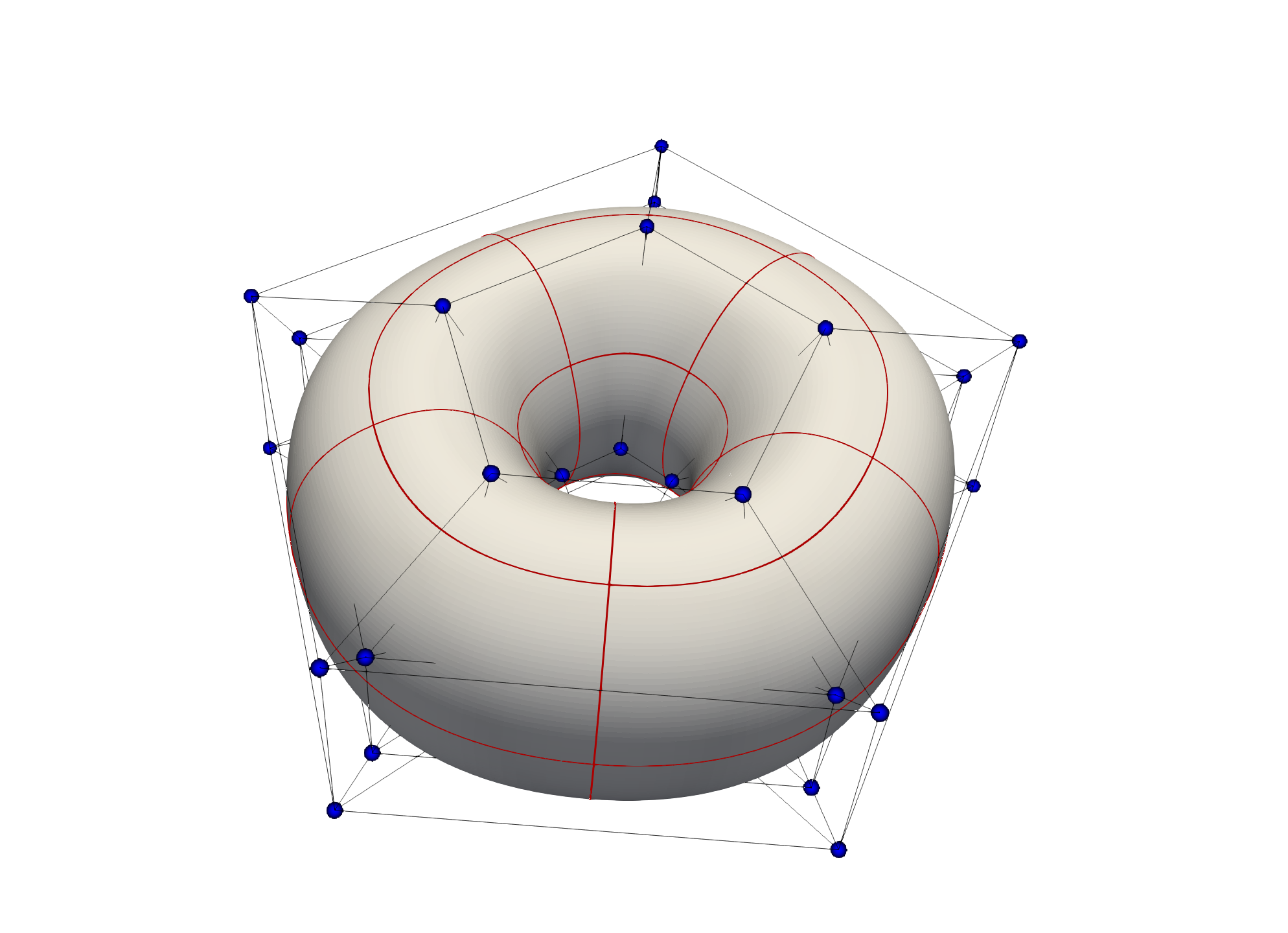}
  }
  \subfloat[$p = 3$]{
    \includegraphics[width=0.49\textwidth]{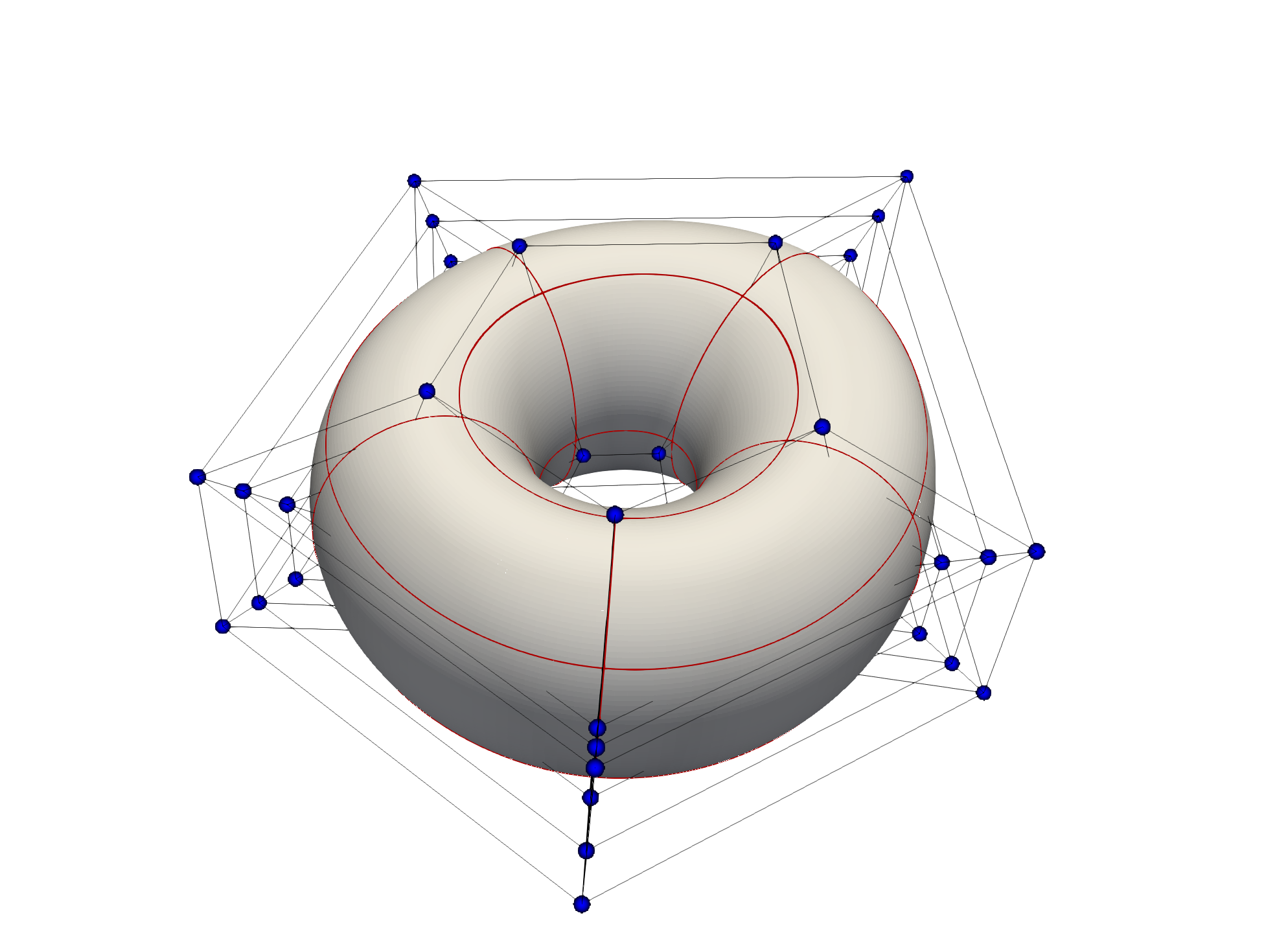}
  }
  \caption{The coarsest geometries used in the $L^2$ projection tests for degrees 2 and 3 are shown above. The mesh element boundaries are displayed in red, and the control points used for building the geometries are displayed as blue spheres.}\label{fig:geometries_l2projection}
\end{figure}

For the $L^2$ convergence study, we solve the following problems:
\begin{itemize}
  \item Given $f \in L^2(\Omega^{pol})$, find $f_h \in V^{pol}_k$, $k \in \{0, 3\}$, such that
  \begin{equation}
    \int_{\Omega^{pol}} (w_h - f_h)f \,d\Omega^{pol} = 0\;,\qquad w_h \in V^{pol}_k\;.
  \end{equation}
  \item Given $\pmb{f} \in [L^2(\Omega^{pol})]^3$, find $\pmb{f}_h \in V^{pol}_k$, $k \in \{1, 2\}$, such that
  \begin{equation}
    \int_{\Omega^{pol}} (\pmb{w}_h - \pmb{f}_h)\cdot \pmb{f} \,d\Omega^{pol} = 0\;,\qquad \pmb{w}_h \in V^{pol}_k\;.
  \end{equation}
\end{itemize}
Specifically, we consider the following smooth functions:
\begin{align}
f(x,y,z) &:= \sin(x/10)\sin(y/10)\sin(z/10)\;,\label{eq:l2-projection-scalar-target}\\
\pmb{f}(x,y,z) &:= (f(x,y,z), f(x,y,z), f(x,y,z))\;.\label{eq:l2-projection-vector-target}
\end{align}
The error convergence plots are shown in Figure \ref{fig:convergence}, where the $L^2$-norm of error is plotted against the number of degrees of freedom (denoted $\text{NDOF}_k$ for the space $V^{pol}_k$).
The plots show that the error converges with the expected rate, that is, the error decreases as $\text{NDOF}_0^{-\frac{p+1}{3}}$ for $k = 0$ and as $\text{NDOF}_k^{-\frac{p}{3}}$ for $k = 1, 2, 3$.

\begin{figure}
  \centering
  \subfloat[]{\includegraphics[page = 1]{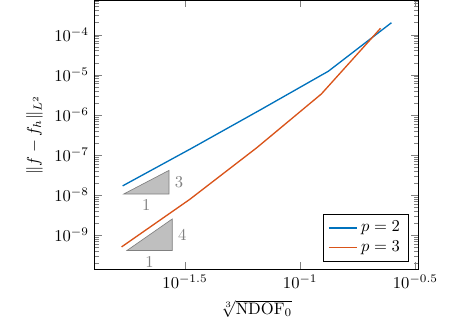}}
 \subfloat[]{ \includegraphics[page = 2]{convPlot}}\\
 \subfloat[]{\includegraphics[page = 3]{convPlot}}
 \subfloat[]{\includegraphics[page = 4]{convPlot}}
  \caption{$L^2$-norm of the projection error of the smooth functions in Equations \eqref{eq:l2-projection-scalar-target}--\eqref{eq:l2-projection-vector-target}, defined in the solids of Figure \ref{fig:geometries_l2projection}, onto the polar spaces. 
  The spaces are refined uniformly by bisection; see Section \ref{numerical} for details.
  We perform 4 such refinement steps for all spaces, except for the combination of $p = 3$ and $k = 1,2$ where we stop at 3 steps because of computational constraints.
  In (a) the error decays for the projection on the space $V_0^{pol}$ as $\text{NDOF}_0^{-(p+1)/3}$. In (b)--(d) the error decays for the projections on the spaces $V_{\ell}^{pol}$ for $\ell = 1, 2, 3$ as $\text{NDOF}_k^{-p/3}$.  The rates of convergence are the same that one would expect for non-singular geometric mappings, showing that the hurdle of the polar singularity is overcome.}\label{fig:convergence}
\end{figure}

\section{Conclusions}\label{conclusion}
We have presented a discretization of the de Rham complex \eqref{continuousderham} on a 3D solid toroidal domain $\Omega^{pol}$ by means of pushed-forward restricted spline spaces. Such discretization forms a spline complex preserving the cohomological structure of the continuous complex. 

Several extension of this work are possible. Our future research will be concentrated in developing de Rham complex discretizations using locally refinable B-spline spaces, such as LR B-spline spaces or THB-spline spaces, and establishing extraction processes of suitable reduced spaces to gain local adaptivity in polar domains. These aspects would then be combined to extend the discretization defined in this paper to spline spaces in which local adaptivity is allowed. 

\section*{Acknowledgments}
Francesco Patrizi is supported by the National Recovery and Resilience Plan, Mission 4 Component 2 – Investment 1.4 – CN\_00000013 ``CENTRO NAZIONALE HPC, BIG DATA E QUANTUM COMPUTING'', spoke 6 (CUP B83C22002830001) and he is member of Gruppo Nazionale per il Calcolo Scientifico, Istituto Nazionale di Alta Matematica.
Deepesh Toshniwal is supported by project number 212.150
awarded through the Veni research programme by the Dutch Research Council (NWO).
\begin{appendices}
\section{Proofs of commutations}\label{app:commutation12}
Given an element $\pmb{g} \in V^1$ with associated set of DOFs $\m{g}$, the application of matrix $\bbol{D}^{(1)}$ to $\m{g}$, defined in Equation \eqref{eq:D1}, produces a vector $\m{h} = \bbol{D}^{(1)}\m{g}$ with the entries $\{h_\ell\}_{\ell=1}^{n_2}$ given by
\begin{equation}\label{hl}
\hspace{-1cm}\begin{minipage}{\textwidth}
\begin{algorithm}[H]
\For{$k = 1, \ldots, n^\varphi$}{
$h_{1 + (k-1)\bar{n}_1} = g_{1 + (k-1)\bar{n}_1} - g_{1 + k\bar{n}_1} + g_{n^\varphi \bar{n}_1 + 2 + (k-1) \bar{n}_0} - g_{n^\varphi \bar{n}_1 + 1 + (k-1)\bar{n}_0}$\;
$h_{2 + (k-1)\bar{n}_1} = g_{2 + (k-1)\bar{n}_1} - g_{2 + k\bar{n}_1} + g_{n^\varphi \bar{n}_1 + 3 + (k-1) \bar{n}_0} - g_{n^\varphi \bar{n}_1 + 1 + (k-1)\bar{n}_0}$\;
\For{$j = 1, \ldots, n^\theta$}{
$\displaystyle h_{2 + j + (k-1)\bar{n}_1} = g_{2+j+(k-1)\bar{n}_1} - g_{2+j+k\bar{n}_1} +$\\$\displaystyle\qquad\qquad\qquad\qquad\qquad g_{n^\varphi \bar{n}_1 + 3+ j + (k-1)\bar{n}_0} - \sum_{\ell=1}^3 \bar{E}_{\ell, n^\theta + j} g_{n^\varphi\bar{n}_1 + \ell + (k-1)\bar{n}_0}$\;
$h_{2 + (n^r-2)n^\theta + j + (k-1)\bar{n}_1} = g_{2+(n^r-2)n^\theta + j  + (k-1)\bar{n}_1} - g_{2+(n^r-2)n^\theta + j + k\bar{n}_1} +$\\$\qquad\qquad\qquad\qquad\qquad\qquad\qquad g_{n^\varphi \bar{n}_1 + 3 + (j + 1)  + (k-1)\bar{n}_0} - g_{n^\varphi \bar{n}_1 + 3 + j + (k-1)\bar{n}_0}$\;
$h_{n^\varphi\bar{n}_1 + j + (k-1)\bar{n}_2} = g_{2 + j + (k-1)\bar{n}_1} - g_{2 + (j+1) + (k-1)\bar{n}_1} +$\\ $\qquad\qquad\qquad\qquad\qquad\qquad\qquad\qquad \displaystyle g_{2 +(n^r-2)n^\theta + j + (k-1)\bar{n}_1} - \sum_{\ell=1}^2 \bar{E}_{\ell, n^\theta + j}^{(1), R} g_{\ell + (k-1)\bar{n}_1}$\;
\For{$i = 1, \ldots, n^r-3$}{
$h_{2 + j + in^\theta + (k-1)\bar{n}_1} = g_{2+j+in^\theta + (k-1)\bar{n}_1} - g_{2+j+in^\theta + k\bar{n}_1}+$\\$\qquad\qquad\qquad\qquad\qquad\qquad g_{n^\varphi\bar{n}_1 + 3 + j + (i-1)n^\theta + (k-1)\bar{n}_0} - g_{n^\varphi \bar{n}_1 + 3+ j + in^\theta + (k-1)\bar{n}_0}$\; 
$h_{2 + (n^r-2)n^\theta + j + in^\theta + (k-1)\bar{n}_1} = g_{2+(n^r-2)n^\theta + j + in^\theta + (k-1)\bar{n}_1} - g_{2+(n^r-2)n^\theta + j + in^\theta + k\bar{n}_1} +$\\$\qquad\qquad\qquad\qquad\qquad\qquad\qquad\qquad g_{n^\varphi \bar{n}_1 + 3 + (j + 1) + in^\theta + (k-1)\bar{n}_0} - g_{n^\varphi \bar{n}_1 + 3 + j + in^\theta + (k-1)\bar{n}_0}$\;
$h_{n^\varphi\bar{n}_1 + j + in^\theta+(k-1)\bar{n}_2} = g_{2 + j +in^\theta+ (k-1)\bar{n}_1} - g_{2 + (j+1) + in^\theta + (k-1)\bar{n}_1} +$\\ $\qquad\qquad\qquad\qquad\qquad\qquad g_{2 + (n^r-2)n^\theta + j + in^\theta+ (k-1)\bar{n}_1} - g_{2 + (n^r-2)n^\theta + j +(i-1)n^\theta + (k-1)\bar{n}_1}$\;
}}}
\end{algorithm}

\end{minipage}
\end{equation}
Each entry of $\m{h}$ is a linear combination of four DOFs of $V_1$ associated to edges of $\cR$ framing a face. The only exceptions are those entries corresponding to faces with an edge on the cylinders at the center of the sides of $\cR$, for which we need a suitable linear combination of the edges inside such cylinders. 

Matrix $\bbol{E}^{(2)}$ defined in Equation \eqref{eq:E2} provides the representation of $\pmb{h} = \m{N}^{(2)} \cdot \m{h} \in V^2$ in terms of the tensor product basis of $\SSS^{p^r,p^\theta-1,p^\varphi-1}\times\SSS^{p^r-1,p^\theta,p^\varphi-1}\times\SSS^{p^r-1,p^\theta-1,p^\varphi}$. More precisely, a general element of the latter space is expressed using three sets of coefficients $\{h^1_{ijk}\}_{i,j,k=1}^{n^r, n^\theta, n^\varphi}, \{h^2_{ijk}\}_{i,j,k=1}^{n^r-1, n^\theta, n^\varphi}, \{h^2_{ijk}\}_{i,j,k=1}^{n^r, n^\theta, n^\varphi}$ for the three components and matrix $\bbol{E}^{(2)}$ describes how to choose them as follows
\begin{equation}\label{eq:hijk}
\hspace{-1cm}\begin{minipage}{\textwidth}
\begin{algorithm}[H]
\For{$k = 1, \ldots, n^\varphi$}{
\For{$j = 1, \ldots, n^\theta$}{
$h_{1jk}^1 = \displaystyle\sum_{\ell=1}^2 \bar{E}_{\ell, j}^{(1), R} h_{\ell + (k - 1)\bar{n}_1} = 0; \quad h_{2jk}^1 = \displaystyle\sum_{\ell = 1}^2 \bar{E}_{\ell, n^\theta + j}^{(1), R}h_{\ell + (k-1)\bar{n}_1}$\;
$h_{1jk}^2 = \displaystyle-\sum_{\ell=1}^2 \bar{E}_{\ell, j}^{(1), L} h_{\ell + (k-1)\bar{n}_1}$\;
$h_{1jk}^3 = 0$\;
\For{$i = 3, \ldots, n^r$}{
$h_{ijk}^1 = h_{2 + (n^r - 2)n^\theta + j + (i-3)n^\theta + (k-1)\bar{n}_1}$\;
$h_{(i-1)jk}^2 = -h_{2 + j + (i-3)n^\theta + (k-1)\bar{n}_1}$\;
$h_{(i-1)jk}^3 = h_{n^\varphi \bar{n}_1 + j + (i-3)n^\theta + (k - 1)\bar{n}_2}$\;
}}}
\end{algorithm}
\end{minipage}
\end{equation}
We can now prove Proposition \ref{commutation2}.
\begin{proof}
We split the cases $i = 1, 2$ and $i \geq 3$.  We shall use Equations \eqref{eq:tensorcurl}, \eqref{eq:gijk}, \eqref{hl} and \eqref{eq:hijk}, which provide the explicit relations for each DOF, and sometimes the DTA-compatible property of matrix $\bar{\bbol{E}}$, defined in Equation \eqref{eq:Ebar}. For $i = 1$ we have
\begin{equation*}
\begin{array}{l}
\begin{split}
\bullet~h_{1jk}^1 \stackrel{[ 0~-\bbol{D}^{(0,0,1)}~\bbol{D}^{(0,1,0)}]}{=}& g_{1(j+1)k}^3 - g_{1jk}^3 + g_{1jk}^2 - g_{1j(k+1)}^2 \stackrel{(\bbol{E}^{(1)})^T}{=} \sum_{\ell=1}^3 (\bar{E}_{\ell,(j+1)} - \bar{E}_{\ell,j})g_{n^\varphi\bar{n}_1 + \ell + (k-1)\bar{n}_0} + 0\\ &=0 \stackrel{\bbol{D}^{(1)}}{=} 0 \stackrel{(\bbol{E}^{(2)})^T}{=} h_{1jk}^1
\end{split}\\
\begin{split}
\bullet~h_{1jk}^2 \stackrel{[\bar{\bbol{D}}^{(0,0,1)}~0~-\bbol{D}^{(1, 0, 0)}]}{=} &g_{1j(k+1)}^1 - g_{1jk}^1 + g_{1jk}^3 - g_{2jk}^3 \stackrel{(\bbol{E}^{(1)})^T}{=} \sum_{\ell=1}^2 \bar{\bbol{E}}_{\ell, j}^{(1), L} (g_{\ell + k\bar{n}_1} - g_{\ell + (k-1)\bar{n}_1}) +\\
&- \sum_{\ell=1}^2 \bar{E}_{\ell, j}^{(1), L} g_{n^\varphi \bar{n}_1 + \ell + 1 + (k-1)\bar{n}_0} - (\bar{E}_{1, n^\theta + j} - \bar{E}_{1, j}) g_{n^\varphi\bar{n}_1 + 1 + (k-1)\bar{n}_0} +\\&+ \sum_{\ell = 1}^2 \bar{E}_{\ell, j}^{(1), L} g_{n^\varphi\bar{n}_1 + 1 + (k-1)\bar{n}_0} - \sum_{\ell = 1}^2 \bar{E}_{\ell, j}^{(1), L} g_{n^\varphi\bar{n}_1 + 1+ (k-1)\bar{n}_0}\\ &\stackrel{\text{DTA}}{=} \sum_{\ell =1}^2 \bar{E}_{\ell, j}^{(1), L} (g_{\ell + k\bar{n}_1} - g_{\ell + (k-1)\bar{n}_1} + g_{n^\varphi\bar{n}_1 + 1+ (k-1)\bar{n}_0} - g_{n^\varphi\bar{n}_1 + \ell + 1 + (k-1)\bar{n}_0})\\ &\stackrel{\bbol{D}^{(1)}}{=} -\sum_{\ell=1}^2 \bar{E}_{\ell, j}^{(1), L} h_{\ell + (k-1)\bar{n}_1} \stackrel{(\bbol{E}^{(2)})^T}{=} h_{1jk}^2 
\end{split}\\
\begin{split}
\bullet~h_{1jk}^3 \stackrel{[-\bar{\bbol{D}}^{(0, 1, 0)}~\bbol{D}^{(1, 0, 0)}~0]}{=} &g_{2jk}^2 - g_{1jk}^2 + g_{1jk}^1 - g_{1(j+1)k}^1 \stackrel{(\bbol{E}^{(1)})^T}{=} \sum_{\ell=1}^2 \left(\bar{E}_{\ell, n^\theta + j}^{(1), R} + \bar{E}_{\ell, j}^{(1), L} - \bar{E}_{\ell, j+1}^{(1), L}\right) g_{\ell + (k-1)\bar{n}_1} =\\ &=\sum_{\ell=1}^2 \bar{E}_{\ell, j}^{(1), R} g_{\ell + (k-1)\bar{n}_1} = 0 \stackrel{\bbol{D}^{(1)}}{=} 0 \stackrel{(\bbol{E}^{(2)})^T}{=} h_{1jk}^3.
\end{split}
\end{array}
\end{equation*}
Similarly, for $i = 2$ it holds
\begin{equation*}
\begin{array}{l}
\begin{split}
\bullet~h_{2jk}^1 \stackrel{[ 0~-\bbol{D}^{(0,0,1)}~\bbol{D}^{(0,1,0)}]}{=} &g_{2(j+1)k}^3 - g_{2jk}^3 + g_{2jk}^2 - g_{2j(k+1)}^2\\ &\stackrel{(\bbol{E}^{(1)})^T}{=} \sum_{\ell=1}^3 \left(\bar{E}_{\ell, n^\theta + j + 1} - \bar{E}_{\ell, n^\theta + j}\right) g_{n^\varphi\bar{n}_1 + \ell + (k-1)\bar{n}_0} + \sum_{\ell=1}^2 \bar{E}_{\ell, n^\theta + j}^{(1), R} (g_{\ell + (k-1)\bar{n}_1} - g_{\ell + k\bar{n}_1}) =\\
&= \sum_{\ell = 1}^2 \bar{E}_{\ell, n^\theta + j}^{(1), R} (g_{n^\varphi\bar{n}_1 + \ell + 1 + (k-1)\bar{n}_0} + g_{\ell + (k-1)\bar{n}_1} - g_{\ell + k\bar{n}_1}) +\\ &+\left(\bar{E}_{1, n^\theta + j + 1} - \bar{E}_{1, n^\theta + j}\right)g_{n^\varphi \bar{n}_1 + 1 + (k-1)\bar{n}_0}+\\&+ \sum_{\ell=1}^2 \bar{E}_{\ell, n^\theta + j}^{(1), R} g_{n^\varphi\bar{n}_1 + 1+(k-1)\bar{n}_0} - \sum_{\ell=1}^2 \bar{E}_{\ell, n^\theta + j}^{(1), R} g_{n^\varphi\bar{n}_1 + 1+(k-1)\bar{n}_0}\\ &\stackrel{\text{DTA}}{=} \sum_{\ell=1}^2 \bar{E}_{\ell, n^\theta + j}^{(1), R} (g_{n^\varphi\bar{n}_1 + \ell + 1 + (k-1)\bar{n}_0} + g_{\ell + (k-1)\bar{n}_1} - g_{\ell + k\bar{n}_1} - g_{n^\varphi\bar{n}_1 + 1+ (k-1)\bar{n}_0}) \\&\stackrel{\bbol{D}^{(1)}}{=} \sum_{\ell=1}^2 \bar{E}_{\ell, n^\theta + j}^{(1), R} h_{\ell + (k-1)\bar{n}_1} \stackrel{(\bbol{E}^{(2)})^T}{=} h_{2jk}^1,
\end{split}\\
\begin{split}
\bullet~h_{2jk}^2 \stackrel{[\bar{\bbol{D}}^{(0,0,1)}~0~-\bbol{D}^{(1, 0, 0)}]}{=} &g_{2j(k+1)}^1 - g_{2jk}^1 + g_{2jk}^3 - g_{3jk}^3 \\&\stackrel{(\bbol{E}^{(1)})^T}{=} g_{2+j+k\bar{n}_1} - g_{2+j+(k-1)\bar{n}_1} + \sum_{\ell=1}^3 \bar{E}_{\ell, n^\theta + j} g_{n^\varphi \bar{n}_1 + \ell + (k-1)\bar{n}_0} - g_{n^\varphi\bar{n}_1 + 3+j + (k-1)\bar{n}_0} \\ &\stackrel{\bbol{D}^{(1)}}{=} - h_{2+j+(k-1)\bar{n}_1} \stackrel{(\bbol{E}^{(2)})^T}{=} h_{2jk}^2,
\end{split}\\
\begin{split}
\bullet~h_{2jk}^3 \stackrel{[-\bar{\bbol{D}}^{(0, 1, 0)}~\bbol{D}^{(1, 0, 0)}~0]}{=} & g_{3jk}^2 - g_{2jk}^2 + g_{2jk}^1 - g_{2(j+1)k}^1\\ &\stackrel{(\bbol{E}^{(1)})^T}{=} g_{2 + (n^r - 2)n^\theta +j+ (k-1)\bar{n}_1} - \sum_{\ell=1}^2 \bar{E}_{\ell, n^\theta + j}^{(1), R} g_{\ell + (k-1)\bar{n}_1} + g_{2 + j+(k-1)\bar{n}_1} - g_{2 + j+1+(k-1)\bar{n}_1}\\ &\stackrel{\bbol{D}^{(1)}}{=} h_{n^\varphi\bar{n}_1 + j + (k-1)\bar{n}_2} \stackrel{(\bbol{E}^{(2)})^T}{=} h_{2jk}^3.
\end{split}
\end{array}
\end{equation*}
Finally, for $i > 3$ we get
\begin{equation*}
\begin{array}{l}
\begin{split}
\bullet~h_{ijk}^1 \stackrel{[ 0~-\bbol{D}^{(0,0,1)}~\bbol{D}^{(0,1,0)}]}{=} & g_{i(j+1)k}^3 - g_{ijk}^3 + g_{ijk}^2 - g_{ij(k+1)}^2\\ &\stackrel{(\bbol{E}^{(1)})^T}{=} g_{n^\varphi\bar{n}_1 + 3+j+1+(i-3)n^\theta + (k-1)\bar{n}_0} - g_{n^\varphi\bar{n}_1 + 3+j+(i-3)n^\theta + (k-1)\bar{n}_0}+\\ &+ g_{2+(n^r-2)n^\theta +j+(i-3)n^\theta + (k-1)\bar{n}_1} - g_{2+(n^r-2)n^\theta +j+(i-3)n^\theta + k\bar{n}_1}\\ &\stackrel{\bbol{D}^{(1)}}{=} h_{2 + (n^r -2)n^\theta + j+(i-3)n^\theta + (k-1)\bar{n}_1} \stackrel{(\bbol{E}^{(2)})^T}{=} h_{ijk}^1,
\end{split}\\
\begin{split}
\bullet~h_{ijk}^2 \stackrel{[\bar{\bbol{D}}^{(0,0,1)}~0~-\bbol{D}^{(1, 0, 0)}]}{=} & g_{ij(k+1)}^1 - g_{ijk}^1 + g_{ijk}^3 - g_{(i+1)jk}^3 \\ &\stackrel{(\bbol{E}^{(1)})^T}{=} g_{2+j+(i-3)n^\theta + k\bar{n}_1} - g_{2+j+(i-3)n^\theta + (k-1)n^\theta}+\\ &+ g_{n^\varphi\bar{n}_1 + 3 + j + (i-3)n^\theta + (k-1)\bar{n}_0} - g_{n^\varphi\bar{n}_1 + 3 + j + (i-2)n^\theta + (k-1)\bar{n}_0} \\ &\stackrel{\bbol{D}^{(1)}}{=} - h_{2 + j + (i-2)n^\theta + (k-1)\bar{n}_1} \stackrel{(\bbol{E}^{(2)})^T}{=} h_{ijk}^2,
\end{split}\\
\begin{split}
\bullet~h_{ijk}^3 \stackrel{[-\bar{\bbol{D}}^{(0, 1, 0)}~\bbol{D}^{(1, 0, 0)}~0]}{=} & g_{(i+1)jk}^2 - g_{ijk}^2 + g_{ijk}^1 - g_{i(j+1)k}^1 \\&\stackrel{(\bbol{E}^{(1)})^T}{=} g_{2 + (n^r-2)n^\theta + j + (i-2)n^\theta + (k-1)\bar{n}_1} - g_{2 + (n^r-2)n^\theta + j + (i-3)n^\theta + (k-1)\bar{n}_1}+\\ &+ g_{2 + j + (i-2)n^\theta + (k-1)\bar{n}_1} - g_{2 + j + 1 + (i-2)n^\theta + (k-1)\bar{n}_1}\\ &\stackrel{\bbol{D}^{(1)}}{=} h_{n^\varphi \bar{n}_1 + j + (i-2)n^\theta + (k-1)\bar{n}_2} \stackrel{(\bbol{E}^{(2)})^T}{=} h_{ijk}^3.
\end{split}
\end{array}
\end{equation*}
\end{proof}
Let now $\pmb{h} \in V^2$, $\pmb{h} = \m{N}^{(2)} \cdot \m{h}$. When applying $\bbol{D}^{(2)}$ to $\m{h}$, it produces a vector $\m{m} = \bbol{D}^{(2)}\m{h}$ with the entries $\{m_\ell\}_{\ell=1}^{n_3}$ given by
\begin{equation}\label{eq:divhpol}
\hspace{-1cm}\begin{minipage}{\textwidth}
\begin{algorithm}[H]
\For{$k=1, \ldots, n^\varphi$}{
\For{$j = 1, \ldots, n^\theta$}{
$m_{j+(k-1)\bar{n}_2} = h_{2 + (n^r - 2)n^\theta + j + (k - 1)\bar{n}_1} - \displaystyle\sum_{\ell = 1}^2 \bar{E}_{\ell, n^\theta + j}^{(1), R} h_{\ell + (k - 1)\bar{n}_1} +$\\ $\qquad\qquad\qquad + h_{2 + j + (k - 1)\bar{n}_1} - h_{2 + (j+1) + (k- 1)\bar{n}_1} + h_{n^\varphi \bar{n}_1 + j + k\bar{n}_2} - h_{n^\varphi \bar{n}_1 + j + (k-1)\bar{n}_2}$\;
}
\For{$i = 2, \ldots, n^r-2$}{
\For{$j=1, \ldots, n^\theta$}{
$m_{j+(i-1)n^\theta+(k-1)\bar{n}_2} = h_{2 + (n^r -2)n^\theta + j + (i-1)n^\theta + (k - 1)\bar{n}_1} - h_{2 + (n^r -2)n^\theta + j + (i-2)n^\theta + (k - 1)\bar{n}_1}+$\\
$\qquad\qquad\qquad\qquad + h_{2 + j + (i - 1)n^\theta + (k -1) \bar{n}_1} - h_{2 + (j + 1) + (i - 1)n^\theta + (k -1) \bar{n}_1}+$\\ $\qquad\qquad\qquad\qquad + h_{n^\varphi \bar{n}_1 + j + (i - 1)n^\theta + k\bar{n}_2} - h_{n^\varphi \bar{n}_1 + j + (i - 1)n^\theta + (k-1)\bar{n}_2}$\;
}}}
\end{algorithm}
\end{minipage}
\end{equation}
Each entry of $\m{m}$ is expressed as linear combination of six DOFs in $V_2$ associated to faces enclosing a volume in $\cR$. The only exceptions are those entries corresponding to volumes with a boundary face on the cylinders in the center of the sides of $\cR$, i.e., $m_{j+(k-1)\bar{n}_2}$ for $j=1, \ldots, n^\theta$ and $k=1, \ldots, n^\varphi$, for which we need a suitable linear combination of the two faces inside such cylinders, with coefficients taken from matrix $\bbol{E}_J^{(1), R}$ of Equation \eqref{eq:EJ1}.
Instead, matrix $\bbol{E}^{(3)}$ provides the relation between the DOFs $\{m_\ell\}_{\ell = 1}^{n_3}$ in the polar representation and the DOFs $\{m_{ijk}\}_{i,j,k=1}^{n^r-1, n^\theta, n^\varphi}$ in the tensor product space $\SSS^{p^r-1, p^\theta -1, p^\varphi - 1}$. More precisely, according to Equation \eqref{m} it holds:
\begin{equation}\label{eq:mpol}
\hspace{-1cm}\begin{minipage}{\textwidth}
\begin{algorithm}[H]
\For{$k = 1, \ldots, n^\varphi$}{
\For{$j = 1, \ldots, n^\theta$}{
$m_{1jk} = 0$\;
\For{$i = 2, \ldots, n^r-1$}{
$m_{ijk} = m_{j + (i-2)n^\theta + (k - 1)\bar{n}_2}$\;
}}}
\end{algorithm}
\end{minipage}
\end{equation}
We can finally prove Proposition \ref{commutation3}.
\begin{proof}
We use Equations \eqref{eq:divh}, \eqref{eq:hijk}, \eqref{eq:divhpol} and \eqref{eq:mpol} providing the relations between the DOFs involved in the diagram. The scope is to have a series of equality making a loop that is possible only if the commutation property holds. We split the cases $ i = 1, 2$ and $i \geq 3$, corresponding to the following three bullets:
\begin{equation*}
\begin{array}{l}
\begin{split}
\bullet~m_{1jk} \stackrel{\left[\bbol{D}^{(1, 0, 0)}~\bbol{D}^{(0, 1, 0)}~\bbol{D}^{(0, 0, 1)}\right]}{=}& h_{2jk}^1 - h_{1jk}^1 + h_{1(j+1)k}^2 - h_{1jk}^2 + h_{1j(k+1)}^3 - h_{1jk}^3 \\&\stackrel{(\bbol{E}^{(2)})^T}{=} \sum_{\ell=1}^2\left(\bar{E}_{\ell, n^\theta + j}^{(1), R} + \bar{E}_{\ell, j}^{(1), L} - \bar{E}_{\ell, j + 1}^{(1), L}\right)h_{\ell + (k-1)\bar{n}_1}\\ &= \sum_{\ell=1}^2 \bar{E}_{\ell, j}^{(1), R} h_{\ell + (k-1)\bar{n}_2} = 0 \stackrel{\bbol{D}^{(2)}}{=} 0 \stackrel{(\bbol{E}^{(3)})^T}{=} m_{1jk}
\end{split}\\
\begin{split}
\bullet~m_{2jk} \stackrel{\left[\bbol{D}^{(1, 0, 0)}~\bbol{D}^{(0, 1, 0)}~\bbol{D}^{(0, 0, 1)}\right]}{=}&
h_{3jk}^1 - h_{2jk}^1 + h_{2(j+1)k}^2 - h_{2jk}^2 + h_{2j(k+1)}^3 - h_{2jk}^3 \\&\stackrel{(\bbol{E}^{(2)})^T}{=} h_{2 + (n^r-2)n^\theta + j + (k-1)\bar{n}_1} - \sum_{\ell=1}^2 \bar{E}_{\ell, n^\theta + j}^{(1), R} h_{\ell + (k-1)\bar{n}_1}\\ &+ h_{2 + j + (k-1)\bar{n}_1} - h_{2 + (j+1) + (k-1)\bar{n}_1} + h_{n^\varphi \bar{n}_1 + j + k\bar{n}_2} - h_{n^\varphi \bar{n}_1 + j + (k-1)\bar{n}_2}\\ &\stackrel{\bbol{D}^{(2)}}{=} m_{j + (k-1)\bar{n}_2} \stackrel{(\bbol{E}^{(3)})^T}{=} m_{2jk}
\end{split}\\
\begin{split}
\bullet~m_{ijk} \stackrel{\left[\bbol{D}^{(1, 0, 0)}~\bbol{D}^{(0, 1, 0)}~\bbol{D}^{(0, 0, 1)}\right]}{=}&
h_{(i+1)jk}^1 - h_{ijk}^1 + h_{i(j+1)k}^2 - h_{ijk}^2 + h_{ij(k+1)}^3 - h_{ijk}^3 \\&\stackrel{(\bbol{E}^{(2)})^T}{=} h_{2 + (n^r -2)n^\theta + j + (i-2)n^\theta + (k-1)n^\theta} - h_{2 + (n^r -2)n^\theta + j + (i-3)n^\theta + (k-1)n^\theta}+\\&\qquad\qquad+h_{2 + j + (i-3)n^\theta + (k-1)\bar{n}_1} - h_{2 + (j+1) + (i-3)n^\theta + (k-1)\bar{n}_1}+\\&\qquad\qquad+ h_{n^\varphi\bar{n}_1 + j + (i-2)n^\theta + k\bar{n}_2} - h_{n^\varphi\bar{n}_1 + j + (i-2)n^\theta + (k-1)\bar{n}_2}\\&\stackrel{\bbol{D}^{(2)}}{=}m_{j + (i-2)n^\theta + (k-1)\bar{n}_2} \stackrel{(\bbol{E}^{(3)})^T}{=} m_{ijk}.
\end{split}
\end{array}
\end{equation*}
\end{proof}
\end{appendices}
\bibliography{biblio}

\providecommand{\bysame}{\leavevmode\hbox to3em{\hrulefill}\thinspace}
\providecommand{\MR}{\relax\ifhmode\unskip\space\fi MR }
\providecommand{\MRhref}[2]{%
  \href{http://www.ams.org/mathscinet-getitem?mr=#1}{#2}
}
\providecommand{\href}[2]{#2}
\begin{thebibliography}{10}

\bibitem{FEECbook}
D.~N. Arnold, \emph{Finite element exterior calculus}, CBMS-NSF Regional Conference Series in Applied Mathematics, vol.~93, Society for Industrial and Applied Mathematics (SIAM), Philadelphia, PA, 2018. \MR{3908678}

\bibitem{Arnold1}
D.~N. Arnold, R.~S. Falk, and R.~Winther, \emph{Finite element exterior calculus, homological techniques, and applications}, Acta Numerica \textbf{15} (2006), 1–155.

\bibitem{Arnold2}
\bysame, \emph{Finite element exterior calculus: from hodge theory to numerical stability}, Bulletin of the American Mathematical Society \textbf{47} (2010), no.~2, 281--354.

\bibitem{back}
A.~Back and E.~Sonnendr{\"u}cker, \emph{Finite {E}lement {H}odge for spline discrete differential forms. {A}pplication to the {V}lasov--{P}oisson system}, Applied Numerical Mathematics \textbf{79} (2014), 124--136.

\bibitem{beirao2013}
L.~Beir\~ao~da Veiga, A.~Buffa, G.~Sangalli, and R.~V\'azquez, \emph{Analysis-suitable {T}-splines of arbitrary degree: Definition, linear independence and approximation properties}, Mathematical Models and Methods in Applied Sciences \textbf{23} (2013), 1979--2003.

\bibitem{Boffi}
D.~Boffi, \emph{Finite element approximation of eigenvalue problems}, Acta Numerica \textbf{19} (2010), 1--120.

\bibitem{deboor}
C.~de Boor, \emph{A practical guide to splines}, revised ed., Springer-Verlag, New York, 2001.

\bibitem{espen}
A.~Bressan and E.~Sande, \emph{Approximation in {FEM}, {DG} and {IGA}: a theoretical comparison}, Numerische Mathematik \textbf{143} (2019), no.~4, 923--942. \MR{4026376}

\bibitem{Buffa2}
A.~Buffa, J.~Rivas, G.~Sangalli, and R.~V{\'a}zquez, \emph{Isogeometric discrete differential forms in three dimensions}, SIAM Journal on Numerical Analysis \textbf{49} (2011), no.~2, 818--844.

\bibitem{Buffa1}
A.~Buffa, G.~Sangalli, and R.~V{\'a}zquez, \emph{Isogeometric analysis in electromagnetics: {B}-splines approximation}, Computer Methods in Applied Mechanics and Engineering \textbf{199} (2010), no.~17-20, 1143--1152.

\bibitem{Buffa3}
\bysame, \emph{Isogeometric methods for computational electromagnetics: {B}-spline and {T}-spline discretizations}, Journal of Computational Physics \textbf{257} (2014), 1291--1320.

\bibitem{pinto}
M.~Campos~Pinto, K.~Kormann, and E.~Sonnendr{\"u}cker, \emph{Variational {F}ramework for {S}tructure-{P}reserving {E}lectromagnetic {P}article-{I}n-{C}ell {M}ethods}, arXiv preprint: https://arxiv.org/abs/2101.09247 (2021).

\bibitem{PHTsplines}
J.~Deng, F.~Chen, X.~Li, C.~Hu, W.~Tong, Z.~Yang, and Y.~Feng, \emph{Polynomial splines over hierarchical {T}-meshes}, Graphical Models \textbf{70} (2008), 76--86.

\bibitem{paperLR}
T.~Dokken, T.~Lyche, and K.~F. Pettersen, \emph{Polynomial splines over locally refined box-partitions}, Computer Aided Geometric Design \textbf{30} (2013), 331--356.

\bibitem{PBsplines}
N.~Engleitner and B.~J{\"u}ttler, \emph{Patchwork {B}-spline refinement}, Computer-Aided Design \textbf{90} (2017), 168--179.

\bibitem{evans}
J.~A. Evans, M.~A. Scott, K.~M. Shepherd, D.~C. Thomas, and R.~V{\'a}zquez~Hern{\'a}ndez, \emph{Hierarchical {B}-spline complexes of discrete differential forms}, IMA Journal of Numerical Analysis \textbf{40} (2020), no.~1, 422--473.

\bibitem{HBsplines}
D.~R. Forsey and R.~H. Bartels, \emph{Hierarchical {B}-spline refinement}, ACM Siggraph Computer Graphics \textbf{22} (1988), 205--212.

\bibitem{freidberg}
J.~P. Freidberg, \emph{{P}lasma {P}hysics and {F}usion {E}nergy}, Cambridge University Press, 2008.

\bibitem{THBsplines}
C.~Giannelli, B.~J{\"u}ttler, and H.~Speleers, \emph{{THB}-splines: The truncated basis for hierarchical splines}, Computer Aided Geometric Design \textbf{29} (2012), 485--498.

\bibitem{grossmann}
D.~Gro{\ss}mann, B.~J{\"u}ttler, H.~Schlusnus, J.~Barner, and A.~Vuong, \emph{Isogeometric simulation of turbine blades for aircraft engines}, Computer Aided Geometric Design \textbf{29} (2012), no.~7, 519--531.

\bibitem{psydac}
Y.~G{\"u}{\c{c}}l{\"u}, S.~A. Hadjout, and A.~Ratnani, \emph{{PSYDAC}: a high-performance {IGA} library in {Python}}, ECCOMAS Congress 2022 - 8th European Congress on Computational Methods in Applied Sciences and Engineering, Computational Applied Mathematics, Scipedia.com.

\bibitem{hatcher}
A.~Hatcher, \emph{Algebraic {T}opology}, Cambridge University Press, 2001.

\bibitem{hazeltine}
R.~D. Hazeltine and J.~D. Meiss, \emph{Plasma {C}onfinement}, Courier Corporation, 2003.

\bibitem{Hiptmair}
R.~Hiptmair, \emph{Finite elements in computational electromagnetism}, Acta Numerica \textbf{11} (2002), 237–339.

\bibitem{possanner}
F.~Holderied and S.~Possanner, \emph{Magneto-hydrodynamic eigenvalue solver for axisymmetric equilibria based on smooth polar splines}, J. Comput. Phys. \textbf{464} (2022), Paper No. 111329, 34. \MR{4432263}

\bibitem{holderied}
F.~Holderied, S.~Possanner, A.~Ratnani, and X.~Wang, \emph{Structure-preserving vs. standard particle-in-cell methods: the case of an electron hybrid model}, Journal of Computational Physics \textbf{402} (2020), 109108.

\bibitem{IgA}
T.~J.~R. Hughes, J.~A. Cottrell, and Y.~Bazilevs, \emph{Isogeometric analysis: {CAD}, finite elements, {NURBS}, exact geometry and mesh refinement}, Computer Methods in Applied Mechanics and Engineering \textbf{194} (2005), 4135--4195.

\bibitem{johannessen}
K.~A. Johannessen, M.~Kumar, and T.~Kvamsdal, \emph{Divergence-conforming discretization for {S}tokes problem on locally refined meshes using {LR} {B}-splines}, Computer Methods in Applied Mechanics and Engineering \textbf{293} (2015), 38--70.

\bibitem{gempic}
M.~Kraus, K.~Kormann, P.~J. Morrison, and E.~Sonnendr{\"u}cker, \emph{{GEMPIC}: geometric electromagnetic particle-in-cell methods}, Journal of Plasma Physics \textbf{83} (2017), no.~4.

\bibitem{lychemanni}
T.~Lyche, C.~Manni, and H.~Speleers, \emph{Foundations of spline theory: {B}-splines, spline approximation, and hierarchical refinement}, Splines and PDEs: From Approximation Theory to Numerical Linear Algebra (T.~Lyche et~al., eds.), Lecture Notes in Mathematics, vol. 2219, Springer International Publishing AG, 2018, pp.~1--76.

\bibitem{oscillation}
C.~Manni, F.~Pelosi, and M.~L. Sampoli, \emph{Isogeometric analysis in advection--diffusion problems: {T}ension splines approximation}, Journal of Computational and Applied Mathematics \textbf{236} (2011), no.~4, 511--528.

\bibitem{cetraro}
C.~Manni and H.~Speleers, \emph{Standard and non-standard {CAGD} tools for isogeometric analysis: {A} tutorial}, IsoGeometric Analysis: A New Paradigm in the Numerical Approximation of PDEs (A.~Buffa and G.~Sangalli, eds.), Lecture Notes in Mathematics, vol. 2161, Springer International Publishing AG, 2016, pp.~1--69.

\bibitem{struphy}
S.~Possanner, F.~Holderied, Y.~Li, B.~K. Na, D.~Bell, S.~Hadjout, and Y.~G\"u\c~cl\"u, \emph{High-order structure-preserving algorithms for plasma hybrid models}, Geometric science of information. {P}art {II}, Lecture Notes in Comput. Sci., vol. 14072, Springer, Cham, [2023] \copyright 2023, pp.~263--271. \MR{4656135}

\bibitem{schumaker}
L.~L. Schumaker, \emph{Spline functions: Basic theory}, third ed., Cambridge University Press, 2007.

\bibitem{Tsplines}
T.~W. Sederberg, J.~Zheng, A.~Bakenov, and A.~Nasri, \emph{{T}-splines and {T}-{NURCC}s}, ACM Transactions on Graphics \textbf{22} (2003), 477--484.

\bibitem{thbderham}
K.~Shepherd and D.~Toshniwal, \emph{{L}ocally-{V}erifiable {S}ufficient {C}onditions for {E}xactness of the {H}ierarchical {B}-spline {D}iscrete de {R}ham {C}omplex in $\mathbb{R}^n$}, Foundations of Computational Mathematics (2024), 1--43.

\bibitem{MDB3}
H.~Speleers, \emph{Algorithm 999: Computation of multi-degree {B}-splines}, ACM Transactions on Mathematical Software (TOMS) \textbf{45} (2019), no.~4, 1--15.

\bibitem{polarC1}
H.~Speleers and D.~Toshniwal, \emph{A general class of {C}1 smooth rational splines: {A}pplication to construction of exact ellipses and ellipsoids}, Computer-Aided Design \textbf{132} (2021), 102982.

\bibitem{deepesh}
D.~Toshniwal and T.~J.R. Hughes, \emph{Isogeometric discrete differential forms: {N}on-uniform degrees, {B}{\'e}zier extraction, polar splines and flows on surfaces}, Computer Methods in Applied Mechanics and Engineering \textbf{376} (2021), 113576.

\bibitem{MDB1}
D.~Toshniwal, H.~Speleers, R.~R. Hiemstra, and T.~J.~R. Hughes, \emph{Multi-degree smooth polar splines: {A} framework for geometric modeling and isogeometric analysis}, Computer Methods in Applied Mechanics and Engineering \textbf{316} (2017), 1005--1061.

\bibitem{MDB2}
D.~Toshniwal, H.~Speleers, R.~R. Hiemstra, C.~Manni, and T.~J.~R. Hughes, \emph{Multi-degree {B}-splines: {A}lgorithmic computation and properties}, Computer Aided Geometric Design \textbf{76} (2020), 101792.

\bibitem{tu}
L.W. Tu, \emph{An {I}ntroduction to {M}anifolds}, Springer, New York, NY, 2011.

\bibitem{yaman}
E.~Zoni and Y.~G{\"u}{\c{c}}l{\"u}, \emph{Solving hyperbolic-elliptic problems on singular mapped disk-like domains with the method of characteristics and spline finite elements}, Journal of Computational Physics \textbf{398} (2019), 108889.

\end{thebibliography}
\end{document}